\numberwithin{equation}{section}
\newtheorem{theorem}{Theorem}[section]
\newtheorem{proposition}[theorem]{Proposition}
\newtheorem{lemma}[theorem]{Lemma}
\newtheorem{corollary}[theorem]{Corollary}
\newtheorem{remark}[theorem]{Remark}
\newcommand{\rad}{{\text{\upshape rad}}}
\newcommand{\loc}{{\text{\upshape loc}}}
\def\e{{\varepsilon}}
\def\L{{\Lambda}}
\def\l{{\lambda}}
\def\a{{\alpha}}
\def\de{\partial}
\newcommand{\R}{\mathbb{R}}
\definecolor{darkgreen}{rgb}{0.0, 0.5, 0.2} 
\definecolor{purple}{rgb}{0.5, 0.0, 0.5}
\newcommand{\AL}{\color{purple}}
\newcommand{\n}{\mathop{N}}
\providecommand{\norm}[1]{\lVert#1\rVert}
\newcommand{\remove}[1]{}
\def\sideremark#1{\ifvmode\leavevmode\fi\vadjust{\vbox to0pt{\vss
 \hbox to 0pt{\hskip\hsize\hskip1em
 \vbox{\hsize2.1cm\tiny\raggedright\pretolerance10000
  \noindent #1\hfill}\hss}\vbox to15pt{\vfil}\vss}}}%
\newtheorem*{theorem*}{Theorem}
\begin{document}

\title[Singular eigenvalue problem] {On a singular eigenvalue problem and its applications in computing the Morse index of solutions to semilinear PDE's}
\author[A.~L.~Amadori, F.~Gladiali]{Anna Lisa Amadori$^\dag$,  Francesca Gladiali$^\ddag$}
\thanks{\small{This work was supported by Gruppo Nazionale per l'Analisi Matematica, la Probabilit\`a e le loro Applicazioni (GNAMPA) of the Istituto Nazionale di Alta Matematica (INdAM). {The second author is supported by Prin-2015KB9WPT}}}
\date{\today}
\address{\small{$\dag$ Dipartimento di Scienze e Tecnologie, Universit\`a di Napoli ``Parthenope", Centro Direzionale di Napoli, Isola C4, 80143 Napoli, Italy. \texttt{annalisa.amadori@uniparthenope.it}}}
\address{\small{$\ddag$ Dipartimento di Chimica e Farmacia, Universit\`a di Sassari, via Piandanna 4, 07100 Sassari, Italy. \texttt{fgladiali@uniss.it}}}

\begin{abstract}
	We investigate nodal radial solutions to semilinear problems of type
	\[
	\left\{\begin{array}{ll}
	-\Delta u = f(|x|,u) \qquad & \text{ in } \Omega, \\
	u= 0 & \text{ on } \partial \Omega,
	\end{array} \right.
	\]
	where $\Omega$ is a bounded radially symmetric domain of $\R^N$ ($N\ge 2$)
	and $f$ is a real function.
	We characterize both the Morse index and the degeneracy in terms of a singular one dimensional eigenvalue problem, which is studied in full detail.  
	The presented approach also describes the symmetries of the eigenfunctions.
This characterization enables to give a lower bound for the Morse index in a forthcoming work. 
\end{abstract}

\maketitle

{\bf Keywords:} semilinear elliptic equations; Morse index and degeneracy; singular ODE.

{\bf AMS Subject Classifications:} 35J91, 35B05, 34B16.


\section{Introduction}
This paper is aimed to furnish a tool to compute the Morse index and investigate degeneracy of solutions to semilinear elliptic problems 
\begin{equation} \label{general-f}
\left\{\begin{array}{ll}
-\Delta u = f(|x|,u) \qquad & \text{ in } \Omega, \\
u= 0 & \text{ on } \partial \Omega,
\end{array} \right.
\end{equation} 
where $\Omega$ is a bounded radially symmetric domain of $\R^N$, with $N\ge 2$ and $f$ is a real function.
\\
This issue has various interesting applications. Let us recall that the degeneracy points, associated with a change in the Morse index, give existence of other solutions that can be obtained by bifurcation and can give rise to the so called symmetry breaking phenomenon.
In the variational setting, indeed, there is a direct link between the second derivative of the 
functional associated to \eqref{general-f} and the quadratic form related to its linearization, and a change in the Morse index immediately produces a change in the critical groups, giving existence of bifurcating solutions; we refer to \cite{BSW} for the definition of critical groups, and their relation with the Morse index.
But also when the problem does not have a variational structure, as for instance when $f$ is supercritical, a change in the Morse index implies a bifurcation result, via the Leray Schauder degree (see \cite{AM}).  
 An application of this type can be found in  \cite{AG-henon}, dealing with positive solutions of the H\'enon problem.
\\
An estimate of the Morse index of radial nodal solutions has been used in \cite{AP}, for autonomous nonlinearities, to prove that any least energy nodal solution is not radial, thus producing a multiplicity result.
Moreover the knowledge of the Morse index in symmetric spaces turns useful in the case of multiple bifurcation to distinguish solutions as can be seen in \cite{GGPS} and \cite{AG17} dealing respectively with positive and nodal solutions for the Lane Emden problem in an annulus. It also allows to produce nonradial solutions by minimization, as done in \cite{GI} for the Lane-Emden problem in the disk, and extended to the H\'enon problem  in  \cite{AG18-2}, \cite{A}, by taking advantage of the characterization by singular eigenvalues presented here.
Specularly, the nondegeneracy of solutions allows to prove existence results in perturbed domains also in the supercritical setting, as for instance in \cite{C}, \cite{AGG}  and \cite{AG18}.

Our purpose is to characterize Morse index and degeneracy of a solution $u$ by means of a singular eigenvalue problem associated to the linearization of \eqref{general-f}.
The disadvantage of dealing with a singular problem is  rewarded by the fact that, when $u$ is radial, the singular eigenfunctions easily project along the spherical harmonics, thus reducing the issue to a (singular) Sturm-Liouville problem. The presented approach discloses information on the symmetries of the  eigenfunctions and hence on the Morse index and degeneracy  also in spaces of  symmetric functions. In this first paper we extend the main properties of standard eigenvalue problems to the singular ones arising in this framework, and obtain an easier characterization of the Morse index. In a second paper \cite{AG-sing-2} we use this characterization, together with the properties of singular eigenvalues established here,  to give a general lower bound for the Morse index of radial solutions to some problems of H\'enon type (i.e. whose nonlinearity has the form $f(|x|, u)= |x|^{\a} f(u)$).

\

In this paper we are not interested in existence results for \eqref{general-f} nevertheless we remind that radial and radial nodal solutions have been found in \cite{Bartsch-Weth}, \cite{BW93}, \cite{CastroCossioNeuberger},  \cite{BCW} and \cite{Struwe} under some assumptions on $f$. To give an idea we report the ones in \cite{Bartsch-Weth}:
\begin{itemize}
		\item[$(f_1)$] $f\in C^1(\Omega\times \R,\R) \ , \ f(x,0)=0$ for all $x\in \Omega$;
		\item[$(f_2)$] there exists $  p \in \big(2, \frac {2N}{N-2}\big]$ (or  $p\in (2,\infty)$ in case $N=2$)  such that $|f_u(x,t)|\leq C(1+|t|^{p-2})$  for all $x\in \Omega, \ t\in \R$;
		\item[$(f_3)$] $f_u(x,t)>f(x,t)/t \ \text{ for all }x\in \Omega, \ t\neq 0$;
		\item[$(f_4)$] there exists $R>0 \text{ and }\theta>2 \text{ such that }0<\theta \int_0^t f(x,s)\, ds \leq tf(x,t) \text{ for all }x\in \Omega, \ |t|\geq R$.
	\end{itemize}
	Here we want to include  also  some problems of H\'enon type,  for which the existence of radial solutions is established under less restrictive growth conditions than $(f_2)$. 
Thus we shall take as underlying assumption that 
\begin{enumerate}[{A.}1]
\item $f(r,u):[0,\infty)\times \R \to \R$  is continuous w.r.t.~$r$ and for all $r\ge 0$ the function $t\mapsto f(r,t)$ belongs to $W^{1,1}_{\loc}(\R)$,
\end{enumerate}
and consider weak solutions $u\in H^1_0(\Omega)$ to \eqref{general-f} such that 
\begin{enumerate}[{A.}2]
\item  $f_u(|x|,u(x))\in L^{\infty}(\Omega)$.
\end{enumerate}
 where  by  weak solution we mean a function $u\in H^1_0(\Omega)$ s.t.
	\[\int_\Omega \left(\nabla u\nabla \varphi  -f(|x|,u)\varphi \right) dx=0\]
	for any test function $\varphi\in H^1_0(\Omega)$.
It is easily seen that hypothesis  {A.2} is satisfied by classical solutions, under assumption $(f_1)$, as well as by weak solutions, under assumptions $(f_1)$--$(f_4)$, thanks to elliptic regularity theory.
Assumptions {A.1} and {A.2} are needed respectively to give a sense and recover compactness of the linearized operator  associated to \eqref{general-f}
\begin{align*}
L_u(\psi)&:=-\Delta \psi-f_u(|x|, u)\psi .
\end{align*}
for $\psi\in H^1_0(\Omega)$. A
 weak solution $u$ to \eqref{general-f} is said degenerate if the linearized equation $L_u(w)$ admits a nontrivial weak solution $w\in H^1_0(\Omega)$, equivalently if the linearized operator $L_u$ admits $0$ as an eigenvalue in $H^1_0(\Omega)$. The Morse index of $u$ 
is instead the maximal dimension of a subspace of $H^1_0(\Omega)$ in which the quadratic form
\[
{\mathcal Q}_u (w):=\int_{\Omega}\left(|\nabla w|^2 - f_u(|x|,u(x))\,w^2\right) dx.
\]
is negative defined, or equivalently, since  $L_u$ is a linear compact operator, is the number of the negative eigenvalues of  $L_u$ in $H^1_0(\Omega)$, counted with multiplicity.
When $u$ is radial, the linearized operator $L_u$ and the quadratic form $\mathcal Q_u$ can be regarded in some subspace $H^1_{0,\mathcal G}(\Omega)$ of $H^1_0(\Omega)$ given by functions which are invariant by the action of a subgroup $\mathcal G$ of the orthogonal group $O(N)$. Then we can say that $u$ is $\mathcal{G}$-degenerate if $L_u$ admits 0 as an eigenvalue in $H^1_{0,\mathcal G}(\Omega)$ and the $\mathcal{G}$-Morse index of $u$ is the number, counted with multiplicity, of the negative eigenvalues of $L$ in $H^1_{0,\mathcal G}(\Omega)$. 
In particular when $\mathcal G =O(N)$ we say that $u$ is radially degenerate if $L_u$ admits 0 as an eigenvalue in $H^1_{0, \rad}(\Omega)$ (the
subspace of $H^1_0 (\Omega)$ given by radial functions) and the radial Morse index of $u$ is the
number of the negative eigenvalues of $L_u$ in $H^1_{0, \rad}(\Omega)$.
\\
Then both of our aims, Morse index and degeneracy,
are described in terms of the eigenvalues of a linear compact operator.
In this perspective in Section \ref{se-prel} we recall the main properties of standard eigenvalues and eigenfunctions associated to a compact operator of type
\begin{align*}
L_a w :=-\Delta w-a(x)w ,\quad
\intertext{and the related quadratic form}
\label{forma-quadratica-a}
Q_a(w) :=\int_{\Omega} \left(|\nabla w|^2-a(x)w^2\right) dx, \quad
\end{align*}
where $a(x)$ is a generic function, $a\in L^{\infty}(\Omega)$,
and we address a very classical linear eigenvalues problem: 
\begin{equation}\label{Rayleigh} \begin{split}
\L_1 & :=\min\left\{ \frac{Q_a(w)}{\int_{\Omega} w^2(x)\, dx}  : \, w\in  H^1_0({\Omega} )\setminus\{0\}\right\} ,  \\
\L_i & :=\min\left\{ \frac{Q_a(w)}{\int_{\Omega} w^2(x)\, dx}  : \, w\in  H^1_0({\Omega} )\setminus\{0\}, \,  w\perp \psi_1,\dots,\psi_{i-1} \right\} \quad   \text{ as } i\ge 2,
\end{split}\end{equation}
where the orthogonality condition $w\perp \psi_j$ stands for the orthogonality in $L^2({\Omega} )$. 
\\
When the function $a$ is radial, we also look at its radial version
\begin{equation}\label{Rayleigh-rad} \begin{split}
\L_1^{\rad}&  :=\min\left\{ \frac{\mathcal Q_a(w)}{\int_{\Omega}  w^2(x)\, dx}  : \, w\in  H^1_{0,\rad}({\Omega} )\setminus\{0\} \right\} ,  \\
\L_i^{\rad} & :=\min\left\{ \frac{\mathcal Q_a(w)}{\int_{\Omega}  w^2(x)\, dx}  : \, w\in  H^1_{0,\rad}({\Omega} )\setminus\{0\}, \,  w\perp \psi_1^{\rad},\dots,\psi_{i-1}^{\rad} \right\} \quad   \text{ as } i\ge 2 ,
\end{split}\end{equation}
which is related to the Sturm-Liouville problem 
\begin{equation}\label{radial-eigenvalue-problem}
\left\{\begin{array}{ll}
-\left(r^{N-1}\left(\psi_i^{\rad}\right)'\right)' -r^{N-1} a(r) \psi_i^{\rad} =r^{N-1} \L_i^{\rad}\psi_i^{\rad} & \text{ for } r\in(0,1)\\
{ (\psi_i^{\rad})'(0)=0 , \; \psi_i^{\rad}(1)=0 , } 
\end{array} \right.
\end{equation}
if $\Omega$ is the unit ball centered at the origin, or
\begin{equation}\label{radial-eigenvalue-problem-annulus}
\left\{\begin{array}{ll}
-\left(r^{N-1}\left(\psi_i^{\rad}\right)'\right)' -r^{N-1} a(r) \psi_i^{\rad} =r^{N-1} \L_i^{\rad}\psi_i^{\rad} & \text{ for } r\in(a,b)\\
\psi_i^{\rad}(a)= \psi_i^{\rad}(b) = 0 , 
\end{array} \right.
\end{equation}
if $\Omega$ is an annulus centered at the origin. 

In  Section \ref{se-prel} we recall the main properties of standard eigenvalues and eigenfunctions associated to a compact operator, \eqref{Rayleigh} and \eqref{Rayleigh-rad}. 
The points concerning radial eigenfunctions are addressed in the framework of weak solutions to the Sturm-Liouville problem \eqref{radial-eigenvalue-problem} in view of the subsequent extension to the singular setting. 
Next in Section \ref{se:singular} we introduce a singular eigenvalue problem associated to $L_a$, which requires an ad hoc  choice of the functional setting.
Letting $\mathcal{L}:=\{v:\Omega\to \R\ : \ v/|x| \in L^2(\Omega) \}$,  we define 
$\mathcal {H}_{0}:=H^1_0(\Omega)\cap \mathcal{L}$ and $\mathcal {H}_{0,\rad}:=H^1_{0, \rad}(\Omega)\cap \mathcal{L}$ and 
\begin{equation}\label{i+1-singular}\begin{split}
\widehat \L_{1}& := \inf \left\{ \frac{\mathcal Q_a(w)}{\int_{\Omega} |x|^{-2}w^2(x)\, dx}  : \, w\in  {\mathcal H}_0\setminus\{0\}\right\}, \\
\widehat \L_{i}& := \inf \left\{ \frac{\mathcal Q_a(w)}{\int_{\Omega} |x|^{-2}w^2(x)\, dx}  : \, w\in  {\mathcal H}_0\setminus\{0\}, \,  w\underline{\perp} \widehat \psi_1,\dots,\widehat \psi_{i-1} \right\} \quad \text{ as } i\ge 2,
\end{split}\end{equation}
where the orthogonality stands for the orthogonality in $\mathcal L$ (see Section \ref{se:singular}),
and its radial version
\begin{equation}\label{radial-singular}\begin{split}
\widehat \L_{1}^{\rad}& :=\inf \left\{ \frac{\mathcal Q_a(w)}{\int_{\Omega} |x|^{-2}w^2(x)\, dx}  : \, w\in  {\mathcal H}_{0,\rad}\setminus\{0\} \right\}, \\
\widehat \L_{i}^{\rad}& :=\inf \left\{ \frac{\mathcal Q_a(w)}{\int_{\Omega} |x|^{-2}w^2(x)\, dx}  : \, w\in  {\mathcal H}_{0,\rad}\setminus\{0\}, \,  w\underline{\perp} \widehat \psi_1^{\rad},\dots,\widehat \psi_{i-1}^{\rad} \right\} \quad  \text{ as } i\ge 2 .
\end{split}\end{equation}
This last minimization problem is related to a Sturm-Liouville problem which is singular when $\Omega=B$ is the unit ball centered at the origin
	\begin{equation}\label{radial-singular-problem}
	\left\{\begin{array}{ll}
	-\left(r^{N-1}(\psi_i^{\rad})'\right)'- r^{N-1}a(r) \psi_i^{\rad}
	= r^{N-3}\widehat \L_i^{\rad} \psi_i^{\rad} & \text{ for } r\in (0,1)\\
 \psi _i^{\rad}\in \mathcal{H}_{0,\rad} .
	\end{array} \right.
	\end{equation} 
{When $\Omega$ is an annulus the problems  \eqref{i+1-singular}, \eqref{radial-singular} and \eqref{radial-singular-problem} are regular and equivalent to the eigenvalue problems studied in Section \ref{se-prel}. Indeed}  it is easy to see that there is a strict correspondence between nonpositive eigenvalues of  \eqref{Rayleigh} and of \eqref{i+1-singular} (and similarly for their symmetric and radial versions). See, for instance, \cite{AGG}, where this fact has been exploited to investigate the Lane-Emden problem in annular domains.
For these reasons in all the following we shall only consider $\Omega$ to be the unitary ball which is the main interesting case.

The singular problems \eqref{i+1-singular}, \eqref{radial-singular}  have been studied before in \cite{GGN2} for $N=2$ and \cite{DGG} when $N\ge 3$  where the eigenvalues $\widehat \L_i$ and $\widehat \L_i^\rad$ have been characterized, when they are negative, despite a lack of compactness. Here
we present a unified and improved study, providing among other things a sharp condition under which the minima in \eqref{i+1-singular}, \eqref{radial-singular} are attained: indeed compactness can be restored  as far as $\widehat\Lambda_i , \widehat{\Lambda}_i^{\rad}<\left(\frac{N-2}{2}\right)^2$. 
Furthermore we give a detailed description of the behaviour of the eigenfunctions close to the singular point $x=0$. 
Next the singular eigenvalues and eigenfunctions enjoy the basic properties of the regular ones: in particular the radial singular eigenvalues are simple, the eigenfunction related to the $i^{th}$ eigenvalues has exactly $i$ nodal domains, and Picone  identity holds (see Properties 1--5 in Subsection \ref{sse:general}).
In view of the applications to H\'enon type problems  we read \eqref{radial-eigenvalue-problem}, \eqref{radial-singular-problem} in  a broader sense, that does not require $N$ to be an integer, precisely we deal with the Sturm-Liouville problems 
	\begin{equation}\label{radial-eigenvalue-problem-H}
\left\{\begin{array}{ll}
- \left(t^{M-1} \phi'\right)'- t^{M-1} a(t)\, \phi = t^{M-1} {\nu} \, \phi & \text{ for } t\in(0,1)\\
\phi'(0)=0 , \ \phi(1)=0  , &
\end{array} \right.
\end{equation} 
	\begin{equation}\label{radial-singular-problem-H}
	\left\{\begin{array}{ll}
	- \left(t^{M-1} {\phi}'\right)'- t^{M-1} a(t) \,  \phi = t^{M-3} \widehat{\nu}  \,  \phi & \text{ for } t\in(0,1)\\
	\phi\in  \mathcal H_{0,M} , &
	\end{array} \right.
	\end{equation} 
	for $M\in\R$, $M\ge 2$.
	Here $\mathcal H_{0,M}$ is the right space where to work with solutions to \eqref{radial-singular-problem-H}
	and it is made of measurable functions $v:(0,1)\to \R$ such that $\int_0^1r^{M-3}v^2dr <\infty$, such that $v$ has a first order weak derivative $v'$ that satisfies $\int_0^1 r^{M-1} (v')^2dr<\infty$. Definition and properties of $\mathcal H_{0,M}$ are given in Subsection \ref{sse:general} and in the Appendix together with a variational definition of the eigenvalues $\widehat\nu_i$. The analogous spaces where to consider solutions to \eqref{radial-eigenvalue-problem-H} are given in Subsection \ref{sse:prel-radial}. 

Afterwards in Section \ref{sse:2-bis} we turn back to the semilinear equation \eqref{general-f} and see that 
\begin{proposition}[Alternative definition of Morse index]\label{prop-1-2bis}       
	Assume {A.1}	and take $u$ a weak  solution to \eqref{general-f} satisfying {A.2}.          The Morse index of $u$ is given by the number, counted with multiplicity of negative singular eigenvalues $\widehat \L$ defined in \eqref{i+1-singular} for $a(x) = f_u (|x|, u(x))$. \par		In the same way, if ${\mathcal G}$ is any subgroup of the orthogonal group $O(N)$ and $u$ is ${\mathcal G}$-invariant, its ${\mathcal G}$-Morse index is given by the number of negative singular radial eigenvalues  
	$\widehat \L$ which have  ${\mathcal G}$-invariant eigenfunctions. \par In particular if $u$ is radial, then its radial Morse index is given by the number of negative singular radial eigenvalues $\widehat \L^{\rad}$ defined in \eqref{radial-singular} for $a(x) = f_u (|x|, u(x))$. 
\end{proposition}
This, together with a decomposition result  in Section \ref{sse:2-bis}, brings to explicit formulas to compute the Morse index and evaluate the degeneracy of radial solutions in terms of the singular Sturm-Liouville problem.
To give a precise statement we let $\l_j:=j(N+j-2)$ be the eigenvalues of the Laplace-Beltrami operator in the sphere of dimension $N-1$,  $N_j=\frac{(N+2j-2)(N+j-3)!}{(N-2)!j!}$ their multiplicity, and  $Y_j$ their eigenfunctions. Moreover we write $\L^{\rad}_i$ and  $\widehat\L^{\rad}_i$ meaning the radial eigenvalues in \eqref{Rayleigh-rad} and \eqref{radial-singular} with $a(r)= f_u(r, u(r))$.

\begin{theorem}[Morse index formula]\label{general-morse-formula} 
	Assume {A.1} and take $u$ a radial weak  solution to \eqref{general-f} satisfying {A.2}, and let $m_{\rad}=m_{\rad}(u)$ its radial Morse index. Then the Morse index of $u$ is given by
	\begin{equation}\label{tag-2}\begin{split}
	m(u)= & \sum\limits_{i=1}^{m_{\rad}}
	 \sum\limits_{j=0}^{\lceil J_i -1 \rceil} N_j  \qquad \qquad  \mbox{where} \\
	J_i= & \sqrt{\left(\frac{\n-2}{2}\right)^2-\widehat\L^{\rad}_i}-\frac{\n-2}{2}  
	\end{split}\end{equation}
	and $\lceil t \rceil = \min\{ k\in {\mathbb Z} \, : \, k\ge t\}$ stands for the ceiling function.
		Besides the negative singular eigenvalues are $\widehat\L = \widehat\L^{\rad}_i +\lambda_j$ 
		and the related eigenfunctions are, in spherical coordinates, 
		\begin{equation}\label{decomp-autofunz-f(u)} \psi(x)= \psi_i^{\rad}(r)Y_j(\theta), \end{equation} 
		where $ \psi_i^{\rad}$ is an eigenfunction related to $\widehat\L^{\rad}_i$.	
\end{theorem}

Degeneracy, i.e.  eigenvalue 0, deserves a particular care. The reason is that when $N\ge 3$  the weighted space ${\mathcal H}_{0}$  coincides with $H^1_0(B)$  thanks to Hardy inequality, while in dimension $N=2$ the inclusion is strict.

\begin{proposition}[Characterization of degeneracy]\label{non-degeneracy}
	Assume { A.1} and take $u$ a radial weak  solution to \eqref{general-f} satisfying { A.2}.
	When $N\ge 3$  $u$ is radially degenerate if and only if $\widehat \L_k^{\rad} =\L_k^{\rad}=   0$ for some $k\ge 1$,
	and degenerate if and only if, in addition,
	\begin{equation}\label{non-radial-degeneracy}
	\widehat \L_k^{\rad} =  -j (N-2+j)\qquad \mbox{for some $k, j \ge 1$.}
	\end{equation}
	Otherwise if $N$=2 $u$ is radially degenerate if and only if $\L_k^{\rad} =  0$ for some $k\ge 1$, and degenerate if and only if, in addition, \eqref{non-radial-degeneracy} holds.
	\\
	Besides in any dimension $N\ge 2$,  any nonradial function in the kernel of $L_u$ can be written as in \eqref{decomp-autofunz-f(u)}, where $ \psi_k^{\rad}$ is an eigenfunction related to an eigenvalue $\widehat \L_k^{\rad}$ satisfying \eqref{non-radial-degeneracy}.	
\end{proposition}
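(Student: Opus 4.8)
The plan is to reduce the condition $L_u w=0$ to the singular radial Sturm--Liouville problem \eqref{radial-singular-problem} with $a(x)=f_u(|x|,u)$, by means of the spherical harmonics decomposition already used in Propositions \ref{prop-prel-4} and \ref{general-morse-formula}. I would write a function $w$ in the kernel of $L_u$ as $w=\sum_{j\ge 0}w_j(r)Y_j(\theta)$, where $Y_j$ runs over the spherical harmonics with Laplace--Beltrami eigenvalue $\l_j=j(N+j-2)$. Since $f_u(|x|,u)$ is radial, $L_u$ respects this decomposition, and separating variables shows that $L_uw=0$ if and only if each nonzero radial component solves
\[
-\left(r^{N-1}w_j'\right)'-r^{N-1}f_u(r,u)\,w_j=-\l_j\,r^{N-3}w_j ,
\]
which is precisely \eqref{radial-singular-problem} with singular eigenvalue $\widehat\L^{\rad}=-\l_j$. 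Thus the kernel of $L_u$ is detected by the condition $\widehat\L^{\rad}_k=-\l_j$ for some $k\ge 1$ and some $j\ge 0$: the index $j=0$ encodes the radial kernel and $j\ge 1$ the nonradial one, the latter giving exactly \eqref{non-radial-degeneracy} since $\l_j=j(N-2+j)$.

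What remains is to certify, in each case, that the radial profile $w_j$ lies in the weighted space $\mathcal H_{0,\rad}$ on which \eqref{radial-singular-problem} is posed; this is where the dimension enters. For a component with $j\ge 1$ the identity
\[
\int_B|\na w|^2\,dx=\|Y_j\|_{L^2}^2\left(\int_0^1 r^{N-1}|w_j'|^2\,dr+\l_j\int_0^1 r^{N-3}w_j^2\,dr\right)
\]
shows that membership $w\in H^1_0(B)$ already forces $\int_0^1 r^{N-3}w_j^2\,dr<\infty$, hence $w_j\in\mathcal H_{0,\rad}$; conversely any singular eigenfunction $\widehat\psi_k^{\rad}\in\mathcal H_{0,\rad}$ with $\widehat\L^{\rad}_k=-\l_j$ yields $\widehat\psi_k^{\rad}(r)Y_j(\theta)\in H^1_0(B)$. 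As $\l_j>0$ for every $j\ge 1$, this equivalence is valid in all dimensions $N\ge 2$, which is why the nonradial characterization \eqref{non-radial-degeneracy} and the representation \eqref{decomp-autofunz-f(u)} of nonradial kernel elements are uniform in $N$; this simultaneously settles the concluding ``besides'' assertion.

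For the radial part $j=0$ one has $\l_0=0$, the coercivity gain above disappears, and I would invoke the treatment of the zero singular eigenvalue from Proposition \ref{prop-prel-5}. When $N\ge 3$ the Hardy inequality yields $\mathcal H_{0,\rad}=H^1_{0,\rad}$, so a radial element of $\ker L_u$ is automatically admissible in the singular problem and radial degeneracy is equivalent to $\widehat\L^{\rad}_k=0$ for some $k$. When $N=2$ the inclusion $\mathcal H_{0,\rad}\subsetneq H^1_{0,\rad}$ is strict, a radial solution of $L_uw=0$ need not belong to the weighted space, and the singular eigenvalue $0$ ceases to detect radial degeneracy; in that case I would read the radial kernel off the regular Sturm--Liouville problem \eqref{radial-eigenvalue-problem} directly, obtaining the condition $\L^{\rad}_k=0$. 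Assembling the radial case $j=0$ with the nonradial cases $j\ge 1$ then produces the full statement in both regimes.

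The step I expect to be the main obstacle is precisely this $j=0$, $N=2$ discrepancy: one must show that in the plane the weighted space genuinely fails to capture radial kernel elements---so that the regular eigenvalue $\L^{\rad}$, and not the singular $\widehat\L^{\rad}$, is the correct detector of radial degeneracy---while the nonradial components with $j\ge 1$ continue to live in $\mathcal H_0$ and remain governed by $\widehat\L^{\rad}$. Keeping these two mechanisms apart, the Hardy inequality for $j=0$ in high dimension versus the coercivity supplied by a positive angular eigenvalue for $j\ge 1$, is the delicate point on which the argument hinges.
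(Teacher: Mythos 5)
Your argument is correct and follows essentially the same route as the paper, which obtains this proposition directly from the spherical-harmonics decompositions of Propositions \ref{prop-prel-4} and \ref{prop-prel-5}: decompose a kernel element along the $Y_j$, reduce each mode to the singular radial problem with eigenvalue $-\l_j$, use Hardy's inequality to identify $\mathcal H_{0,\rad}$ with $H^1_{0,\rad}$ when $N\ge 3$, and fall back on the regular radial eigenvalue $\L^{\rad}_k$ for the $j=0$ mode when $N=2$. The only divergence is minor: to place the components $w_j$ with $j\ge 1$ in $\mathcal H_{0,\rad}$ you invoke the orthogonal expansion of the Dirichlet energy together with $\l_j>0$, whereas the paper's Proposition \ref{prop-prel-5} deduces $|w_j(r)|\le Cr$ from the $C^{1,\beta}$ regularity of the kernel element; both justifications are valid.
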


Formulas like \eqref{tag-2} and \eqref{non-radial-degeneracy} have already been exploited to compute explicitly the Morse index in some particular cases, as for instance in \cite{GGPS} and \cite{AGG}  \cite{DGG}, \cite{GGN}, and more recently in \cite{KW}.  
Further in many situations where a limit problem is identified, they allow to compute the Morse index of the solutions for that values of the parameter which are near to the limit. This is the case of  \cite{DIPa} and \cite{DIPb}, both dealing with the Lane-Emden problem. 
The difficulty coming from singularity at the origin has been bypassed  there by approximating the ball by annuli with a small hole. This trick forces to make very accurate estimates to control the approximation parameter, especially when dealing with asymptotic estimates and, in our opinion, it is not needed if one uses the singular eigenvalues and \eqref{tag-2}.

The general characterizations provided by Theorem \ref{general-morse-formula} and Proposition \ref{non-degeneracy}, together with the properties of the singular eigenvalues collected in Subsection \ref{sse:general}, have various interesting consequences. The framework of applications we have in mind is the one of H\'enon type problems
\begin{equation} \label{general-f-H}
\left\{\begin{array}{ll}
-\Delta u = |x|^{\alpha}f(u) \qquad & \text{ in } \Omega, \\
u= 0 & \text{ on } \partial \Omega,
\end{array} \right.
\end{equation}
where  $\a\geq 0$ is a  real parameter.
In this case assumptions A.1 and A.2 simplify into
\begin{enumerate}[{H.}1]
	\item $f\in W^{1,1}_{\loc}(\R)$,
\end{enumerate}
and $u\in H^1_0(\Omega)$ is a weak solution to \eqref{general-f-H} such that 
\begin{enumerate}[{H.}2]	\item  $a(x):=f'(u(x))\in L^{\infty}(\Omega)$.
\end{enumerate}
Notice that {H.1} and {H.2} are fulfilled under the general existence assumptions in \cite{Ni}, because in that framework weak solutions are indeed classical solutions,  see \cite[Proposition 4.1]{AG-sing-2}. 
The strength of the present approach shows up because the arguments valid to deal with the autonomous problem  (i.e. the case $\a=0$)
can be generalized to H\'enon type problems \eqref{general-f-H} in any dimension and with relatively small effort. 
Actually the transformation
	\begin{equation}\label{transformation-henon-intro}
	t=r^{\frac{2+\a}{2}} , \qquad w(t)=u(r) ,
	\end{equation} 
	is used to pass from a radial solution $u$ of 
	\eqref{general-f-H} in the ball  to a  solution $w$ of  
	\begin{equation}\label{lane-emden-radial}
	\begin{cases}
	- \left(t^{M-1} w^{\prime}\right)^{\prime}= t^{M-1} \left(\frac{2}{2+\a}\right)^2 f(w)  , \qquad  & 0< t< 1, \\
	w'(0)=0, \; w(1) =0 , &
	\end{cases}\end{equation}
	with $M =\frac{2(N+\alpha)}{2+\alpha}$.
	\\
	When $M$ is an integer, \eqref{lane-emden-radial} is nothing else that the radial version of a problem of type \eqref{general-f-H} of autonomous type, precisely with $\a=0$ and $f(u)$ replaced by  $\left(\frac{2}{2+\a}\right)^2 f(u)$. But the study performed in Sections \ref{se-prel}, \ref{se:singular} allows to manage this situation for every value of $M\ge 2$, and so to characterize both the Morse index and the degeneracy of a radial solution in terms of some generalized radial singular eigenvalues.
In the following statements we will denote by ${\nu}_i$ (resp., $\widehat{\nu}_i$) the standard (resp., singular) eigenvalues defined as in \eqref{radial-eigenvalue-problem-H} (resp., \eqref{radial-singular-problem-H}) with $M=\frac{2(N+\alpha)}{2+\alpha}$ and $a(t)=\left(\frac{2}{2+\a}\right)^2 f'(w(t))$, being $w(t)$ the function defined in \eqref{transformation-henon-intro}.

\begin{proposition}\label{general-morse-formula-H}
Assume that $\a\geq 0$ and $f$ satisfies $H.1$ and take $u$ a radial weak solution to \eqref{general-f-H} satisfying $H.2$. Then its radial Morse index $m_{\rad}$ is the number of negative eigenvalues of \eqref{radial-singular-problem-H}, and its 
	Morse index is given by
	\begin{align}\label{tag-2-H}\begin{split}
	m(u) = & \sum\limits_{i=1}^{m_{\rad}}\sum\limits_{j=0}^{\lceil J_i -1\rceil } N_j, \quad \qquad \mbox{where} \\
	J_i= & \frac{2+\a}{2} \left(\sqrt{\left(\frac{M-2}{2}\right)^2- \widehat{\nu}_i}-\frac{M-2}{2}\right).
	\end{split}\end{align}
	Furthermore the negative singular eigenvalues are $\widehat\L = \left(\frac{2+\a}{2}\right)^2\widehat\nu_i +\lambda_j$ 
	and the related eigenfunctions are, in spherical coordinates,
	\begin{equation}\label{decomp-autofunz-f(u)-H}
	\psi(x)= \phi_i\big( r^{\frac {2+\a}2}\big) Y_j(\theta), 
	\end{equation} 
	where $ \phi_i$ is an eigenfunction for \eqref{radial-singular-problem-H} related to $\widehat\nu_i$.	
\end{proposition}

\begin{proposition}\label{non-degeneracy-H}
Assume that $\a\geq 0$ and $f$ satisfies $H.1$ and take $u$ a radial weak solution to \eqref{general-f-H} satisfying $H.2$.	When $N\ge 3$ then $u$ is radially degenerate if and only if $\widehat{\nu}_k = {\nu}_k =  0$  for some $k\ge 1$, 
	and degenerate if and only if, in addition,
	\begin{equation}\label{non-radial-degeneracy-H}
	\widehat{\nu}_k =  - \left(\frac{2}{2+\a}\right)^2 j (N-2+j) \qquad \mbox{ for some $k, j\ge 1$.}
	\end{equation}
	Otherwise if $N=2$ then $u$ is radially degenerate if and only if $\nu_k=0$ for some $k\ge 1$, 
	and degenerate if and only if, in addition, \eqref{non-radial-degeneracy-H} holds. \\
	Besides in any dimension $N\ge 2$,  any nonradial function in the kernel of $L_u$ has the form \eqref{decomp-autofunz-f(u)-H}.
\end{proposition}

Afterwards in \cite{AG-sing-2}, by estimating the singular radial eigenvalues, we provide a lower bound for the Morse index of radial solutions to \eqref{general-f-H} which can have an arbitrary number of nodal zones, namely the connected components of $\{x\in \Omega: u(x)\neq 0\}$.

\begin{theorem}\label{morse-estimate-H}
 Assume that $\a\geq 0$ and $f$ satisfies $H.1$, and take $u$ a weak radial solution to \eqref{general-f-H} with $m$ nodal zones, satisfying $H.2$.	Then 
	\begin{align*}	
	& m_{\rad}(u)  \ge m -1 , \\
	& m(u)   \ge   m_{\rad}(u) + (m-1)\sum\limits_{j=1}^{[\frac{2+\alpha}{2}]}N_{j} \ge (m-1)\sum\limits_{j=0}^{[\frac{2+\alpha}{2}]}N_{j} 
	\intertext{If in addition $f'(t)> f(t)/t$ for all $t\neq 0$, then }
	&  m_{\rad}(u) \ge  m , \\
	& m(u)  \ge  m_{\rad}+	(m-1)\sum\limits_{j=1}^{[\frac{2+\alpha}{2}]}N_{j} \ge m+(m-1)\sum\limits_{j=1}^{[\frac{2+\alpha}{2}]}N_{j} 
\end{align*}
\end{theorem}
Here $[ t ] = \max\{ k\in {\mathbb Z} \, : \, k\le t\}$ stands for the integer part.
\\
A far more accurate description is feasible in the particular case of the power nonlinearity, i.e. for the actual H\'enon problem 
\begin{equation} \label{H}
\left\{\begin{array}{ll}
-\Delta u = |x|^{\alpha} |u|^{p-1} u \qquad & \text{ in } \Omega, \\
u= 0 & \text{ on } \partial \Omega,
\end{array} \right.
\end{equation}
whose existence range  in the radial setting is, as known,  $1<p< \frac{N+2+2\a}{N-2}$ in dimension $N\ge 3$ and $1<p$ in dimension $N=2$, see \cite{Ni}.
First, one can see that the radial Morse index of any radial solution is equal to the number of its nodal zones
 and $u$ is radially nondegenerate, see \cite[Theorem 1.3]{AG-sing-2}.
Moreover Proposition \ref{general-morse-formula-H} allows to compute the exact Morse index when the parameter $p$ is at the ends of the existence range by computing the limit of the singular eigenvalues $\widehat\nu_i$.
		More precisely we  shall see that
\begin{theorem}\label{asympt_N3} Let $u$ be a radial solution to \eqref{H} with $m$ nodal zones in $\Omega=B_1(0)\subset \R^N$ with $N\geq 3$. Then there exists $p^\star\in(1, \frac{N+2+2\a}{N-2})$ such that for any $p\in [p^\star,\frac{N+2+2\a}{N-2})$ we have 
	\[\begin{array}{rlr}    
	m(u)& = m \sum\limits_{j=0}^{\left[\frac{2+\alpha}{2}\right]} N_j  
	\quad & \mbox{as $\alpha>0$ is not an even integer, or} \\
	 m(u) & =  m\sum\limits_{j=0}^{\left[\frac{\alpha}{2}\right]} N_j + (m-1) N_{\left[\frac{2+\alpha}{2}\right]} \quad & 	\mbox{if $\alpha=0$ or it is an even number.}	
	\end{array}\]	 
\end{theorem}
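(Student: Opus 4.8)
The plan is to evaluate the Morse--index formula \eqref{tag-2-H} explicitly, the only unknown being the position of each radial singular eigenvalue $\widehat\nu_i$ relative to the thresholds at which $J_i$ crosses an integer. By \eqref{non-radial-degeneracy-H} one has $J_i=n$ exactly when $\widehat\nu_i=-\left(\tfrac{2}{2+\a}\right)^2\l_n$, and a direct computation using $M-2=\tfrac{2(N-2)}{2+\a}$ shows that the pivotal value $\widehat\nu_i=-(M-1)$ corresponds precisely to $J_i=1+\tfrac\a2$. Theorem \ref{prop:ultimo-H} gives $m_{\rad}=m$ and radial nondegeneracy, so there are exactly $m$ negative radial singular eigenvalues $\widehat\nu_1<\dots<\widehat\nu_m<0<\widehat\nu_{m+1}$ and \eqref{tag-2-H} reads $m(u_p)=\sum_{i=1}^m\sum_{0\le j<J_i}N_j$. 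Everything then reduces to locating each $\widehat\nu_i$ against $-(M-1)$.

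The first step is exact and $p$-independent. In the transformed variable $t=r^{(2+\a)/2}$ of \eqref{lane-emden-radial-intro}, the solution $w(t)=u(r)$ solves $-(t^{M-1}w')'=t^{M-1}\left(\tfrac2{2+\a}\right)^2|w|^{p-1}w$ with $M=\tfrac{2(N+\a)}{2+\a}$. Differentiating this identity shows that $w'$ solves the singular linearized equation underlying \eqref{radial-singular-M} with $a(t)=\left(\tfrac2{2+\a}\right)^2p|w(t)|^{p-1}$ and eigenvalue exactly $-(M-1)$; yet $w'$ is inadmissible, since $w(1)=0$ forces $w''(1)=-(M-1)w'(1)$, hence $w'(1)\neq0$ and the Dirichlet condition at $t=1$ fails. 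Because $u$ has $m$ nodal zones and $t^{M-1}w'$ is strictly monotone on each of them (as $(t^{M-1}w')'=-t^{M-1}\left(\tfrac2{2+\a}\right)^2|w|^{p-1}w$ has a fixed sign there), $w'$ has exactly $m-1$ interior zeros in $(0,1)$, is regular at the origin, and does not vanish at $t=1$. Consider now the shifted singular problem whose eigenvalues are $\widehat\nu_i+(M-1)$: its zero-energy solution regular at $0$ is precisely $w'$, and Sturm oscillation applied to it yields exactly $m-1$ negative eigenvalues and $0$ outside the spectrum. Since $\#\{i:\widehat\nu_i\le-(M-1)\}$ equals that count, I obtain the strict chain $\widehat\nu_1<\dots<\widehat\nu_{m-1}<-(M-1)<\widehat\nu_m<0$, i.e.\ $J_i>1+\tfrac\a2$ for $i\le m-1$ and $0<J_m<1+\tfrac\a2$. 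This alone already reproduces the lower bounds of Theorem \ref{morse-estimate-H}.

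The second step supplies the matching upper bound on each $J_i$, and this is where the near-critical assumption enters. Here I would invoke the asymptotic profile of $u_p$ as $p\uparrow\tfrac{N+2+2\a}{N-2}$ (equivalently of $w_p$ as $p$ tends to the Sobolev exponent $\tfrac{M+2}{M-2}$ in dimension $M$): in each nodal zone $w_p$, suitably rescaled, converges to the Aubin--Talenti profile $U$ of the critical equation in dimension $M$, whose radial singular spectrum has $-(M-1)$ as its first and only negative eigenvalue, with one-signed eigenfunction $U'$ and nothing below it. Testing \eqref{radial-singular-M} with rescaled, asymptotically orthogonal copies of $U'$ concentrated at the (tower of) bubbles gives, through the min-max characterization, $\widehat\nu_m\le-(M-1)+o(1)$, hence $\widehat\nu_i\le-(M-1)+o(1)$ for all $i$; conversely, were some $\widehat\nu_i\le-(M-1)-\d$ along a subsequence, the associated normalized eigenfunction would concentrate at a bubble and its blow-up limit would be a radial singular eigenfunction of $U$ with eigenvalue $\le-(M-1)-\d$, contradicting the minimality of $-(M-1)$; thus $\widehat\nu_i\ge-(M-1)-o(1)$. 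Together these yield $\widehat\nu_i\to-(M-1)$, i.e.\ $J_i\to1+\tfrac\a2$, for each fixed $i$.

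Combining the two steps for $p$ in a left neighbourhood $[p^\star,\tfrac{N+2+2\a}{N-2})$, set $k=[\tfrac\a2]$. If $\a$ is not an even integer, $1+\tfrac\a2\in(k+1,k+2)$ is interior, so for $p$ close enough every $J_i$---slightly above its limit for $i\le m-1$, slightly below for $i=m$---lies in $(k+1,k+2)$; then $\sum_{0\le j<J_i}N_j=\sum_{j=0}^{k+1}N_j$ for all $i$ and $m(u_p)=m\sum_{j=0}^{1+[\frac\a2]}N_j$. If instead $\a=0$ or is an even integer, then $1+\tfrac\a2=k+1$ is itself a threshold: the strict inequalities of the exact step keep $J_i>k+1$ for $i\le m-1$ (contribution $\sum_{j=0}^{k+1}N_j$), while the limit forces $J_m\in(k,k+1)$ (contribution $\sum_{j=0}^{k}N_j$), whence $m(u_p)=m\sum_{j=0}^{[\frac\a2]}N_j+(m-1)N_{1+[\frac\a2]}$. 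The main obstacle is the asymptotic input of the third paragraph: rigorously establishing the bubble-tower concentration of $w_p$ in the generally non-integer dimension $M$, together with the convergence of the singular eigenvalue problem---controlling the weight $t^{M-3}$, proving the asymptotic orthogonality of the test functions, and identifying the blow-up limits of the eigenfunctions. By contrast the exact localization of the second paragraph is elementary and robust; it is the matching upper bound, which degenerates as $p$ moves away from the critical value (so that the lower eigenvalues become more negative and the Morse index increases), that carries the analytic weight.
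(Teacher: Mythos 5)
Your proposal is correct and follows essentially the same route as the paper: it combines formula \eqref{tag-2-H} with $m_{\rad}=m$ from Theorem \ref{prop:ultimo-H}, the exact localization $\widehat\nu_1<\dots<\widehat\nu_{m-1}<-(M-1)<\widehat\nu_m<0$ (which is precisely Lemma \ref{stima-nu-H}, proved there by the same comparison of $w'$ with the eigenfunctions via the Picone identity), and the limit $\widehat\nu_i\to-(M-1)$, and your threshold computation $J_i=1+\frac{\alpha}{2}$ at $\widehat\nu_i=-(M-1)$ and the ensuing case distinction on the parity of $\alpha$ match the paper exactly. Note that the asymptotic input you flag as the main obstacle is likewise not proved in this paper but deferred to the companion works \cite{AG18} and \cite{AG18-2}, so your proposal is exactly as complete as the paper's own argument.
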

Theorem \ref{asympt_N3}	is a consequence of the fact that $\widehat \nu_i\to -(M-1)$ as $p\to \frac{N+2+2\a}{N-2}$ for $i=1,\dots,m$, together with some general estimates obtained in \cite{AG-sing-2}. 
	In dimension $2$ instead the situation is different. Actually for a solution with two nodal zones we have $\widehat \nu_2\to -1$ but $\widehat \nu_1 \to - \kappa^2$, where  $\kappa \approx 5.1869$ is a fixed number that has been characterized in \cite{GGP} when dealing with the Lane-Emden problem. Consequently the asymptotic behaviour of the Morse index is described by next Theorem
\begin{theorem}\label{asympt_N2} Let $u$ be a radial solution to \eqref{H} with $2$ nodal zones in $\Omega=B_1(0)\subset \R^2$.
	For all $\alpha\geq 0$ except the sequence $\a_n= 2(\frac{n}{\kappa}-1)$  there exists $p^\star>1$ such that for any $p>p^\star$ we have
	\[ m(u)  =2\left\lceil\frac {2+\a}2\kappa\right\rceil +  2\left\lceil\frac{\alpha}{2}\right\rceil \]
	where $\lceil t\rceil = \min\{ n \in {\mathbb Z} \, : \, n\ge t\}$ stands for the ceiling function.
\end{theorem}

 In the case $\a=0$, Theorems \ref{asympt_N3} and \ref{asympt_N2} give back the asymptotic Morse index of the Lane-Emden problem computed in  \cite{DIPa} and \cite{DIPb}, respectively. For $\a>0$ the proofs of these two theorems require careful estimates on the asymptotic behaviour of the solution $u$ as well as of the asymptotic of the eigenfunctions associated with $\widehat \nu_i$ that we defer to the papers \cite{AG18} and \cite{AG18-2}.

\section*{List of notations}
	In the following $\Omega$ denotes a generic open subset of $\R^{N}$, while $B=\{x\in\R^N \, : \, |x|<1\}$ is the unit ball.
	We shall make use of the following functional spaces:
	\begin{align*}
  L^2(\Omega) &:= \{ v:\Omega\to \R \, : \, v \mbox{ measurable with } \int_{\Omega} v^2 dx <+\infty\} , 
  \\
  H^1(\Omega)& := \{ v\in L^2(\Omega) \, : \, v \mbox{ has first order weak derivatives } \partial_{i}v \text{ in } L^2(\Omega)  \} , 
 \\
    H^1_0(B)& := \{ v\in H^1(B) \, : \, v(x) =0 \mbox { if } |x|=1 \} , 
  \\
   H^1_\rad(B)&:= \{v\in H^1(B)\, : \, v \text{ radial }\},
  \qquad 
  H^1_{0,\rad}(B) := H^1_0(B)\cap  H^1_\rad(B) , 
 \\
  L^2_M&: = \{v:(0,1)\to\R\, : \, v \text{ measurable and s.t. } \int_0^1 r^{M-1} v^2 dr < +\infty\} ,
  \\
  H^1_M& : = \{v\in L^2_M \, : \, \text{ $v$ has a first order weak derivative $v'$ in }L^2_M \},
  \\
  H^1_{0,M} & : = \left\{ v\in H^1_M \, : \, v(1)=0\right\}  ,
  \end{align*} \begin{align*}
{\mathcal L} & := \{v:  B\to \R\, : \,  v \text{ measurable and s.t } \int_B |x|^{-2}v^2\, dx<\infty\},
\\
\mathcal{H} &:=H^1(B)\cap  {\mathcal L}, \quad \mathcal{H}_0 :=H_0^1(B)\cap \mathcal L,
\quad
\mathcal{H}_{0,\rad} :=  \mathcal{H}\cap H^1_{0,\rad}(B) , 
\\
{\mathcal L}_M &  :=\{ v: (0,1)\to \R \, : \, v  \text{ measurable and s.t } \int_0^1 r^{M-3}v^2\, dr<\infty\} ,
\\
\mathcal{H}_M & :=H^1_M\cap {\mathcal L}_M , \quad 
 \mathcal{H}_{0,M} :=H^1_{0,M}\cap {\mathcal L}_M .
\end{align*}

\

\section{Preliminaries on standard eigenvalues and eigenfunctions}\label{se-prel}

In this section we review the by now standard theory of eigenvalues and eigenfunctions for a compact operator, recalling the proofs of the main points that we are going to extend to  the singular setting.  {To simplify the notations we consider hereafter only the case when $\Omega$ is the unit ball.}

In the remaining of the paper we denote by $a$ a function in $L^{\infty}(B)$, and for any $w\in H^1_0(B)$ we let $L_a(w): H^1_0(B)\to \R$ be the linear operator defined by
\begin{equation}\label{L-a}
L_a(w)\varphi \ := \int_B \nabla w \nabla \varphi-a(x) w \varphi \ dx ,
\end{equation}
and by $\mathcal Q_a: H^1_0(B)\to \R$ the quadratic form associated to $L_a$, namely
\begin{equation}\label{forma-quadratica-a}
 {\mathcal Q}_a(w)=\int_B|\nabla w|^2-a(x)w^2\, dx
\end{equation}
When $a(x)\leq 0$, for any $g\in L^2(B)$ we let $T_a(g) : L^2(B) \to L^2(B)$ to be defined by $T_a(g)=w\in H^1_0(B)$ if $w$ is the unique weak solution to $L_a w=g$ in $B$ with Dirichlet boundary conditions. 
 $T_a$ is a linear selfadjoint positive and compact operator, due to the compact embedding of $H^1_0(B)$ into $L^2(B)$, 
 hence by the classical spectral theory it has a nonicreasing sequence of eigenvalues $\mu_j>0$ such that  $\mu_j\to 0$ as $j\to \infty$, and the corresponding eigenfunctions $\psi_j\in H^1_0(B)$ form an Hilbert basis for $L^2(B)$ (see, for instance, \cite{CH}). Then the values $\L_j:=1/\mu_j$ are called eigenvalues for the linear operator $L_a$ and form a sequence  
$0<\L_1<\L_2\leq \dots$ such that $\L_j\to \infty$ as $j\to \infty$. The eigenfunction
$\psi_i\in H^1_0(B) $ corresponding to $\L_i$, called $i$-th eigenfunction of $L_a$,  is a weak solution to 
\begin{equation}\label{eigenvalue-problem}
\left\{\begin{array}{ll}
-\Delta  \psi_i-a(x)\psi_i =\L_i\psi_i & \text{ in } B\\
\psi_i= 0 & \text{ on } \partial B.
\end{array} \right.
\end{equation}
When $a(x)$ is positive somewhere in $B$ we have that $a(x)-\Gamma\leq 0$ for some $\Gamma>0$ and we can repeat the previous argument for the positive operator $T_{a-\Gamma}$. This produces a sequence of values $-\Gamma<\L_1<\L_2\leq \dots$ such that $\L_j\to \infty$ as $j\to \infty$, called eigenvalues for $L_a$ and the corresponding eigenfunctions 
$\psi_i\in H^1_0(B) $, which form an Hilbert basis for $L^2(B)$, weakly  solve \eqref{eigenvalue-problem}. Summarizing for every $a(x)\in L^{\infty}(B)$ the linear operator $L_a$ admits a sequence of eigenvalues $\L_i$ and an Hilbert basis of eigenfunctions $\psi_i$. 

\begin{remark}\label{regolarita}
By elliptic regularity (see, for instance, \cite[Theorem 8.34]{GT}), the eigenfunctions $\psi$  defined according to \eqref{eigenvalue-problem} belong to $C^{1,\beta}(\bar B)$ for some $0<\beta<1$. Indeed since $a\in L^{\infty}(B)$ and $\psi\in H^1_0(B)$, then $a\psi+\L\psi\in L^q(B)$ for any $q\in (1,\infty)$ when $N=2$ and for any $q\in (1, \frac{2N}{N-2}]$ when $N\geq 3$. So elliptic $L^q$ estimates imply that $\psi\in W^{2,q}(B)$ with $q$ as before, and bootstrapping that $\psi\in W^{2,q}(B)$ for any $q$ in any dimension $N$. The $C^{1,\beta}(\bar B)$ regularity then follows by  the Morrey's Theorem. 
If in addition $a\in C^{0,\a}(B)$ for some $\a>0$,  Schauder estimates yield that $\psi_i\in C^{2,\a}(B)$ so that it is also a classical solution to \eqref{eigenvalue-problem}, corresponding to $\L_i$.
\end{remark}

The classical eigenvalues $\L_i$ can be defined using their min-max characterization, namely 
 \begin{equation}\label{Rayleigh1}
 \L_1 :=\min_{\substack{w\in  H^1_0(B)\\ w\neq 0}} \frac{Q_a(w)}{\int_B w^2(x)\, dx}  
  \end{equation}
and it is attained at a function $\psi_1\in H^1_0(B) $ that is a weak solution  to \eqref{eigenvalue-problem} corresponding to $\L_1$.
  Next iteratively, for $i\ge 2$ 
\begin{equation}\label{Rayleigh-var}
\L_i :=\min_{{\substack{w\in  H^1_0(B)\\ w\neq 0\\ w\perp \{ \psi_1,\dots,\psi_{i-1}\} }}} \frac{Q_a(w)}{\int_B w^2(x)\, dx} =\min_{\substack{W\subset H^1_0(B) \\ \mathrm{dim} W=i}} \max_{\substack{w\in W \\ w\neq 0}}\frac{Q_a(w)}{\int_B w^2(x)\, dx} \end{equation}  
where the 
 condition $w\perp \psi_j$ stands for the orthogonality in $L^2(B)$ {and $\psi_j$ is a function that attains $\L_j$ for $j=1,\dots,i-1$}. Again, the infimum in \eqref{Rayleigh-var} is attained at a function $\psi_i\in H^1_0(B) $ that is a weak solution  to \eqref{eigenvalue-problem} corresponding to $\L_i$.

\

Let us recall some useful properties of eigenvalues and eigenfunctions which are well known.  Even though their proofs are by now classical (see, for instance, the book \cite{CH}), we report them in view of the future extension to the singular case.

\

\noindent {\bf{Property 1.}} {\em{  The first eigenvalue $\L_1$ is simple and the first eigenfunction $\psi_1$ is strictly positive (or negative) in $B$.}}

\begin{proof}
  Substituting $\psi_1$ with $|\psi_1|$ in \eqref{Rayleigh1} one obtains that also $|\psi_1|$ attains $\L_1$ and so satisfies \eqref{eigenvalue-problem} corresponding to $\L_1$. The $C^{1,\beta}$ regularity of $\psi_1$ in Remark \ref{regolarita} then implies that $\psi_1$ does not change sign in $B$ for any first eigenfunction. Then we can assume $\psi_1\geq 0$ in $B$. 
  The strict positiveness of $\psi_1$ is due to the strong maximum principle applied to the linear operator $-\Delta -(a+\L_1)$. The fact the $\L_1$ is simple is a consequence of the positiveness of $\psi_1$. Indeed, since $L_a$  is linear if $\psi_1$ and $\psi_2$ solve \eqref{eigenvalue-problem} corresponding to $\L_1$ then also $a\psi_1+b\psi_2$ solves \eqref{eigenvalue-problem} for any $a,b\in \R$, and then minimizes \eqref{Rayleigh-var}. As before this means that also $a\psi_1+b\psi_2$ has one sign in $B$ for every $a,b\in \R$ and this is not possible.
  \end{proof}

\noindent {\bf{Property 2.}} {\em{Eigenfunctions related to different eigenvalues are orthogonal in $L^2(B)$.}}

\begin{proof}
The assertion follows from the weak formulation of \eqref{eigenvalue-problem}. Indeed assume $\psi_i$ and $\psi_j$ are eigenfunctions related respectively to $\L_i$ and $\L_j$ with $\L_i\neq \L_j$. Using $\psi_j$ as test function in the weak formulation of the equation satisfied by $\psi_i$ and using $\psi_i$ as test function in the weak formulation of the equation satisfied by $\psi_j$ we obtain 
\[\L_i\int_B \psi_i\psi_j\, dx=\int_B \nabla \psi_i\nabla \psi_j -a\psi_i\psi_j\, dx=\L_j\int_B \psi_i\psi_j\, dx\]
which implies $\displaystyle \left(\L_i-\L_j\right)\int_B \psi_i\psi_j\, dx=0$.
\end{proof}

This suggests that any eigenfunction related to an eigenvalue $\L_i>\L_1$  changes sign in $B$ (recall that $\psi_1>0$) and so it has at least two nodal domains.

\

\noindent {\bf{Property 3.}} {\em{The $i$-th eigenfunction $\psi_i$  has at most $i$ nodal domains.}}

\begin{proof} We  have already shown that this is true for $i=1$. Assume $i>1$, let $D_k$ be a nodal domain of $\psi_i$ and let $w_k$ be the function that coincides with the eigenfunction $\psi_i$ in $D_k$ and is zero elsewhere. Of course $w_k\in H^1_0(B)$ for any $k$. If the number of nodal domains of $\psi_i$ overpasses $i$, let us choose $i$ among them. It is possible to choose $i$ coefficients $c_1\dots c_i$ (not all null) such that the function $w=\sum\limits_{k=1}^ic_k w_k$ is orthogonal in $L^2(B)$ to all the previous eigenfunctions $\psi_1, \dots \psi_{i-1}$. Besides using $w_k$ as a test function  in the weak formulation of \eqref{eigenvalue-problem} one sees that 
	\begin{align*}
	\int_{D_k} \left(|\nabla w_k|^ 2 - a w_k^2\right)dx =\L_i \int_{D_k}  w_k^2 dx
	\intertext{and then, since the functions $w_k$ have disjoint supports}
	\dfrac{\mathcal{Q}_a(w)}{\int_B w^2 dx} = \dfrac{\sum\limits_{k=1}^jc^2_k\int_{D_k} \left(|\nabla w_k|^2 - a w_k^2\right) dx}{\sum\limits_{k=1}^jc^2_k\int_{D_k} w_k^2 dx} = \Lambda_i ,
	\end{align*}
namely $w$ attains the minimum in \eqref{Rayleigh-var}.
It follows that $w$ 
weakly solves \eqref{eigenvalue-problem} in $B$.
On the other hand $w$ is null in $D_{i+1}\subset B$, $D_{i+1}\neq \emptyset$ and therefore it must be identically zero by the unique continuation principle, see \cite{GL}. \end{proof}

\subsection{Eigenvalues with symmetries and Sturm-Liouville problems}\label{sse:prel-radial}
In the applications it can be useful to consider functions $a$ with some symmetries. Since we are in a radially symmetric domain if $\mathcal{G}$ is any subgroup of the orthogonal group $O(N)$  of $\R^N$ we say that a function $w(x)$ is $\mathcal{G}$-invariant if 
\[
w(g(x))=w(x) \quad \forall \ x\in B \quad \forall \ g \in \mathcal{G}.
\]
Obviously $w$ is radially symmetric if it is  $\mathcal{G}$-invariant with $\mathcal{G}=O(N)$. 
\par
When the function $a$ is $\mathcal{G}$-invariant  we can restrict the linear operator $L_a$ in \eqref{L-a} to the symmetric space 
\begin{equation}\label{H-1-G}
H^1_{0,\mathcal{G}}(B)\ :=\{ w\in H^1_0(B) \, : \, w \text{ is $\mathcal{G}$-invariant}\} ,
\end{equation}
and define the $\mathcal{G}$-invariant eigenvalues of $L_a$ ($\L_i^{\mathcal{G}}$ in the following) as in \eqref{Rayleigh-var}, but  with $H^1_{0,\mathcal{G}}(B)$ instead of $H^1_0(B)$. 
By the principle of symmetric criticality  of \cite{palais} an eigenfunction $\psi_i^{\mathcal{G}}$ which attains $\L_i^{\mathcal{G}}$ belongs to $H^1_{0, \mathcal{G}}(B)$ and weakly solves \eqref{eigenvalue-problem}. Reasoning as in the beginning of the previous Section with $H^1_{0,\mathcal{G}}(B)$ instead of $H^1_0(B)$ one sees that $L_a$ admits a sequence of eigenvalues $\L_i^{\mathcal{G}}$ such that $\L_i^{\mathcal{G}}\to +\infty$ as $i\to \infty$, and an Hilbert basis of eigenfunctions $\psi_i^{\mathcal{G}}$ that belong to $C^{1,\beta}(\bar B)$ by elliptic regularity.

The Properties 1 and 2 of the eigenvalues $\L_i$ mentioned above continue to hold for these symmetric eigenvalues $\L_i ^{\mathcal{G}}$. Moreover we have:

\

\noindent {\bf{Property 4.}} {\em{If $a$ is $\mathcal{G}$-invariant  for a  subgroup $\mathcal{G}$ of $O(N)$, then also $\psi_1$ is $\mathcal{G}$-invariant.}}

\begin{proof}
Since $\psi_1^{\mathcal{G}}>0$, and it is a positive eigenfunction of the linear operator $L_a$, then $\psi_1^{\mathcal{G}}=\psi_1$ and $\L_1=\L_i ^{\mathcal{G}}$.
\end{proof}

 We are interested in particular in the radial symmetry. So, if $\mathcal{G}=O(N)$ and hence $a$ is radial we can define 
the radial eigenvalues $\L_i^{\rad}$ of $L_a$ according to \eqref{Rayleigh-rad} where by $H^1_{0,\rad}(B)$ we denote the subspace of $H^1_0(B)$ given by radial functions and the corresponding radial eigenfunctions $\psi_i^{\rad}$, that weakly solve the Sturm-Liouville problem \eqref{radial-eigenvalue-problem}.
For what we said before $L_a$ admits a sequence of eigenvalues $\L_i^{\rad}$, such that $\L_i^{\rad}\to +\infty$ as $i\to \infty$, and an Hilbert basis of eigenfunctions $\psi_i^{\rad}$ for the subspace of $L^2(B)$ given by radial functions.

For future use we extend this notion of radial eigenvalues and eigenfunctions in \eqref{radial-eigenvalue-problem} to non integer dimension. We therefore take any $M\ge 2$ and  introduce the Lebesgue space $L^2_M$ made up by measurable functions $v:(0,1)\to\R$ such that
	\[ \int_0^1 r^{M-1} v^2 dr < +\infty.\]
	It is a  Hilbert space endowed with the product $ \langle v,w\rangle_M = \int_0^1 r^{M-1} v \, w \, dr$, 
	which yields the orthogonality condition
	\[ v \perp_M w \, \Longleftrightarrow \, \int_0^1 r^{M-1} v \, w \, dr = 0 .\]
	Next we denote by $H^1_{M}$ its subspace made up by that functions $v$ which have weak first order derivative in $L^2_M$, so that the norm 
\[ \|v\|_M = \left(\int_0^1 r^{M-1} \left( v^2+|v'|^2\right) dr\right)^{\frac{1}{2}} \]
is bounded. 
Further  by \cite[VIII.2]{Bbook}  any function in $v\in H^1_{M}$ is almost everywhere equal to a function $\tilde v\in C(0,1]$ which is differentiable almost everywhere with
\begin{equation}\label{ffci} 
\tilde v(r_2)-\tilde v(r_1) = \int_{r_1}^{r_2} v'(r) dr .\end{equation}
Therefore we may assume w.l.g.~that any $v\in H^1_{M}$ is continuous in $(0,1]$ and satisfies \eqref{ffci}.
This allows to introduce the set
\begin{equation}\label{accazeroemme}
H^1_{0,M} = \left\{ v\in H^1_M \, : \, v(1)=0\right\} .
\end{equation}
The spaces $H^1_M$ and $H^1_{0,M}$ can be seen as generalizations of the spaces of radial functions  because when $M=N$ is an integer then $H^1_N$ is actually equal to $H^1_{\rad}(B)$ by \cite[Theorem 2.2]{DFetal}. Next we  generalize the radial eigenvalues by extending
the Sturm-Lioville problem \eqref{radial-eigenvalue-problem} to
\begin{equation}\label{radial-eigenvalue-problem-M}
\left\{\begin{array}{ll}
-\left(r^{M-1}\psi_i'\right)' -r^{M-1} a(r)\psi_i =r^{M-1} \nu_i\psi_i & \text{ for } r\in ( 0,1)\\
\psi_i'(0)=0 , \quad \psi_i(1)=0  
\end{array} \right.
\end{equation}
where now $M\ge 2$ can assume any real value.
By weak solution to \eqref{radial-eigenvalue-problem-M} we mean 
\begin{equation}\label{radial-eigenvalue-weak-sol}
\begin{split}{\psi_i\in H^1_{0,M} \mbox{ such that  for every $\varphi \in H^1_{0,M}$}}\\
\int_0^1 r^{M-1}\left(\psi_i'\varphi' -  a \psi_i \varphi\right) dr = \nu_i\int_0^1 r^{M-1}\psi_i \varphi dr .
\end{split}\end{equation}
Whenever there exist $\nu_i$ and $\psi_i\neq 0$ that satisfy \eqref{radial-eigenvalue-weak-sol}, we call $\nu_i$ generalized radial eigenvalue and $\psi_i$ generalized radial eigenfunction.  Moreover when $M=N$ is an integer $\nu_i=\L_i^\rad$ are the radial eigenvalues.
  First we deal with the regularity and, in doing so, we also  point out that these generalized radial eigenfunctions enjoy the typical behaviour of the radial functions at $r=0$, i.e.~they have null derivative, although any explicit condition is not imposed at $r=0$  in the weak formulation.
 This justifies the choice of \eqref{radial-eigenvalue-weak-sol} as a definition of weak solution to \eqref{radial-eigenvalue-problem-M}. 
\begin{proposition}\label{radial-regular-0}
	If $\psi_i$ satisfies \eqref{radial-eigenvalue-weak-sol}	
		then $\psi_i\in C^1[0,1]$ with $\psi_i'(0)=0$.
\end{proposition}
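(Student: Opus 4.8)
The plan is to read \eqref{radial-eigenvalue-weak-sol} as a one-dimensional ODE on the open interval $(0,1)$ and then analyse the behaviour of the solution near the singular endpoint $r=0$. First I would set $b(r):=a(r)+\nu_i$ and $g(r):=-b(r)\psi_i(r)$; since $a\in L^\infty(0,1)$ and $\psi_i\in L^2_M$ one has $g\in L^2_M$, and by Cauchy--Schwarz (using $M\ge 2$, so that $\int_0^1 r^{M-1}\,dr=1/M<\infty$) the function $r^{M-1}g(r)$ lies in $L^1(0,1)$. Testing \eqref{radial-eigenvalue-weak-sol} against $\varphi\in C_c^\infty(0,1)\subset H^1_{0,M}$ shows that $(r^{M-1}\psi_i')'=r^{M-1}g$ in the distributional sense on $(0,1)$; hence $r^{M-1}\psi_i'$ agrees a.e.\ with an absolutely continuous function, and integrating from an interior point produces a constant $\ell\in\R$ with
\[ r^{M-1}\psi_i'(r)=\ell+\int_0^r s^{M-1}g(s)\,ds,\qquad r\in(0,1). \]

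Next I would show $\ell=0$. If $\ell\neq0$ then $\psi_i'(r)\sim \ell\,r^{1-M}$ as $r\to0$, so $r^{M-1}|\psi_i'(r)|^2\sim \ell^2 r^{1-M}$, which is not integrable on $(0,1)$ because $1-M\le-1$ for $M\ge 2$; this contradicts $\psi_i\in H^1_M$. Hence $\ell=0$ and $\psi_i'(r)=r^{1-M}\int_0^r s^{M-1}g(s)\,ds$. The crucial---and hardest---step is then to prove that $\psi_i$ is bounded near $0$: this cannot follow from $\psi_i\in H^1_M$ alone, since for $M>2$ the space $H^1_M$ contains functions such as $r^{-\gamma}$ with $0<\gamma<\tfrac{M-2}{2}$ which blow up at the origin, so one must genuinely exploit the equation. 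I would run a bootstrap. Starting from the crude bound $|\psi_i'(r)|\le C\,r^{1-M/2}$ (obtained from the displayed formula and Cauchy--Schwarz), integration yields $|\psi_i(r)|\le C\,r^{-\gamma_0}$ with $\gamma_0=\max\{0,\tfrac{M}{2}-2\}$. Feeding any estimate of the form $|\psi_i(r)|\le A\,r^{-\gamma}$ back into the formula gives $|\psi_i'(r)|\le C'\,r^{1-\gamma}$, and integrating once more gives $|\psi_i(r)|\le C''\,r^{-(\gamma-2)}$ when $\gamma>2$ (with a logarithm, hence any exponent $<2$, in the borderline case $\gamma=2$). Each iteration lowers the exponent by $2$, so after finitely many steps $\gamma<2$ and $\psi_i$ is bounded---indeed has a finite limit---near $0$.

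Once $\psi_i$ is bounded near $0$ the function $g$ is bounded there as well, so the representation formula gives $|\psi_i'(r)|\le r^{1-M}\,C\int_0^r s^{M-1}\,ds=\tfrac{C}{M}\,r\to0$ as $r\to0$. Finally, since $r^{M-1}g\in L^1(0,1)$, the map $r\mapsto\int_0^r s^{M-1}g(s)\,ds$ is continuous on $[0,1]$, so $r^{M-1}\psi_i'$ is continuous on $[0,1]$ and $\psi_i'$ is continuous on $(0,1]$; extending it by $\psi_i'(0):=0$ makes it continuous on all of $[0,1]$. Together with \eqref{ffci} this yields $\psi_i\in C^1[0,1]$ with $\psi_i'(0)=0$. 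The main obstacle is the boundedness bootstrap of the previous paragraph, which is precisely where the weight $r^{M-1}$ and the hypothesis $M\ge 2$ are really used; the rest is a routine reading of the weak equation as a regular ODE away from the origin.
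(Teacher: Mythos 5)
Your proof is correct, and its overall skeleton is the paper's: obtain the representation $r^{M-1}\psi_i'(r)=-\int_0^r s^{M-1}\left(a(s)+\nu_i\right)\psi_i(s)\,ds$, bootstrap boundedness of $\psi_i$ at the origin, and then read off $\psi_i'(r)\to 0$. Where you genuinely diverge is in how that representation is obtained. The paper proves \eqref{conto-con-lo-scalino} by testing \eqref{radial-eigenvalue-weak-sol} with an explicit family of cut-offs that equal $1$ on $[0,t-\e]$ (hence do not vanish at the origin) and passing to the limit, which occupies most of its proof. You instead test only with $\varphi\in C^\infty_c(0,1)\subset H^1_{0,M}$, which gives the ODE on $(0,1)$ with an undetermined constant of integration $\ell=\lim_{r\to 0}r^{M-1}\psi_i'(r)$ (well defined because $r^{M-1}(a+\nu_i)\psi_i\in L^1(0,1)$), and then kill $\ell$ by noting that $\ell\neq 0$ would force $r^{M-1}|\psi_i'|^2\sim \ell^2 r^{1-M}\notin L^1(0,1)$, contradicting $\psi_i'\in L^2_M$ since $1-M\le -1$. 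This is shorter and replaces the delicate limiting argument by a one-line integrability observation, at no cost. Your bootstrap is essentially the paper's iteration of \eqref{pippo-1} (the paper seeds it with the Radial Lemma \ref{radial-lemma}, you seed it with Cauchy--Schwarz applied to $\psi_i'$, which gives the marginally better starting exponent $\max\{0,\tfrac M2-2\}$; both lower the exponent by $2$ per step and handle the logarithmic borderline cases). Your remark that boundedness cannot follow from $\psi_i\in H^1_M$ alone, because for $M>2$ that space contains $r^{-\gamma}$ with $0<\gamma<\tfrac{M-2}{2}$, correctly identifies where the equation must actually be used.
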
 
\begin{proof}
	For simplicity of notations we will write $\psi$ and $\nu$ instead of $\psi_i$ and $\nu_i$.
	Since $\psi\in H^1_{0,M}$, it is continuous on $(0,1]$ and \eqref{ffci} becomes
	\begin{equation}\label{abs-cont}
	\psi(r)= -\int_r^1 \psi'(t) dt .
	\end{equation}
		Starting from the weak formulation \eqref{radial-eigenvalue-weak-sol} it is straightforward to check that 
	\begin{equation}\label{conto-con-lo-scalino}
	t^{M-1}\psi'(t) = -\int_0^t s^{M-1} \left(a(s)+\nu\right) \psi(s) ds 
	\end{equation}
	for almost every $t\in(0,1)$.
Actually choosing a test function $\varphi_{\e}$ such that $0\le \varphi_{\e} \le 1$, $-c/\e \le \varphi'_{\e} \le 0$ and 
	\[ \varphi_{\e}(r)=\begin{cases} 1 & 0\le r \le t-\e , \\ 1-\dfrac{r-t + \e}{2\e} & |r-t|\le \e-\e^2 ,\\ 0 & t + \e \le r\le 1, \end{cases}\]		
	gives
	\begin{align*} 
	\left(\int_{t-\e}^{t-\e+\e^2} + \int_{t+\e-\e^2}^{t+\e}\right) r^{M-1}\psi'\varphi'_{\e} dr
	-
	\frac{1}{2\e}\int_{t -\e+\e^2}^{t +\e-\e^2} r^{M-1} \psi' dr = \\ \int_0^{t -\e}r^{M-1} (a+\nu) \psi dr  
	+\left(\int_{t-\e}^{t-\e+\e^2} + \int_{t+\e-\e^2}^{t+\e}\right) r^{M-1} (a +\nu) \psi\,\varphi_{\e} dr
	\\ -\frac{1}{2\e}\int_{t -\e+\e^2}^{t +\e-\e^2} r^{M-1} (r-t ) (a+\nu) \psi dr 
	+\frac{1}{2}\int_{t -\e+\e^2}^{t +\e-\e^2} r^{M-1} (a +\nu)\psi dr
	.\end{align*}
	\eqref{conto-con-lo-scalino} follows by sending $\e\to 0$. Indeed 
	\[\int_{t\pm\e}^{t\pm\e+\mp\e^2}  r^{M-1} \psi'\varphi'_{\e}  dr  \to 0 \]
	because by Holder inequality we have
	\begin{align*}
	\int_{t\pm\e}^{t\pm\e+\mp\e^2}  r^{M-1}|\psi'\varphi'_{\e}|  dr & \le \left(\int_{t\pm\e}^{t\pm\e+\mp\e^2}  r^{M-1} |\varphi'_{\e}|^2  dr\right)^{\frac{1}{2}}\left(\int_{t\pm\e}^{t\pm\e+\mp\e^2}  r^{M-1} |\psi'|^2  dr\right)^{\frac{1}{2}}
	\end{align*}
	where by the properties of $\varphi_{\e}$
	\begin{align*}
	\int_{t\pm\e}^{t\pm\e+\mp\e^2}  r^{M-1} |\varphi'_{\e}|^2  dr \le \frac{C}{\e^2} \int_{t\pm\e}^{t\pm\e+\mp\e^2}  r^{M-1}  dr \le C  
	\end{align*} 
	while clearly $\displaystyle \int_{t\pm\e}^{t\pm\e+\mp\e^2}  r^{M-1} |\psi'|^2  dr \to 0 $ since $r^{M-1} |\psi'|^2 \in L^1(0,1)$.
	Moreover it is easily seen that also the function $r^{M-1}\psi(r)$ 
	is in $L^1(0,1)$, so that the absolute continuity of the integral, together with the boundedness of $a$ and $\varphi_{\e}$,  yield 
\begin{align*}
  \int_0^{t -\e}r^{M-1} (a+\nu)\psi dr   \to \int_0^{t}r^{M-1} (a+\nu) \psi dr , \quad & \int_{t\pm\e}^{t\pm\e\mp\e^2} r^{M-1} (a+\nu) \psi\, \varphi_{\e} dr
   \to 0 , 
  \\
  \int_{t -\e+\e^2}^{t +\e-\e^2} r^{M-1} (a+\nu)\psi dr \to 0 .
\end{align*}
On the other hand, denoting by $\fint_a^b f(r)dr$ the (integral) mean value of $f$ for almost every $t$ we have
	\begin{align*}
\frac{1}{2\e}\int_{t -\e+\e^2}^{t +\e-\e^2} r^{M-1} (r-t ) (a+\nu)\psi dr
= \frac{1-\e}{2}\fint_{t -\e+\e^2}^{t +\e-\e^2} r^{M-1} (r-t ) (a +\nu)\, \psi dr \to 0, \\
	\frac{1}{2\e}\int_{t-\e+\e^2}^{t +\e-\e^2} r^{M-1} \psi' dr  = (1-\e) \fint_{t -\e+\e^2}^{t +\e-\e^2} r^{M-1} \psi' dr \to t^{M-1} \psi'(t),
	\end{align*}
as also $r^{M-1} \psi'(r)$ is in $L^1(0,1)$.
This concludes the proof of  \eqref{conto-con-lo-scalino}. 

Besides, since the right term of \eqref{conto-con-lo-scalino} is continuous, it also follows that $\psi'$ is continuous on $(0,1]$ and then  \eqref{abs-cont} implies that $\psi\in C^1(0,1]$.
If $\psi'(r)$ has a finite limit as $r\to 0$, then from \eqref{abs-cont} it follows that $\psi(r)$ is continuous  also at $r=0$ and next de L'Hopital Theorem implies that $\psi\in C^1[0,1]$.
So we can conclude the proof by checking that $\psi'(r)\to0$ as $r\to 0$. 
\\
As a preliminary step  we prove that $\psi(r)$ is bounded at $r=0$. Inserting \eqref{conto-con-lo-scalino} inside \eqref{abs-cont} gives
	\begin{equation}\label{pippo-1}
	|\psi(r)| \le (\|a\|_{\infty}+|\nu|) \int_r^1 t^{1-M} \int_0^t s^{M-1} |\psi(s)|ds \, dt ,
	\end{equation}
	which allows to start a bootstrap argument starting from the Radial Lemma \ref{radial-lemma}.
	In the following computations the symbol $C$ will denote a constant that may vary from line to line.
	To begin with we take $M=2$  and insert \eqref{radial-lemma-estimate} inside \eqref{conto-con-lo-scalino}, so that
	\[|\psi(r)| \le C \int_r^1 t^{1-M} \int_0^t s^{M-1} |\log s|^{\frac{1}{2}} ds \, dt  \le C \int_r^1 t^{1-M} \int_0^t s^{M-\frac{3}{2}}  ds \, dt = C\int_r^1 t^{\frac{1}{2}} dt  < +\infty.\]
	Otherwise if $M>2$ the same computation gives \[|\psi(r)| \le C \int_r^1 t^{1-M} \int_0^t s^{\frac{M}{2}} ds \, dt  = C \int_r^1 t^{\frac{4-M}{2}} dt.
	\]	
	If $M<6$ we the proof is ended. If $M=6$ we get $|\psi(r)|\le C|\log r|$ and putting this estimate inside \eqref{pippo-1} allows to conclude similarly to the case $M=2$. Otherwise if $M>6$ we have improved anyway the first estimate by  $|\psi(r)|\le C (1+r^{-\frac{M-6}2})$. 
	Putting this new estimate inside \eqref{pippo-1} gives
	$|\psi(r)|\le C \int_r^1(1+ t^{\frac{8-M}{2}})dt$ and iteratively $|\psi(r)|\le C \int_r^1(1+t^{\frac{4n-M}{2}})dt$ which eventually gives the statement.
	
Finally we write $\psi'(r)=\dfrac{r^{M-1}\psi'(r)}{r^{M-1}}$. 
From \eqref{conto-con-lo-scalino} it readily follows that $r^{M-1}\psi'(r)\to 0$ as $r\to 0^+$, so we can use de L'Hopital theorem to compute 	
\[\lim\limits_{r\to 0} \psi'(r)=\lim\limits_{r\to 0} \frac{r^{M-1}\psi'(r)}{r^{M-1}} = \lim\limits_{r\to 0} \frac{-r\left(a(r)+\nu\right)\psi(r)}{M-1}=0 		\]
because $\psi$ is bounded. 
\end{proof}

\begin{remark}\label{2.3}
Mimicking the computations in the concluding part of the proof of Proposition \ref{radial-regular-0} one can see that $\psi'(r)=o(r^{\gamma})$ for any $\gamma <1$. Indeed 
\[\lim\limits_{r\to 0} \frac{\psi'(r)}{r^{\gamma}}=
\lim\limits_{r\to 0} \frac{r^{M-1}\psi'(r)}{r^{M-1+\gamma}} = \lim\limits_{r\to 0} \frac{ - r^{1-\gamma}(a+\nu)\psi}{M-1+\gamma} =0.
\]
Hence $r^{M-3}|\psi'|^2\to 0$ as $r\to 0$ and so $\int_0^1 r^{M-3}|\psi'|^2 dr<+\infty$.
Besides $r^{M-1}\psi'$ has a weak derivative ${\AL -} r^{M-1} (a+\nu)\psi \in L^{\infty}(0,1)$. Next 
$\psi'' =  -(a+\nu)\psi - \frac{M-1}{r} \psi'$ on $(0,1)$ in weak sense and
\begin{align*}
\int_0^1 r^{M-1}|\psi''|^2 dr & = \int_0^1 r^{M-1}|(a+\nu)\psi - \frac{M-1}{r} \psi'|^2 dr \\
&\le C_1\int_0^1 r^{M-1} \psi^2 dr + C_2 \int_0^1 r^{M-3}|\psi'|^2 dr <+\infty,
\end{align*}
showing that $\psi'\in H^1_{M}$. 
In particular the equation in \eqref{radial-eigenvalue-problem-M} is satisfied a.e.
\end{remark}

The generalized radial eigenvalues defined by means of the singular Sturm-Liouville problem \eqref{radial-eigenvalue-problem-M} have an equivalent  variational formulation.
Indeed we can define the linear operator  $T_a : L_M^2\to L_M^2$ by means of $T_a(g)=w$  if $w\in   H^1_{0,M}$ satisfies  
	\[\int_0^1 r^{M-1}w'\varphi'\ dr-\int_0^1 r^{M-1}a(r)w \varphi \ dr=\int_0^1 r^{M-1}g \varphi \ dr  \ \ \text{ for any }\varphi \in H^1_{0,M}.\]
$T_a$ is self-adjoint  and compact because $H^1_M$ is compactly embedded in $L_M^2$ (see Lemma \ref{sobolev}).
So since $a\in L^\infty$  it admits a sequence of eigenvalues $\nu_i$ such that $\nu_i\to \infty$ as $i\to \infty$ and the corresponding eigenfunctions $\psi_i \in  H^1_{0,M}$ form a basis for $L_M^2$. In that way
\[
\nu_1 :=\min_{\substack{w\in  H^1_{0,M}\\ w\neq 0}} \frac{{\mathcal Q}_{a,M}(w)}{\int_0^1 r^{M-1} w^2(r)\, dr}  ,
\]
where  
	\begin{equation}\label{forma-quadratica-a-rad}
\mathcal Q_{a,M} : H^1_{0,M}\to \R, \qquad {\mathcal Q}_{a,M}(w)=\int_0^1 r^{M-1}\left(|w'|^2-a(r)w^2\right) dr\end{equation}
and for $i\ge 2$
	\begin{equation}\label{Rayleigh-rad-M}
\nu_i :=\min_{{\substack{w\in  H^1_{0,M}\\ w\neq 0\\ w\perp_{M} \{ \psi_1,\dots,\psi_{i-1}\} }}} \frac{Q_{a,M}(w)}{\int_0^1 r^{M-1}w^2(r)\, dr} =\min_{\substack{W\subset H^1_{0,M} \\{\mathrm{dim}} W=i}} \max_{\substack{w\in W \\ w\neq 0}}\frac{Q_{a,M}(w)}{\int_0^1 r^{M-1}w^2(r)\, dr}  .
\end{equation}
where $\psi_j$ is an eigenfunction corresponding to $\nu_j$ for $j=1,\dots, i-1$.
\\
Using this characterization it is easy to prove that Properties 1-3 hold still for these generalized eigenvalues $\nu_i$.

An exhaustive essay of classical Sturm-Liouville theory can be found, for instance, in the book \cite{Walter}, where by classical we mean for a strictly positive weight function. 
 In our case the weight $r^{M-1}$ in \eqref{radial-eigenvalue-problem-M} degenerates at the origin, nevertheless this problem inherits the main properties of the Sturm Liouville eigenvalue problems and in particular the following important one.

\

\noindent {\bf{Property 5.}} {\em{Each generalized radial eigenvalue $\nu_i$ is simple and any $i$-th eigenfunction has exactly $i$ nodal domains. In particular each radial eigenvalue $\L_i^{\rad}$ is simple and any $i$-th eigenfunction $\psi_i^{\rad}$ has exactly $i$ nodal domains.}}

\

	To prove Property 5 it is needed to make sure that the well known Sturm Separation Theorem and Sturm-Picone Comparison
	hold also in this  singular setting. This fact is a consequence of the following Picone-type identity.
\begin{lemma}[Picone identity]\label{integrali-in-croce}
		Let $\psi_i, \psi_j$ weakly solve \eqref{radial-eigenvalue-problem-M} corresponding to ${\nu}_i , {\nu}_j$ respectively. Then 	\begin{align}\label{picone-identity-0}
			\left(r^{M-1}\left(\psi_i'\psi_j- \psi_i\psi_j'\right) \right)'& = r^{M-1}(\nu_j-\nu_i) \psi_i \psi_j
		\intertext{for all $r\in[0,1]$.	If, in addition, $r$ is such that $\psi_j(r)\neq 0$, then at that $r$} \label{picone-identity}
		\left(r^{M-1}\left(\psi_i'\psi_j- \psi_i\psi_j'\right)\frac{\psi_i}{\psi_j} \right)' &= 
		r^{M-1}(\nu_j-\nu_i) \psi_i^2 + r^{M-1} \left(\psi_i' - \psi'_j \frac{\psi_i}{\psi_j}\right)^2  .
	\end{align} 
	\end{lemma}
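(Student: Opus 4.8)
The plan is to prove the two Picone-type identities \eqref{picone-identity-0} and \eqref{picone-identity} directly from the weak formulation \eqref{radial-eigenvalue-weak-sol}, relying crucially on the regularity already established in Proposition \ref{radial-regular-0} and Remark \ref{2.3}. The first identity is an algebraic computation that holds wherever $\psi_i,\psi_j$ are smooth enough, and the second is a purely algebraic rearrangement of the first valid at points where $\psi_j\neq 0$. Since Remark \ref{2.3} tells us that each eigenfunction actually satisfies the equation in \eqref{radial-eigenvalue-problem-M} \emph{almost everywhere} in the strong sense, with $\psi_i'\in H^1_M$ and $r^{M-1}\psi_i'$ having the weak derivative $-r^{M-1}(a+\nu_i)\psi_i$, the natural route is to compute the left-hand derivatives using the product rule.

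First I would expand the left side of \eqref{picone-identity-0}. Writing $(r^{M-1}\psi_i')'=-r^{M-1}(a+\nu_i)\psi_i$ and $(r^{M-1}\psi_j')'=-r^{M-1}(a+\nu_j)\psi_j$ (which hold a.e.\ by Remark \ref{2.3}), the product rule gives
\begin{align*}
\left(r^{M-1}(\psi_i'\psi_j-\psi_i\psi_j')\right)'
&= (r^{M-1}\psi_i')'\psi_j + r^{M-1}\psi_i'\psi_j' - r^{M-1}\psi_i'\psi_j' - \psi_i(r^{M-1}\psi_j')'\\
&= -r^{M-1}(a+\nu_i)\psi_i\psi_j + r^{M-1}(a+\nu_j)\psi_i\psi_j\\
&= r^{M-1}(\nu_j-\nu_i)\psi_i\psi_j,
\end{align*}
where the two $r^{M-1}\psi_i'\psi_j'$ terms cancel and the potential term $a$ drops out. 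This establishes \eqref{picone-identity-0} a.e.; continuity of both sides (guaranteed by $\psi_i\in C^1[0,1]$ from Proposition \ref{radial-regular-0}) then extends it to every $r\in[0,1]$, and at $r=0$ the Wronskian-type expression vanishes because $r^{M-1}\to 0$ and $\psi_i,\psi_i'$ are bounded.

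For \eqref{picone-identity} I would differentiate $r^{M-1}(\psi_i'\psi_j-\psi_i\psi_j')\frac{\psi_i}{\psi_j}$ on the open set where $\psi_j\neq 0$, using the Leibniz rule on the product of \eqref{picone-identity-0}'s left-hand expression with the factor $\psi_i/\psi_j$. Abbreviating $W:=r^{M-1}(\psi_i'\psi_j-\psi_i\psi_j')$, one gets $\left(W\,\frac{\psi_i}{\psi_j}\right)'=W'\frac{\psi_i}{\psi_j}+W\frac{\psi_i'\psi_j-\psi_i\psi_j'}{\psi_j^2}$; substituting $W'=r^{M-1}(\nu_j-\nu_i)\psi_i\psi_j$ from the first identity and $W=r^{M-1}(\psi_i'\psi_j-\psi_i\psi_j')$ into the second term yields $r^{M-1}(\nu_j-\nu_i)\psi_i^2+r^{M-1}\frac{(\psi_i'\psi_j-\psi_i\psi_j')^2}{\psi_j^2}$, and the last fraction is exactly $\left(\psi_i'-\psi_j'\frac{\psi_i}{\psi_j}\right)^2$. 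This is the desired identity.

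The only delicate point—and the step I expect to require the most care—is the justification that the classical product rule may be applied despite the singular weight and the fact that the equation holds only a.e. The cleanest argument is to invoke Remark \ref{2.3}, which upgrades the weak solutions to functions with $\psi_i\in C^1[0,1]$ and $\psi_i'\in H^1_M$ satisfying the equation pointwise a.e.; since $r^{M-1}\psi_i'$ is absolutely continuous with the stated weak derivative, the Wronskian $W$ is absolutely continuous on $[0,1]$ and on compact subsets of $\{\psi_j\neq 0\}$ the quotient $\psi_i/\psi_j$ is $C^1$, so all products are differentiable a.e.\ with the Leibniz rule valid in the a.e.\ sense. Continuity then promotes the identities from ``a.e.'' to ``everywhere'' on the relevant sets, completing the proof.
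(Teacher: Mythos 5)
Your proof is correct and rests on the same regularity input (Proposition \ref{radial-regular-0}) as the paper's, but the mechanism differs slightly. The paper never uses the a.e.\ pointwise form of the equation: it shows that the Wronskian $\xi=r^{M-1}(\psi_i'\psi_j-\psi_i\psi_j')$ has weak derivative $(\nu_j-\nu_i)r^{M-1}\psi_i\psi_j$ by computing $\int_0^1\xi\varphi'\,dr$ directly from the weak formulation \eqref{radial-eigenvalue-weak-sol}, taking $\varphi\psi_i$ and $\varphi\psi_j$ as test functions so that the cross terms $\int r^{M-1}\psi_i'\psi_j'\varphi$ cancel. You instead upgrade to the a.e.\ strong equation via Remark \ref{2.3} and apply the Leibniz rule for products of absolutely continuous functions; this is legitimate because \eqref{conto-con-lo-scalino} exhibits $r^{M-1}\psi'$ as the integral of an $L^1$ function, hence absolutely continuous, and $\psi_j\in C^1[0,1]$. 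Both routes then finish identically: the a.e.\ derivative of $\xi$ coincides with a continuous function, so $\xi\in C^1[0,1]$ and the identity holds at every point, and \eqref{picone-identity} is pure algebra on the open set $\{\psi_j\neq 0\}$. Two minor remarks: the vanishing of $\xi$ at $r=0$ is not needed for the identity itself (it is used only later, in the proof of Property 5); and Remark \ref{2.3} as printed states the weak derivative of $r^{M-1}\psi'$ with the opposite sign, so you were right to take $(r^{M-1}\psi')'=-r^{M-1}(a+\nu)\psi$, consistent with \eqref{conto-con-lo-scalino} — worth flagging explicitly so the sign convention is unambiguous.
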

	\begin{proof}
		By Proposition \ref{radial-regular-0} the function 
		\[ \xi(r)= r^{M-1}\left(\psi_i'\psi_j- \psi_i\psi_j'\right)\] 
		is continuous on $[0,1]$. Let us check that formula \eqref{picone-identity-0} holds in the sense of distributions. 
		Indeed for any $\varphi\in C^{1}_0(0,1)$ we have that $\varphi\psi_i, \varphi\psi_j \in C^{1}_0(0,1)$ (remembering that $\psi_i,\psi_j\in C^1[0,1]$ by Proposition \ref{radial-regular-0}) and using them as test functions in \eqref{radial-eigenvalue-problem-M} gives
		\begin{align*}
		\int_0^1 \xi\varphi' dr & = \int_0^1 r^{M-1}\psi_i'\psi_j\varphi' dr - \int_0^1 r^{M-1} \psi_j'\psi_i\varphi' dr \\
		& =\int_0^1 r^{M-1}\psi_i' \left(\psi_j\varphi\right)' dr - \int_0^1 r^{M-1} \psi_j'\left(\psi_i\varphi\right)' dr
		=(\nu_i-\nu_j)\int_0^1 r^{M-1} \psi_i\psi_j \varphi dr
		\end{align*}
                and this implies that $\xi$ has a weak derivative which is $(\nu_j-\nu_i)r^{M-1} \psi_i\psi_j$.
		So \cite[VIII.2]{Bbook} yields that
		\[
		\xi(t) -\xi(r)= (\nu_j-\nu_i)\int_r^t s^{M-1} \psi_i\psi_j ds,	
		\]
		for almost any $0<r<t\le 1$. Since both terms are continuous in $[0,1]$, the previous equality actually holds for every $0\le r < t \le 1$. Eventually \eqref{picone-identity-0} follows recalling that also the integrand in the r.A.s. is continuous, afterwards \eqref{picone-identity} is elementary calculus.
	\end{proof}

With Lemma \ref{integrali-in-croce} in hand, Property 5 can be proved reasoning as for classical eigenfunctions and any restriction does not come from the  degeneracy at $r=0$. For the sake of completeness we report here a detailed proof, which will be a guide to deal with singular eigenfunctions, in the following section.

\begin{proof}[Proof of Property 5]
First  we prove that each eigenvalue is simple. Assume by contradiction that $\nu_i=\nu_j$ for some $i\neq j$ and denote by $\psi_i$ and $\psi_j$ their respective eigenfunctions, that we can take orthogonal in the sense that $\psi_i\perp_M \psi_j$.   In particular $\psi_i$ and $\psi_j$ are linearly independent.
On the other hand \eqref{picone-identity-0} yields that $r^{M-1}(\psi_i\psi_j'-\psi_i'\psi_j)$ is constant, and therefore it must be identically zero, since both $\psi_i$ and $\psi_j$ are null at $r=1$ by definition of $H^1_{0,M}$. But this clearly yields the contradiction that the ratio $\psi_i /\psi_j$ must be constant.
	
Next  we check that the eigenfunction $\psi_{i}$ has exactly $i$ nodal domains. 
Notice that by Property 3 in the space $H^1_{0,M}$ instead of $H^1_{0}(B)$ any generalized radial eigenfunction $\psi_i$ has at most $i$-nodal domains. In particular $\psi_1$ is positive and so it has $1$ nodal domain while $\psi_2$ changes sign and so it has $2$ nodal domains.  The proof can be completed by taking that $\psi_i$ has exactly $i$ nodal zones and showing that $\psi_{i+1}$ has one nodal domain more than $\psi_i$, for $i\ge 2$. 
\\
Let $0<s_1<\dots s_{i-1}<1$ be the internal zeros of $\psi_i$ and  $s_i=1$ such that $\psi_i(s_k)=0$ for $k=1,\dots,i$. To begin with we show that $\psi_{i+1}$ vanishes at some point inside $(s_{k-1},s_k)$ for $k=2,\dots,i$. To fix idea we take that $\psi_i(r)>0$ on $(s_{k-1},s_k)$, which also implies $\psi_i'(s_{k-1})>0$ and $\psi_i'(s_{k})<0$. If $\psi_{i+1}$ does not vanishes inside $(s_{k-1},s_k)$ we may assume without loss of generality that 
$\psi_{i+1}(r)>0$ in $(s_{k-1},s_k)$ and $\psi_{i+1}(s_{k-1}), \psi_{i+1}(s_{k})\ge 0$. Integrating \eqref{picone-identity-0} on $(s_{k-1},s_k)$  gives 
\[ s_k^{M-1}\psi_i'(s_k)\psi_{i+1}(s_k)- s_{k-1}^{M-1}  \psi'_i(s_{k-1})\psi_{i+1}(s_{k-1}) = (\nu_{i+1}-\nu_i)\int_{s_{k-1}}^{s_k} r^{M-1} \psi_i \psi_{i+1} dr. \]
But this is not possible because the l.A.s. is less or equal than zero  by the just made considerations, while the r.A.s. is strictly positive as $\nu_{i+1}>\nu_i$ by the first part of the proof.
\\
The same computation also shows that $\psi_{i+1}$ has a zero also in $(0,s_1)$, since $r^{M-1}(\psi_i'\psi_{i+1}-\psi_i\psi'_{i+1})$ clearly vanishes at $r=0$ by Proposition \ref{radial-regular-0}. In this way we have proved that $\psi_{i+1}$ has at least $i$ zeros in $(0,1)$ which gives $i+1$ nodal zones as desired.
\end{proof}

\section{The singular eigenvalue problem}\label{se:singular}

Now we address to a singular eigenvalue problem for the linear operator $L_a$. Letting $a\in L^{\infty}(B)$ as before, we consider the problem  
\begin{equation}\label{singular-eigenvalue-problem}
\left\{\begin{array}{ll}
-\Delta  \psi_i-a\psi_i =\frac{\widehat\L_i}{|x|^2}\psi_i & \text{ in } B\setminus\{0\}\\
\psi_i= 0 & \text{ on } \partial B,
\end{array} \right.
\end{equation}
which is not well defined in the space $H^1_0(B)$, in general, because of the singularity at the origin.
		It is therefore needed to 
		introduce the Lebesgue space
\begin{equation}\label{L2-pesato}
{\mathcal L}:\ \{w: \ B\to \R,  \text{ measurable and s.t } \int_B |x|^{-2}w^2\, dx<\infty\},
\end{equation}
which is a Hilbert space with the scalar product $\int_B |x|^{-2}\eta\varphi\ dx$, so that 
\begin{equation}\label{scalar-H}
\eta\underline{\perp}\varphi\  \ \Longleftrightarrow \int_B|x|^{-2}\eta\varphi\ dx=0  \  \ \text{ for }\eta, \varphi\in  {\mathcal L}.
\end{equation}
Next
\begin{equation}\label{space-H}
\mathcal{H}\ :=H^1(B)\cap{\mathcal L} \ \ \text{ and }\ \ \mathcal{H}_0\ :=H_0^1(B)\cap {\mathcal L}.
\end{equation}
are Banach spaces with the norm $\norm{w}_{\mathcal{H}}^2=\int _B |\nabla w|^2 + |x|^{-2}w^2\ dx$.
In dimension $N\geq 3$ the space $\mathcal{H}_0$ coincides with $H^1_0(B)$ due to the Hardy inequality, while in dimension $2$ the space $\mathcal{H}_0$ is strictly contained in $H^1_0(B)$, consider for example the function $w(x)=1-|x|^2$. Indeed in dimension 2 every continuous function $w\in \mathcal{H}$ satisfies $w(0)=0$ since $|x|^{-2}\notin L^1(B)$. 

Eventually by weak solution to \eqref{singular-eigenvalue-problem} we mean $\psi_i \in \mathcal{H}_0$ such that 
		\begin{equation}\label{weak-equation}
		\int_B\nabla  \psi_i\nabla \varphi\ dx-\int_B a \psi_i\varphi\ dx=\widehat \L_i\int_B \frac{ \psi_i\varphi}{|x|^2}\ dx
		\end{equation}
		for every  $\varphi\in \mathcal{H}_0$.
		We call $\widehat\L_i$ eigenvalue of \eqref{singular-eigenvalue-problem} if there exists a nontrivial function $\psi_i$ that satisfies \eqref{weak-equation}, next such $\psi_i$ will be called eigenfunction.

Analogously to the classical case one can try to produce an eigenvalue by minimizing the quotient 
\begin{equation}\label{first-singular}
\widehat \L_1:=\inf_{\substack{w\in\mathcal{H}_0 \\ w\neq 0}}\frac{Q_a(w)}{\int_B |x|^{-2}w^2(x)\, dx} 
\end{equation}
This method can fail: for instance when $a\equiv 0$ the infimum in \eqref{first-singular} is $ \left(\frac{N-2}2\right)^2$ and it is not attained due to the lack of compactness of the embedding of $H^1_0(B)$
into
${\mathcal L}$, see \cite[Proposition 2.2]{GGN2}. Anyway as soon as the infimum in \eqref{first-singular} is strictly less than  $\left(\frac{N-2}2\right)^2$ 
the compactness is restored and  we have

\begin{proposition}\label{p-lambda-1}
When $\widehat \L_1<\left(\frac{N-2}2\right)^2$ then $\widehat \L_1$ is attained at a function $ \psi_1\in \mathcal H_0$ which satisfies \eqref{weak-equation} and therefore is a weak solution to \eqref{singular-eigenvalue-problem}.
Next  $ \psi_1\in C^{1,\gamma}_{\loc}(\bar B\setminus\{0\})$ for some $0<\gamma<1$ and it is a classical solution to \eqref{singular-eigenvalue-problem} in $B\setminus\{0\}$ when $a\in C^{0,\beta}(B)$ for some $\beta>0$.
\end{proposition}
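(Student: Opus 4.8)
\emph{Approach.} The plan is to argue by the direct method in $\mathcal H_0$, the whole point being that the strict gap $\widehat\L_1<\left(\frac{N-2}{2}\right)^2$ is exactly what prevents mass from concentrating at the origin, where compactness of the embedding $\mathcal H_0\hookrightarrow\mathcal L$ is lost. I would fix a minimizing sequence $w_n\in\mathcal H_0$ for \eqref{first-singular}, normalized by $\int_B|x|^{-2}w_n^2\,dx=1$ and with $Q_a(w_n)\to\widehat\L_1$. Since $|x|\le 1$ on $B$ one has $\int_Bw_n^2\,dx\le\int_B|x|^{-2}w_n^2\,dx=1$, so using $a\in L^\infty(B)$,
\[
\int_B|\nabla w_n|^2\,dx=Q_a(w_n)+\int_Baw_n^2\,dx\le Q_a(w_n)+\|a\|_\infty ,
\]
whence $(w_n)$ is bounded in $H^1_0(B)$. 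Up to a subsequence $w_n\rightharpoonup\psi_1$ weakly in $H^1_0(B)$, strongly in $L^2(B)$ and a.e.\ in $B$; in particular $\psi_1\in\mathcal H_0$.

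\emph{No concentration at the origin.} Setting $v_n:=w_n-\psi_1\rightharpoonup0$, expanding the Hilbert norm gives $\int_B|\nabla w_n|^2=\int_B|\nabla\psi_1|^2+\int_B|\nabla v_n|^2+o(1)$, while $\int_Baw_n^2\to\int_Ba\psi_1^2$ since $v_n\to0$ in $L^2$; hence $Q_a(w_n)=Q_a(\psi_1)+\int_B|\nabla v_n|^2+o(1)$. On the other hand the Brezis--Lieb lemma (applicable since $w_n\to\psi_1$ a.e.\ and $(w_n)$ is bounded in $\mathcal L$) gives $\int_B|x|^{-2}w_n^2=\int_B|x|^{-2}\psi_1^2+\int_B|x|^{-2}v_n^2+o(1)$. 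Writing $\mu:=\int_B|x|^{-2}\psi_1^2\,dx$ and $\nu:=\lim_n\int_B|x|^{-2}v_n^2\,dx$ (along a further subsequence) one obtains $\mu+\nu=1$ and
\[
\widehat\L_1=Q_a(\psi_1)+\lim_n\int_B|\nabla v_n|^2\,dx .
\]
Now the definition of $\widehat\L_1$ yields $Q_a(\psi_1)\ge\widehat\L_1\mu$, while the Hardy inequality $\int_B|\nabla v_n|^2\ge\left(\frac{N-2}{2}\right)^2\int_B|x|^{-2}v_n^2$ (whose optimal constant is precisely $\left(\frac{N-2}{2}\right)^2$, equal to $0$ when $N=2$) gives $\lim_n\int_B|\nabla v_n|^2\ge\left(\frac{N-2}{2}\right)^2\nu$. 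Combining,
\[
\widehat\L_1\ge\widehat\L_1\mu+\left(\tfrac{N-2}{2}\right)^2(1-\mu),\qquad\text{i.e.}\qquad (1-\mu)\left(\widehat\L_1-\left(\tfrac{N-2}{2}\right)^2\right)\ge0 .
\]
As the second factor is strictly negative by hypothesis, necessarily $\mu\ge1$, and since $\mu\le1$ one concludes $\mu=1$, $\nu=0$. Thus $\psi_1\neq0$, the constraint passes to the limit, and from the displayed identity together with $\lim_n\int_B|\nabla v_n|^2\ge0$ one gets $Q_a(\psi_1)\le\widehat\L_1$; the reverse inequality holds because $\psi_1$ is admissible, so $Q_a(\psi_1)=\widehat\L_1$ and the infimum is attained.

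\emph{Euler--Lagrange equation and regularity.} Differentiating $t\mapsto Q_a(\psi_1+t\varphi)\big/\int_B|x|^{-2}(\psi_1+t\varphi)^2\,dx$ at $t=0$, for arbitrary $\varphi\in\mathcal H_0$, produces \eqref{weak-equation} with $\widehat\L_1$ as Lagrange multiplier, so $\psi_1$ is a weak solution of \eqref{singular-eigenvalue-problem}. For the regularity, on every compact $K\subset\bar B\setminus\{0\}$ the coefficient $a(x)+\widehat\L_1|x|^{-2}$ is bounded, so $\psi_1$ weakly solves $-\Delta\psi_1=\bigl(a+\widehat\L_1|x|^{-2}\bigr)\psi_1$ with an $L^\infty_\loc(\bar B\setminus\{0\})$ right-hand side coefficient; the same bootstrap as in Remark \ref{regolarita} (elliptic $L^q$ estimates in the interior and up to $\partial B$ via the Dirichlet condition, followed by Morrey's embedding) yields $\psi_1\in C^{1,\gamma}_\loc(\bar B\setminus\{0\})$. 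If in addition $a\in C^{0,\beta}(B)$, then $a+\widehat\L_1|x|^{-2}\in C^{0,\beta}_\loc(B\setminus\{0\})$ and Schauder estimates upgrade this to $\psi_1\in C^{2,\beta}_\loc(B\setminus\{0\})$, so that $\psi_1$ solves \eqref{singular-eigenvalue-problem} classically in $B\setminus\{0\}$. The single delicate point is the concentration analysis: the strict inequality against the Hardy constant is both necessary and sharp, since for $a\equiv0$ the infimum equals $\left(\frac{N-2}{2}\right)^2$ and is not attained.
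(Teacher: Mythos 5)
Your proof is correct, and while it follows the same overall skeleton as the paper's (direct method, Euler--Lagrange equation by differentiating the quotient, then the standard elliptic bootstrap away from the origin), the compactness step is handled by a genuinely different mechanism. The paper normalizes the minimizing sequence by $\int_B w_n^2=1$ (so that non-triviality of the weak limit comes for free from strong $L^2$ convergence) and then proves $Q_a(w)-\widehat\L_1\int_B|x|^{-2}w^2\le 0$ by Fatou-type lower semicontinuity arguments, split into the cases $\widehat\L_1<0$ and $0\le\widehat\L_1<\left(\frac{N-2}{2}\right)^2$ (the latter only occurring for $N\ge 3$, where Hardy's inequality is also used to bound $\int_B|x|^{-2}w_n^2$). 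You instead normalize in $\mathcal L$ and run a Brezis--Lieb/profile-decomposition argument: the defect of compactness is isolated as $\lim_n\int_B|\nabla v_n|^2\ge\left(\frac{N-2}{2}\right)^2\nu$, and the strict gap $\widehat\L_1<\left(\frac{N-2}{2}\right)^2$ forces $\nu=0$. Your route buys a unified treatment of all signs of $\widehat\L_1$ and of $N=2,N\ge3$, a cleaner gradient bound (via $\int_Bw_n^2\le\int_B|x|^{-2}w_n^2=1$), and it makes transparent that the threshold is exactly the sharp Hardy constant; the paper's route avoids invoking Brezis--Lieb and stays closer to elementary semicontinuity. All the individual steps you use (weak $H^1_0$ splitting of the Dirichlet energy, strong $L^2$ convergence of the $a$-term, Fatou for $\mu\le 1$, Hardy on $H^1_0(B)$) are valid, so I see no gap.
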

This statement has been proved for $N=2$ in \cite[Proposition 2.1]{GGN2}, and for $N\ge 3$ and $\Lambda_1<0$ in \cite[Proposition 5.4]{DGG}. Here we report a unified proof which is valid also for $0\le \Lambda_1 < \left(\frac{N-2}2\right)^2$.
\begin{proof}
Let us consider a minimizing sequence $w_n\in \mathcal H_0$, normalized such that
\[\int_{B} w_n^2=1   \]
By definition
\begin{equation}\label{def-minimizzante}\int_{B}|\nabla w_n|^2-aw_n^2=\widehat \beta_n\int_{B}|x|^{-2}w_n^2\end{equation}
with $\widehat \beta_n \searrow \widehat \L_1$ as $n\to\infty$. 
We first check that
\begin{equation}\label{grad-limitato}
\int_{B}|\nabla w_n|^2\leq C.
\end{equation}
Actually if $\widehat \beta_n \le 0$ then \eqref{def-minimizzante} immediately yields 
\begin{align*}
\int_{B}|\nabla w_n|^2 \le \int_{B}aw_n^2 \leq \|a\|_{\infty} \int_{B}w_n^2  =C . 
\end{align*}
Otherwise if $\widehat \beta_n \ge 0$ definitely it means that $N\ge 3$ and we can use the Hardy inequality to get 
\[\int_{B}|\nabla w_n|^2-aw_n^2\leq \widehat \beta_n\frac 4{(N-2)^2}\int_{B}|\nabla w_n|^2 \]
which implies 
\[
\big( 1- \widehat \beta_n\frac 4{(N-2)^2}  \big)\int_{B}|\nabla w_n|^2\leq   \int_{B}aw_n^2\leq \|a\|_{\infty} \int_{B}w_n^2=C \]
and then \eqref{grad-limitato} follows because $\widehat \beta_n<\left(\frac{N-2}{2}\right)^2$. 
By \eqref{grad-limitato} it follows that, up to a subsequence, $w_n \rightharpoonup w$ weakly in $H^1_0(B)$ and strongly in $L^2(B)$, in particular
\begin{equation}
\label{pippo} \lim\limits_{n\to\infty} \int_B a w_n^2 = \int_{B}aw^2 .
\end{equation}
Moreover 
\begin{equation}	\label{pluto} \int_{B} |x|^{-2} w_n^2 \le C .	\end{equation}	
Actually if $\Lambda_1 \neq 0 $ this follows because by \eqref{def-minimizzante}
\[
\int_{B}|x|^{-2}w_n^2 = \frac{1}{\widehat\beta_n }\int_{B}|\nabla w_n|^2-aw_n^2 \le \frac{1}{|\widehat\beta_n|}\int_{B}|\nabla w_n|^2 + \frac{\|a\|_{\infty}}{|\widehat\beta_n|}\int_{B} w_n^2 \le C
\]
by \eqref{grad-limitato}, \eqref{pippo}.
Otherwise if $\Lambda_1=0$, then necessarily $N\ge 3$ and  Hardy inequality implies that
\[\int_{B}|x|^{-2}w_n^2\leq \frac 4{(N-2)^2}\int_{B}|\nabla w_n|^2\leq C\]
by \eqref{grad-limitato}.

Next we check that $w$ minimizes the quotient in \eqref{first-singular}, namely
\begin{equation}
\label{paperino}  \int_{B}|\nabla w|^2-aw^2 - \widehat \L_1\int_{B}|x|^{-2}w^2\leq 0 .
\end{equation}
When $\widehat \L_1 <0$ it suffices to apply Fatou's Lemma and get 
\[\begin{split}
&    \int_{B}|\nabla w|^2-aw^2 - \widehat \L_1\int_{B}|x|^{-2}w^2\leq \liminf_{n\to \infty} \Big(\int_B|\nabla w_n|^2- \beta_n |x|^{-2} w_n^2 \Big)- \int_{B}aw^2\\
&= \liminf_{n\to \infty} \int_{B}a w_n^2- \int_{B}aw^2= 0
\end{split}
\]
by \eqref{pippo}. Otherwise, when $\widehat \L_1 \ge 0$ (and therefore $N\ge 3$) we have that
\[\begin{split}
\int_{B}|\nabla w|^2-aw^2 - \widehat \L_1\int_{B}|x|^{-2}w^2\underset{\text{Fatou and \eqref{pippo}}}{\leq}
\\ \liminf_{n\to \infty} \Big(\int_B|\nabla w_n|^2- a w_n^2 \Big)-\widehat \L_1 \int_{B}|x|^{-2}w^2
= \liminf_{n\to \infty} \beta_n\int_{B}|x|^{-2}w_n^2-\widehat \L_1 \int_{B}|x|^{-2}w^2 \\ \le  
\liminf_{n\to \infty} (\beta_n-\widehat \L_1)\int_{B}|x|^{-2}w_n^2
+\widehat \L_1\, \Big(\liminf_{n\to \infty} \int_{B}|x|^{-2}w_n^2 -  \int_{B}|x|^{-2}w^2 \Big) \\
\underset{\text{Fatou}}{\leq}\liminf_{n\to \infty} (\beta_n-\widehat \L_1)\int_{B}|x|^{-2}w_n^2 \underset{\eqref{pluto}}{\leq} C  \lim_{n\to \infty} (\beta_n-\widehat \L_1) =0 .
\end{split}
\]
Eventually, as $w$ minimizes  the quotient in \eqref{first-singular}, for any $\phi\in \mathcal{H}_0$ the function
\[F(t):=\frac{Q_a(w+t \phi  )}{\int_B |x|^{-2}  \left( w+t \phi\right)^2 \, dx}\]
has a minimum at $t=0$ and this implies that $w=\psi_1$ is a weak solution to \eqref{weak-equation}.

Next standard elliptic estimates give the stated regularity as in Remark \ref{regolarita}. \end{proof}

When $\widehat \L_1<\left(\frac{N-2}2\right)^2$ and it is attained at a function $\psi_1\in \mathcal{H}_0$ we can define
\[
\widehat \L_2:=\inf_{\substack{w\in\mathcal{H}_0\ w\neq 0 \\ w\underline{\perp} \psi_1}}\frac{Q_a(w)}{\int_B |x|^{-2}w^2(x)\, dx}.  
\]
Iteratively if $\widehat \L_{i-1}<\left(\frac{N-2}{2}\right)^2$ and $\psi_{i-1}$ is an eigenfunction that attains it, we can define
\begin{equation}\label{i-singular}
\widehat \L_{i}:=\inf_{\substack{\ \ w\in\mathcal{H}_0\; w\neq 0 \\{ w \underline{\perp} \{\psi_1,\dots,\psi_{i-1}\} }}}\frac{Q_a(w)}{\int_B |x|^{-2}w^2(x)\, dx} 
\end{equation}
These numbers $\widehat \L_{i}<\left(\frac{N-2}{2}\right)^2$ are eigenvalues of \eqref{singular-eigenvalue-problem} indeed we have 

\begin{proposition}\label{prop-autov-successivi}
Whenever the value $\widehat \L_{i}$ defined in \eqref{i+1-singular} satisfies $\widehat \L_{i}<\left(\frac{N-2}{2}\right)^2$ then it is attained at a function $ \psi_{i}\in \mathcal{H}_0$, which is a weak solution to \eqref{singular-eigenvalue-problem} corresponding to $\widehat \L_{i}$, satisfies $\psi_{i}\underline{\perp}\{\psi_1,\dots,\psi_{i-1}\}$ and has the same regularity as $\psi_1$.
\end{proposition}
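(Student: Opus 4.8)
The plan is to run the direct method exactly as in the proof of Proposition~\ref{p-lambda-1} (the case $i=1$), upgrading the argument to accommodate the orthogonality constraints. I would argue by induction, taking as inductive hypothesis that $\psi_1,\dots,\psi_{i-1}\in\mathcal H_0$ have already been produced, that each $\psi_k$ attains $\widehat\L_k$ and weakly solves \eqref{weak-equation}, and that the family $\{\psi_1,\dots,\psi_{i-1}\}$ is mutually orthogonal in $\mathcal L$ (which is guaranteed by the iterative construction preceding the statement). I would then pick a minimizing sequence $w_n\in\mathcal H_0$ for the quotient in \eqref{i+1-singular}, normalized by $\int_B w_n^2=1$ and satisfying $w_n\underline{\perp}\{\psi_1,\dots,\psi_{i-1}\}$, so that, as in \eqref{def-minimizzante}, $Q_a(w_n)=\widehat\beta_n\int_B|x|^{-2}w_n^2$ with $\widehat\beta_n\searrow\widehat\L_i$. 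Neither the gradient bound $\int_B|\nabla w_n|^2\le C$ nor the bound $\int_B|x|^{-2}w_n^2\le C$ uses the orthogonality condition, so both follow verbatim from the computations in Proposition~\ref{p-lambda-1} (distinguishing $\widehat\L_i<0$, which is elementary, from $0\le\widehat\L_i<\left(\frac{N-2}{2}\right)^2$, where the Hardy inequality and the hypothesis $\widehat\L_i<\left(\frac{N-2}{2}\right)^2$ are exactly what give the gradient bound).

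With these bounds I would extract, along a subsequence, a limit $w$ with $w_n\rightharpoonup w$ weakly in $H^1_0(B)$, strongly in $L^2(B)$ and a.e., so that $\int_B a\,w_n^2\to\int_B a\,w^2$; Fatou's lemma applied to $\int_B|x|^{-2}w_n^2\le C$ shows $w\in\mathcal L$, hence $w\in\mathcal H_0$. The genuinely new point is that $w$ must inherit the constraints $w\underline{\perp}\psi_j$, and this is where I expect the main obstacle to lie: strong $L^2$ convergence tells us nothing about the weighted integrals $\int_B|x|^{-2}w_n\psi_j\,dx$ because of the singular weight. I would resolve this by observing that $\int_B|x|^{-2}w_n^2\le C$ means $|x|^{-1}w_n$ is bounded in $L^2(B)$, while $|x|^{-1}w_n\to|x|^{-1}w$ a.e.; the standard fact that an $L^2$-bounded sequence converging almost everywhere converges weakly in $L^2$ to its a.e.\ limit then yields $|x|^{-1}w_n\rightharpoonup|x|^{-1}w$ in $L^2(B)$, i.e.\ $w_n\rightharpoonup w$ in $\mathcal L$. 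Testing this weak convergence against $\psi_j\in\mathcal L$ gives $0=\int_B|x|^{-2}w_n\psi_j\to\int_B|x|^{-2}w\psi_j$, so $w\underline{\perp}\psi_j$ for $j=1,\dots,i-1$ and $w$ is an admissible competitor.

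Next I would show that $w$ attains the infimum, i.e.\ the analogue of \eqref{paperino},
\[
\int_B|\nabla w|^2-a(x)w^2-\widehat\L_i\int_B|x|^{-2}w^2\le 0 ,
\]
by repeating word for word the two Fatou arguments of Proposition~\ref{p-lambda-1} according to the sign of $\widehat\L_i$. Since $w$ is admissible and $\int_B w^2=1$ forces $w\neq 0$, the quotient at $w$ is $\le\widehat\L_i$, while by definition it is $\ge\widehat\L_i$; hence $w=\psi_i$ attains $\widehat\L_i$ and $\psi_i\underline{\perp}\{\psi_1,\dots,\psi_{i-1}\}$.

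It remains to derive the weak equation \eqref{weak-equation}. Because of the constraints one cannot vary freely, so I would either test with the projected variations $\tilde\varphi=\varphi-\sum_j\frac{\int_B|x|^{-2}\varphi\psi_j}{\int_B|x|^{-2}\psi_j^2}\,\psi_j$ or, equivalently, write the Euler--Lagrange equation with Lagrange multipliers: for every $\varphi\in\mathcal H_0$,
\[
\int_B\nabla\psi_i\nabla\varphi-a\psi_i\varphi\,dx=\widehat\L_i\int_B|x|^{-2}\psi_i\varphi\,dx+\sum_{j=1}^{i-1}\mu_j\int_B|x|^{-2}\psi_j\varphi\,dx .
\]
The final step is to show $\mu_j=0$. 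Testing with $\varphi=\psi_j$ and using the weak equation satisfied by $\psi_j$ together with $\psi_i\underline{\perp}\psi_j$, both the left-hand side and the $\widehat\L_i$-term vanish; the mutual orthogonality of $\psi_1,\dots,\psi_{i-1}$ in $\mathcal L$ then isolates the single surviving term $\mu_j\int_B|x|^{-2}\psi_j^2$, forcing $\mu_j=0$. Hence $\psi_i$ solves \eqref{weak-equation}, and the stated $C^{1,\gamma}_{\loc}(\bar B\setminus\{0\})$ regularity (and classical solvability when $a\in C^{0,\beta}$) follows from the elliptic estimates invoked in Remark~\ref{regolarita} and at the end of Proposition~\ref{p-lambda-1}, exactly as in the case $i=1$.
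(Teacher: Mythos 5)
Your proposal is correct and follows essentially the same route as the paper: the paper's proof also reruns the bounds of Proposition \ref{p-lambda-1}, uses the uniform bound \eqref{pluto} to get weak convergence in $\mathcal{L}$ (via Banach--Alaoglu, identified with your a.e.-limit argument) so that the orthogonality constraints pass to the limit, and then concludes "as before" that the infimum is attained and the weak equation holds. Your explicit Lagrange-multiplier step, showing the multipliers vanish by testing with $\psi_j$ and using its own weak equation together with $\psi_i\underline{\perp}\psi_j$, is exactly the detail the paper leaves implicit, and it is carried out correctly.
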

\begin{proof}
Reasoning as in the proof of Proposition \ref{p-lambda-1} one can see that a minimizing sequence $w_n$ converges to a function $w$  weakly in $H^1_0(B)$, strongly in $L^2(B)$ and pointwise a.e. (up to a subsequence). Moreover \eqref{pluto} assures that there exists a subsequence of
$w_n$, that we denote $w_{n_k}$ that converges to $w$ also weakly in ${\mathcal L}$, via the Banach-Alaoglu Theorem.  Then we have \[0=\lim_{k\to \infty}\int_B |x|^{-2}w_{n_k} \psi_j=\int_B |x|^{-2}w\psi_j\]
for $j=1,\dots,i-1$, meaning that $w\underline{\perp}\{\psi_1,\dots,\psi_{i-1}\}$. Then, as before it follows that $   \widehat\Lambda_i$ is attained and $w=\psi_i$ is a weak solution to \eqref{singular-eigenvalue-problem} corresponding to $\widehat \L_i$ in the sense of \eqref{weak-equation}.
\end{proof}

\noindent By Proposition \ref{prop-autov-successivi} the numbers  $ \widehat \L_{i}$ defined in \eqref{i+1-singular} are eigenvalues of problem \eqref{singular-eigenvalue-problem} in the sense that there exists a weak solution to \eqref{singular-eigenvalue-problem} corresponding to $ \widehat \L_{i}$ when they satisfy $\widehat \L_{i}< \left(\frac{N-2}{2}\right)^2$. Also the converse is true indeed it is easy to prove that 

\begin{lemma}[Variational characterization] \label{lem-var} 
	The eigenvalues of problem  \eqref{singular-eigenvalue-problem}
	which are less than $\left(\frac{N-2}{2}\right)^2$ 
	coincide with the numbers  $\widehat \L_i$  defined in \eqref{i+1-singular}.
\end{lemma}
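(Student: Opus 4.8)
The inclusion asserting that every variational value $\widehat\L_i<\left(\frac{N-2}{2}\right)^2$ is an eigenvalue of \eqref{singular-eigenvalue-problem} is exactly the content of Proposition \ref{prop-autov-successivi}, so the plan is to prove only the reverse inclusion: that any eigenvalue $\widehat\L<\left(\frac{N-2}{2}\right)^2$ of \eqref{singular-eigenvalue-problem} must coincide with one of the $\widehat\L_i$.

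The first step is to record the orthogonality of eigenfunctions belonging to distinct eigenvalues, the exact analogue of Property 2 but in the weighted space $\mathcal{L}$. Given an eigenfunction $\psi\in\mathcal{H}_0$ for $\widehat\L$ and an eigenfunction $\psi_i\in\mathcal{H}_0$ for $\widehat\L_i$, I would insert $\psi_i$ as a test function in the weak formulation \eqref{weak-equation} for $\psi$, and $\psi$ as a test function in \eqref{weak-equation} for $\psi_i$; subtracting the two identities yields $(\widehat\L-\widehat\L_i)\int_B |x|^{-2}\psi\psi_i\,dx=0$, so that $\psi\,\underline{\perp}\,\psi_i$ whenever $\widehat\L\neq\widehat\L_i$. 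This cross-testing is legitimate precisely because both functions lie in $\mathcal{H}_0$, which is the admissible class in \eqref{weak-equation}.

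The heart of the argument is then a min-max comparison. Arguing by contradiction, suppose $\widehat\L\neq\widehat\L_i$ for every $i$, and let $k$ be the number of indices with $\widehat\L_i<\widehat\L$ (with the convention $\widehat\L_0=-\infty$, so that $\widehat\L_k<\widehat\L<\widehat\L_{k+1}$). Since $\widehat\L_i<\widehat\L<\left(\frac{N-2}{2}\right)^2$ for $i\le k$, Proposition \ref{prop-autov-successivi} guarantees that $\widehat\L_1,\dots,\widehat\L_k$ are attained at eigenfunctions $\psi_1,\dots,\psi_k$ and that $\widehat\L_{k+1}$ is well defined by \eqref{i+1-singular}. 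By the orthogonality just established, $\psi\,\underline{\perp}\,\psi_1,\dots,\psi_k$, so $\psi$ is an admissible competitor in the infimum defining $\widehat\L_{k+1}$. Testing the weak equation \eqref{weak-equation} for $\psi$ against $\psi$ itself gives $Q_a(\psi)=\widehat\L\int_B |x|^{-2}\psi^2\,dx$, so the Rayleigh quotient of $\psi$ equals $\widehat\L$; by definition of the infimum this forces $\widehat\L_{k+1}\le\widehat\L$, contradicting $\widehat\L<\widehat\L_{k+1}$. Hence $\widehat\L=\widehat\L_i$ for some $i$.

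I expect the only delicate point to be bookkeeping rather than analysis: one must be sure the iterative construction in \eqref{i+1-singular} actually reaches index $k+1$, and this is exactly where the hypothesis $\widehat\L<\left(\frac{N-2}{2}\right)^2$ enters, since it keeps $\widehat\L_k$ below the compactness threshold so that $\psi_k$ exists and $\widehat\L_{k+1}$ is defined. No compactness or regularity beyond Proposition \ref{prop-autov-successivi} is required, and the threshold plays a role only through this attainment/definition mechanism.
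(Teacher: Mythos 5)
Your overall strategy is the natural one, and since the paper never actually proves Lemma \ref{lem-var} (it is asserted right after Proposition \ref{prop-autov-successivi} with the remark that "it is easy to prove"), there is no authorial argument to compare against. The two ingredients you use --- the cross-testing orthogonality in $\mathcal L$ of eigenfunctions belonging to distinct eigenvalues, and the min--max comparison showing that $\psi$ is an admissible competitor for $\widehat\L_{k+1}$ with Rayleigh quotient exactly $\widehat\L$ --- are correct and are surely what the authors had in mind. You also correctly identify that the threshold $\left(\frac{N-2}{2}\right)^2$ enters only through the attainment mechanism of Proposition \ref{prop-autov-successivi}.

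There is, however, one tacit assumption that needs justification: when you set $k$ equal to \emph{the number} of indices with $\widehat\L_i<\widehat\L$, you are assuming this number is finite. Nothing you cite rules out an infinite nondecreasing sequence $\widehat\L_1\le\widehat\L_2\le\cdots$ accumulating below $\widehat\L$; in that case no index $k+1$ exists and no contradiction materialises, since $\widehat\L_{i+1}\le\widehat\L$ for every $i$ is perfectly consistent with your hypothesis. The gap is easy to close but should be closed. If $N=2$ the threshold is $0$, so all $\widehat\L_i<\widehat\L$ are negative and Proposition \ref{prop-prel-1} bounds their number by the finite number of negative regular eigenvalues. If $N\ge 3$, normalise the corresponding eigenfunctions by $\int_B|x|^{-2}\psi_i^2\,dx=1$; then $Q_a(\psi_i)=\widehat\L_i\le\widehat\L$ together with $|x|\le 1$ gives a uniform bound in $H^1_0(B)$, and Hardy's inequality yields
\[
\left(\frac{N-2}{2}\right)^2\le \int_B|\nabla\psi_i|^2\,dx=\widehat\L_i+\int_B a\,\psi_i^2\,dx\le \widehat\L+\int_B a\,\psi_i^2\,dx ,
\]
so that $\int_B a\,\psi_i^2\,dx\ge \left(\frac{N-2}{2}\right)^2-\widehat\L>0$ for every $i$. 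On the other hand $\mathcal L$-orthonormality forces $\psi_i\rightharpoonup 0$ in $\mathcal L$, hence (testing against $|x|^2\varphi$ with $\varphi\in C^\infty_0(B)$ and using the strong $L^2$ convergence of a subsequence) $\psi_i\to 0$ in $L^2(B)$ and $\int_B a\,\psi_i^2\,dx\to 0$, a contradiction. With that finiteness in hand your argument is complete.
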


Moreover these eigenvalues $\widehat \L_{i}$ satisfy: 

\

\noindent {\bf{Property 1.}} {\em{
	The first eigenvalue $\widehat \L_1$ is simple and the first eigenfunction $ \psi_1$ is strictly positive (or negative) in $B\setminus\{0\}$.}}

\

\noindent {\bf{Property 2.}} {\em{Eigenfunctions related to different eigenvalues $\widehat \L_i$ are orthogonal in $\mathcal L$.}}

\

This implies that any eigenfunction related to an eigenvalue $\widehat\L_i>\widehat\L_1$  changes sign in $B$ (recall that $ \psi_1>0$ in $B\setminus\{0\}$). Next

\

\noindent {\bf{Property 3.}} {\em{The $i$-th eigenfunction $ \psi_i$  has at most $i$ nodal domains.}}

\

The proof of these properties readily follows as in the case of the standard eigenvalues which is reported in Section \ref{se-prel}.

 The following relation links the negative eigenvalues of \eqref{eigenvalue-problem} and \eqref{singular-eigenvalue-problem}; we will use it in the sequel in order to characterize the Morse index of solutions to problem \eqref{general-f}.

\begin{proposition}\label{prop-prel-1}
The number of negative eigenvalues $\L_i$ according to \eqref{Rayleigh-var}  coincides with the number of negative eigenvalues $ \widehat \L_{i}$ defined in \eqref{i+1-singular} corresponding to the same  $a(x)$.
\end{proposition}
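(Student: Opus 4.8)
The plan is to characterize each of the two counts as the \emph{Morse index of the quadratic form} $Q_a$ on the relevant space, and then to show that these two Morse indices agree. Set
\[\mu:=\max\{\dim V : V\subset H^1_0(B),\ Q_a<0 \text{ on } V\setminus\{0\}\},\qquad \widehat\mu:=\max\{\dim V : V\subset\mathcal{H}_0,\ Q_a<0 \text{ on } V\setminus\{0\}\}.\]
That the number of negative eigenvalues $\L_i$ equals $\mu$ is the classical variational (Courant--Fischer) characterization; the same argument given below for the singular problem, run with $L^2$-orthogonality, reproves it. It therefore suffices to prove that the number $\widehat N_-$ of negative singular eigenvalues equals $\widehat\mu$, and that $\mu=\widehat\mu$.

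First I would establish $\widehat N_-=\widehat\mu$. Since each negative $\widehat\L_i$ lies below the threshold $\left(\frac{N-2}2\right)^2$, Proposition \ref{prop-autov-successivi} guarantees it is attained at an eigenfunction $\widehat\psi_i\in\mathcal H_0$. Testing \eqref{weak-equation} with $\widehat\psi_j$ and invoking the $\mathcal L$-orthogonality of eigenfunctions (Property 2) shows that the bilinear form polarizing $Q_a$ is diagonalized by $\widehat\psi_1,\dots,\widehat\psi_{\widehat N_-}$, with $Q_a(\widehat\psi_i)=\widehat\L_i\int_B|x|^{-2}\widehat\psi_i^2<0$; hence $Q_a$ is negative definite on their span and $\widehat\mu\ge\widehat N_-$. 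For the reverse inequality I would first note that $\widehat\L_{\widehat N_-+1}\ge 0$: were it negative it would satisfy $\widehat\L_{\widehat N_-+1}<\left(\frac{N-2}2\right)^2$ and hence be attained by Proposition \ref{prop-autov-successivi}, producing one more negative eigenvalue, a contradiction. Consequently $Q_a(w)\ge \widehat\L_{\widehat N_-+1}\int_B|x|^{-2}w^2\ge0$ for every $w\in\mathcal H_0\setminus\{0\}$ with $w\,\underline{\perp}\,\widehat\psi_1,\dots,\widehat\psi_{\widehat N_-}$. Given any $V\subset\mathcal H_0$ with $\dim V=\widehat N_-+1$, the $\widehat N_-$ linear conditions $\int_B|x|^{-2}w\,\widehat\psi_j\,dx=0$ admit a nonzero common solution $w_0\in V$, whence $Q_a(w_0)\ge0$, contradicting negative definiteness of $V$. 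Thus $\widehat\mu\le\widehat N_-$, giving equality.

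It remains to prove $\mu=\widehat\mu$. Since $\mathcal H_0\subset H^1_0(B)$ one trivially has $\widehat\mu\le\mu$, and when $N\ge3$ Hardy's inequality gives $\mathcal H_0=H^1_0(B)$, so $\widehat\mu=\mu$ at once. The only genuine obstacle is $N=2$, where $\mathcal H_0\subsetneq H^1_0(B)$: the regular negative eigenspace $V_0=\mathrm{span}(\psi_1,\dots,\psi_\mu)$ is a $\mu$-dimensional negative subspace of $H^1_0(B)$, but its elements need not lie in $\mathcal H_0$ (a function with $\psi(0)\neq0$ fails $\int_B|x|^{-2}\psi^2\,dx<\infty$). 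To push $V_0$ into $\mathcal H_0$ I would use the logarithmic cut-off $\eta_\e$, equal to $0$ for $|x|\le\e^2$, to $1$ for $|x|\ge\e$, and interpolating by $\log(|x|/\e^2)/\log(1/\e)$ in between, which satisfies $\int_B|\na\eta_\e|^2\,dx=2\pi/\log(1/\e)\to0$. Each $\eta_\e\psi_i$ vanishes near the origin, hence lies in $\mathcal H_0$; and because the $\psi_i\in C^{1,\beta}(\bar B)$ are bounded (Remark \ref{regolarita}), one checks $\|\eta_\e\psi_i-\psi_i\|_{H^1_0}\to0$, the delicate term being $\|\psi_i\na\eta_\e\|_{L^2}^2\le \|\psi_i\|_\infty^2\,2\pi/\log(1/\e)\to0$. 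Since $Q_a$ is continuous on $H^1_0(B)$ and, by finite-dimensionality, $\eta_\e w\to w$ in $H^1_0(B)$ uniformly for $w$ in the compact unit sphere of $V_0$, for $\e$ small the map $w\mapsto\eta_\e w$ is injective on $V_0$ and $Q_a<0$ on $(\eta_\e V_0)\setminus\{0\}$; hence $\widehat\mu\ge\mu$, so $\widehat\mu=\mu$.

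Combining the three identities yields that the number of negative $\L_i$ equals $\mu=\widehat\mu$ equals $\widehat N_-$, which is the claim (finiteness of all quantities being inherited from $\mu=N_-<\infty$). The routine inputs are the diagonalization and dimension-counting computations; the step demanding real care is the two-dimensional truncation, where one must ensure the cut-off preserves both the dimension and the \emph{uniform} negativity of $Q_a$ over the eigenspace $V_0$. This is precisely where the boundedness of the regular eigenfunctions and the decay $\int_B|\na\eta_\e|^2\,dx\to0$ of the logarithmic cut-off are used.
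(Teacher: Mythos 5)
Your proof is correct and follows essentially the same route as the paper, which only sketches the argument (negative definiteness of $Q_a$ on both the regular and the singular negative eigenspaces forces the two dimensions to agree) and defers the details to Lemma 2.6 of \cite{GGN2} and Lemma 5.6 of \cite{DGG}. Your write-up supplies exactly those details --- the identification of both counts with the Morse index of $Q_a$, and, for $N=2$ where $\mathcal H_0\subsetneq H^1_0(B)$, the logarithmic cut-off needed to push the regular negative eigenspace into $\mathcal H_0$ --- and all the steps check out.
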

\begin{proof}
This result is proved in Lemma 2.6 in \cite{GGN2} for the case of dimension $2$ and in Lemma 5.6 in \cite{DGG} for higher dimensions and uses the fact that the quadratic form $\mathcal Q_a$ in \eqref{forma-quadratica-a} is negative defined both in the space spanned by the singular eigenfunctions related to negative singular eigenvalues $\widehat \psi_1,\dots,\widehat \psi_K$ or in the space spanned by the regular eigenfunction $ \psi_1,\dots,\psi_H$ related to negative eigenvalues. This imply that $H=K$.
\end{proof}

Observe that this result is true as far as one consider the eigenvalues which are strictly negative and cannot be true for the zero eigenvalue as observed in  \cite[Lemma 3.6]{GI} in the case of dimension 2.
\begin{remark}\label{faggio}Proposition \ref{prop-prel-1} together with the
variational characterization of the standard eigenvalues $\L_i$ in \eqref{Rayleigh-var} imply that the number of negative singular eigenvalues $\widehat \L_i$ coincides with the maximal dimension of a subspace of $H^1_0(B)$ in which the quadratic form $\mathcal Q_a$  in \eqref{forma-quadratica-a} is negative defined. \end{remark}
In dimension $N\ge 3$ a similar relation holds also for the eigenvalue zero. This fact will be useful to characterize the degeneracy of a solution to a semilinear PDE.
\begin{proposition}\label{prop-prel-0}
When $N\ge 3$, $\Lambda_i=0$ if and only if $ \widehat \L_{i}=0$ for the same index $i$.
\end{proposition}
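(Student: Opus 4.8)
The plan is to reduce everything to the fact that, for the value zero, the two eigenvalue problems literally coincide. First I would record that in dimension $N\ge 3$ the Hardy inequality gives $\mathcal{H}_0 = H^1_0(B)$, so the function spaces (and hence the test-function spaces) underlying \eqref{eigenvalue-problem} and \eqref{singular-eigenvalue-problem} are the same. The crucial observation is then that the weak equation \eqref{weak-equation} with $\widehat\L_i = 0$ reads $\int_B \nabla w \nabla \varphi - a(x) w \varphi \, dx = 0$ for all $\varphi \in \mathcal{H}_0 = H^1_0(B)$, which is exactly the weak form of \eqref{eigenvalue-problem} with $\L_i = 0$: the singular weight $|x|^{-2}$ disappears precisely because the eigenvalue is zero. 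Consequently the zero-eigenspace of the singular problem and the zero-eigenspace (the kernel of $L_a$) of the standard problem are one and the same subspace of $H^1_0(B)$; I would call its dimension $d$.

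Next I would count indices. Since $0 < \left(\frac{N-2}{2}\right)^2$ for $N\ge 3$, every nonpositive eigenvalue lies below the compactness threshold, so Propositions \ref{p-lambda-1} and \ref{prop-autov-successivi} guarantee that the iterative variational scheme \eqref{i+1-singular} reaches and attains all negative singular eigenvalues and then the zero eigenvalue, the latter appearing exactly $d$ times (its multiplicity equals $\dim \ker L_a = d$ by the previous paragraph, the eigenfunctions associated with distinct eigenvalues being $\mathcal L$-orthogonal). On the standard side the zero eigenvalue likewise appears exactly $d$ times. By Proposition \ref{prop-prel-1} the number of strictly negative eigenvalues is the same integer $k$ for both problems. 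Because in each problem the eigenvalues are listed in nondecreasing order, the first $k$ entries are negative and the following $d$ entries are zero; that is, $\L_i = 0$ and $\widehat\L_i = 0$ both hold exactly for $k+1 \le i \le k+d$. If $d=0$ neither eigenvalue vanishes and the statement is vacuously true. This yields $\L_i = 0 \iff \widehat\L_i = 0$ for the same index $i$.

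The step that needs the most care is the multiplicity and indexing bookkeeping in the singular setting: one must be sure that the variational list \eqref{i+1-singular} neither skips the zero eigenvalue nor records it with the wrong multiplicity. This is exactly where the bound $\widehat\L_i < \left(\frac{N-2}{2}\right)^2$ is indispensable, since it is what makes the minimizing sequences compact (as in the proof of Proposition \ref{p-lambda-1}), so that each infimum down to and including the value zero is genuinely attained, and it permits the same argument used in Proposition \ref{prop-prel-1}, via the negative-definiteness of $Q_a$ on the relevant spans, to match the geometric multiplicity of zero with the number of times it occurs in the list. Everything else is the ordering argument already employed for the negative eigenvalues.
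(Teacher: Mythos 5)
Your proof is correct, but it is organized quite differently from the paper's. The paper argues index by index: assuming $\L_i=0$ and $\L_{i-1}<0$, it takes the standard eigenfunction $\psi_i$, proves that $\psi_i\underline{\perp}\widehat W$ (the span of the negative singular eigenfunctions) by writing $\psi_i=v+w$ with $v\in\widehat W$, $w\underline{\perp}\widehat W$ and deriving the identity $\mathcal{Q}_a(v)=\mathcal{Q}_a(w)$ from the weak equation, which is incompatible with $\mathcal{Q}_a(v)<0\le \mathcal{Q}_a(w)$ unless $v=0$; it then concludes that $\psi_i$ is an admissible competitor realizing $\widehat\L_i=0$, treats a second zero eigenvalue by an explicit Gram--Schmidt step $\widetilde\psi_{i+1}=\psi_{i+1}-k\psi_i$, and invokes symmetry for the converse. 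You instead observe that the two weak formulations literally coincide when the eigenvalue is zero (so the two zero-eigenspaces are the same subspace of $H^1_0(B)=\mathcal{H}_0$) and reduce the whole statement to a multiplicity count, using Proposition \ref{prop-prel-1} for the negative part. What your route buys is symmetry for free -- there is no separate converse to check -- and it replaces the paper's $\mathcal{Q}_a(v)=\mathcal{Q}_a(w)$ decomposition trick by the simpler Property 2 (testing the two weak equations against each other), since a kernel element lies in $\mathcal{H}_0$ and is therefore automatically $\mathcal{L}$-orthogonal to every singular eigenfunction with a negative eigenvalue.

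The one place where you should be more explicit is the ``at least $d$ times'' half of your multiplicity claim for the singular list: you must rule out that, after producing the $k$ negative eigenvalues and $d'<d$ zeros, the next infimum in \eqref{i+1-singular} is already positive. The argument is short but not automatic: take $\phi\in\ker L_a$ outside the span of the $d'$ zero eigenfunctions already listed; it is $\underline{\perp}$ to the negative eigenfunctions by Property 2, and after subtracting its $\mathcal{L}$-projections onto the listed zero eigenfunctions (the result stays in $\ker L_a$ by linearity, and stays nonzero) it becomes an admissible competitor with $\mathcal{Q}_a=0$, forcing $\widehat\L_{k+d'+1}\le 0$; combined with Proposition \ref{prop-prel-1}, which caps the number of negative entries at $k$, this gives $\widehat\L_{k+d'+1}=0$. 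This is precisely the Gram--Schmidt step the paper writes out; with it inserted, your counting argument closes completely.
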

\begin{proof}
Let us first remark that when $N\ge 3$ ${\mathcal H}_0= H^1_0(B)$ by Hardy inequality. 
So if $\Lambda_1=0$ (respectively, $\widehat{\L}_1=0$), then 
\[\inf\limits_{H^1_0(B)} {\mathcal Q}_a =0 ,\]
which also means that $\widehat\L_1=0$ (resp., $\Lambda_1=0$), so that both $\L_1=0$ and $\widehat\L_1=0$.
\\	 If $i\ge 2$, $\L_i=0$  and $\L_{i-1}<0$,  we denote by $\psi_i$ the eigenfunction realizing the minimum in \eqref{Rayleigh-var} for $\Lambda_i$  and by $\widehat W$ the subspace generated by the first $i-1$ eigenfunctions of \eqref{i+1-singular}, that we denote by now as  $\widehat\psi_i, \dots, \widehat\psi_{i-1}$, and  that we can choose to be orthogonal in ${\mathcal L}$.
By the variational characterization of the eigenfunctions 
\begin{equation}\label{auto-0-weak} \int_B \nabla \psi_i \, \nabla \varphi dx - \int_B a\,  \psi_i \, \varphi dx =0 
\end{equation}	for any $\varphi\in H^1_0(B)$. 
Moreover the quadratic form $\mathcal{Q}_a$ is negative defined on $\widehat W$. Indeed if $\widehat \psi=\sum_{j=1}^{i-1}a_j \widehat\psi_j$ then
	\[\begin{split}
	Q_a(\widehat \psi)&=\int_B |\nabla \widehat \psi|^2-a\widehat \psi^2
	= \sum_{j=1}^{i-1}a_j^2 \int_B |\nabla  \widehat\psi_j|^2-a \widehat\psi_j^2+\\
	&\sum_{j,k=1 \ j\neq k}^{i-1}a_ja_k \int_B \nabla \widehat\psi_j  \nabla  \widehat\psi_k -a \widehat\psi_j \widehat\psi_k
	\end{split}\]
	Using the weak formulation \eqref{weak-equation} we have
	\[\begin{split}
	Q_a(\widehat \psi)&=\sum_{j=1}^{i-1}a_j^2\widehat\Lambda_j\int_B|x|^{-2} \widehat\psi_j^2+\sum_{j,k=1 \ j\neq k}^{i-1}a_ja_k \widehat\Lambda_j\int_B|x|^{-2} \widehat\psi_j \widehat\psi_k\\
	&=\sum_{j=1}^{i-1}a_j^2\widehat\Lambda_j\int_B|x|^{-2} \widehat\psi_j^2<0
	\end{split}\]
	by the orthogonality conditions and since $ \widehat\Lambda_j<0$ for any $j$ by Proposition \ref{prop-prel-1}. 
Further, again by Proposition \ref{prop-prel-1}
\[\inf\left\{ \mathcal{Q}_a(w) \, : \,  w\in\mathcal{H}_0 , \,  w\neq 0 \, \,  w\underline{\perp} \widehat W \right\} \ge 0 = \mathcal{Q}_a(\psi_i). \]
If $\psi_i\underline{\perp} \widehat W$ it follows that $\psi_i$ realizes the minimum and therefore it is also an eigenfunction for $\widehat{\L}_i=0$.
On the other hand if $\psi_i$ is not orthogonal to $\widehat W$ (according to \eqref{scalar-H}) we could write $\psi_i= v +  w$ with $v \in \widehat W \setminus\{0\}$ and $w \underline{\perp} \widehat W$, so that $\mathcal{Q}_a(v)< 0$ and $\mathcal{Q}_a( w)\ge 0$.
On the contrary $\mathcal{Q}_a(v) = \mathcal{Q}_a( w)$ because 
\begin{align*} 
\mathcal{Q}_a(v) &= \mathcal{Q}_a(\psi_i) + \mathcal{Q}_a(w) - 2 \left(\int_B \nabla \psi_i \nabla  w dx - \int_B a  \psi_i w dx\right) 
\end{align*}
where one can see that both $\mathcal{Q}_a(\psi_i)$ and $\int_B \nabla \psi_i \nabla w dx - \int_B a  \psi_i  w dx$ are null by using respectively $\psi_i$ and $w$ as a test function in \eqref{auto-0-weak}. This proves $\psi_i \underline{\perp} \widehat W$ and $\widehat \Lambda_i=0$.
\\
If also $\L_{i+1}=0$, let $\psi_{i+1}$ the solution to \eqref{auto-0-weak} realizing the minimum in \eqref{Rayleigh-var} for $\Lambda_{i+1}$. One can check as in the previous case that $\psi_{i+1}\underline{\perp}\widehat W$, and by construction $\psi_{i+1}\perp \psi_i$, but possibly 
\[ k = \int_B \frac{\psi_{i} \psi_{i+1}}{|x|^2} dx \neq 0 . \]
We therefore set $\widetilde \psi_{i+1} = \psi_{i+1} - k \psi_i$. It is clear that $\widetilde\psi_{i+1} \underline{\perp} \psi_i$, and also that $\widetilde\psi_{i+1} \underline{\perp} \widehat W$ (because both $\psi_{i}$ and $\psi_{i+1}$ are $\underline{\perp}\widehat W$). On the other hand by  linearity $\widetilde \psi_{i+1}$ solves \eqref{auto-0-weak}, and so it realizes the minimum in \eqref{i+1-singular} assuring that $\widehat \L_{i+1}=0$.
\\
We therefore see that  $\widehat \L_{j}=0$ whenever $j$ is such that $\L_{j}=0$.
Similarly one can check that $\L_{j}=0$ whenever $j$ is such that $\widehat \L_{j}=0$: it suffices to switch the symbols $\perp$ and $\underline{\perp}$ (and the respective scalar product and projection).
\end{proof}

\subsection{Eigenvalues in symmetric spaces and singular Sturm-Liouville problems}\label{sse:general}
As we did in Subsection \ref{sse:prel-radial},  when $a$ possess some symmetries,
we can consider singular eigenvalues with symmetries restricting the definition of $ \widehat \L_{i}$ in \eqref{i+1-singular} to the spaces $\mathcal{H}\cap H^1_{0,{\mathcal{G}}}(B)$ (see \eqref{H-1-G}) for some subgroup $\mathcal{G}$ of the orthogonal group $O(N)$. The analogous of Propositions \ref{p-lambda-1}, \ref{prop-autov-successivi} and Lemma \ref{lem-var}	hold for these symmetric singular eigenvalues $ \widehat \L_{i}^\mathcal{G}$, so that they are attained whenever $ \widehat \L_{i}^\mathcal{G}<\left(\frac{N-2}2\right)^2$, the corresponding eigenfunction $ \psi_i^\mathcal{G}$ is a weak solution to \eqref{singular-eigenvalue-problem} and belong to $C^{1,\gamma}_{\loc}(\bar B\setminus\{0\})$, and conversely if \eqref{singular-eigenvalue-problem} has a nontrivial weak solution in $\mathcal{H}\cap H^1_{0,{\mathcal{G}}}(B)$	for some  $ \widehat \L_{i}^\mathcal{G}<\left(\frac{N-2}2\right)^2$, then  $\widehat \L_i^\mathcal{G}$ is an eigenvalue according to the definition  \eqref{i+1-singular} (restricted to $\mathcal{H}\cap H^1_{0,{\mathcal{G}}}(B)$). 
In particular, when $\mathcal{G}=O(N)$ and $a$ is radial, we let 
\[\mathcal{H}_{0,\rad}:=  \mathcal{H}\cap H^1_{0,\rad}(B)\]
and  write $\widehat\Lambda_i^{\rad}< \left(\frac{N-2}2\right)^2$, $ \psi_i^{\rad} \in \mathcal{H}_{0,\rad}$  for the respective eigenvalues and eigenfunctions.
\\
The Properties 1 and 2  are still satisfied, moreover

\ 

\noindent {\bf{Property 4.}} {\em{If $a$ is $\mathcal{G}$-invariant with respect to some subgroup $\mathcal{G}$ of $O(N)$ and $\widehat \L_1^{\mathcal G}<\left(\frac{N-2}2\right)^2$, then $\widehat \L_1=\widehat \L_1^{\mathcal G}$ and the first eigenfunction	$ \psi_1$ is $\mathcal{G}$-invariant. In particular if  $\widehat \L_1^{\rad}<\left(\frac{N-2}2\right)^2$, then $\widehat \L_1=\widehat \L_1^{\rad}$ and the first eigenfunction $ \psi_1$ is radial.}}

\

The proof follows exactly as in Subsection \ref{sse:prel-radial} and we omit it.

Finally we can extend to these eigenvalues with symmetry also Proposition \ref{prop-prel-1} and Proposition \ref{prop-prel-0} getting:
\begin{proposition}\label{prop-autov-rad}
The number of negative eigenvalues with symmetry $\Lambda_i^ \mathcal{G}$, according to their definition in Subsection \ref{sse:prel-radial}, coincides with the number of negative singular eigenvalues with symmetry $\widehat\Lambda_i^ \mathcal{G}$. Moreover, when $N\geq 3$, $\Lambda_i^ \mathcal{G}=0$ if and only if $\widehat \Lambda_i^ \mathcal{G}=0$, for the same index.	\\
	In particular 	the number of negative radial eigenvalues $\Lambda_i^\rad$ coincides with the number of negative radial singular eigenvalues $\widehat\Lambda_i^ \rad$, and   when $N\geq 3$ $\Lambda_i^ \rad=0$ if and only if $\widehat \Lambda_i^ \rad=0$, for the same index.
\end{proposition}

For future use we broaden the notion of radial singular eigenvalues, as we did in Subsection \ref{sse:prel-radial} for the standard radial eigenvalues. 
With this aim, for any $M\in\R$, $M\ge 2$, we define the Lebesgue  space
\begin{equation}\label{L2-M-pesato}
{\mathcal L}_M\; :\ \{w: \ (0,1)\to \R,  \text{ measurable and s.t } \int_0^1 r^{M-3}w^2\, dr<\infty\}
\end{equation}
with the scalar product $\int_0^1 r^{M-3}\eta\varphi\ dr$, so that 
\begin{equation}\label{scalar-HM}
\eta\underline \perp_{M}\varphi\  \ \Longleftrightarrow   \int_0^1 r^{M-3}\eta\varphi\ dr=0  \  \ \text{ for }\eta, \varphi\in {\mathcal L}_M.
\end{equation}
Then we define the Banach spaces
\begin{equation}\label{H-m}
\mathcal{H}_M\ :=H^1_M\cap {\mathcal L}_M\ \ \text{ and }\ \ \mathcal{H}_{0,M}\ :=H^1_{0,M}\cap  {\mathcal L}_M ,
\end{equation}
where $H^1_{M}$ and $H^1_{0,M}$ have been defined  in \eqref{ffci} and \eqref{accazeroemme}.

For  any $a\in L^{\infty}(0,1)$  we look at the weak formulation of the singular Sturm-Liouville problem 
	\begin{equation}\label{radial-singular-problem-M}
	\left\{\begin{array}{ll}
	- \left(r^{M-1} \psi'\right)'- r^{M-1} a\, \psi = r^{M-3} \widehat{\nu}_i  \psi & \text{ for } r\in(0,1)\\
	\psi\in  \mathcal H_{0,M}
	\end{array} \right.
\end{equation} 
	with $\widehat{\nu}_i\in \R$.  
	A weak solution to \eqref{radial-singular-problem-M} is $\psi\in \mathcal{H}_{0,M}$ such that 
	\begin{equation}\label{weak-radial-general} 
	\int_0^1 r^{M-1}\left( \psi_i'\varphi' - a\,\psi_i \varphi\right) dr =\widehat{\nu}_i\int_0^1 r^{M-3} \psi_i\varphi\, dr \end{equation}
	for any $\varphi\in \mathcal{H}_{0,M}$.
We say that $\widehat\nu_i$ is a generalized radial singular eigenvalue if there exists $\psi_i\in \mathcal{H}_{0,M}\setminus\{0\}$ that satisfies \eqref{weak-radial-general}. Such $\psi_i$ will be called a generalized radial singular eigenfunction  because when $M=N$ is an integer then $\widehat{\nu}_i=\widehat \L^{\rad}_i$ are the radial singular eigenvalues according to the previous definition, by the equivalence between $H^1_N$ and $H^1_{\rad}(B)$.
\\

{
It is worthwhile to spend few words describing the regularity and the  behaviour of the generalized radial singular eigenfunction near at $r=0$. 
\begin{proposition}\label{prop-prel-3}
	Let
	$\psi$ be a weak solution to  \eqref{radial-singular-problem-M} with
	$M\geq 2$ and $\widehat{\nu} < 0$. 
	Then $\psi\in C[0,1]\cap C^1(0,1]$  and, letting $\theta=\frac{2-M+\sqrt{(M-2)^2-4\widehat{\nu}}}{2}$, it satisfies
	\begin{equation}\label{decay-psi}
	\psi(r)=O\left(r^{\theta}\right)  \ \ \text{ and  } \ \ \ \psi'(r)=O\left(r^{\theta-1}\right)
	\ \ \text{ as }r\to 0
	\end{equation}
\end{proposition}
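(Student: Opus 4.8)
The plan is to remove the singularity at the origin by factoring out the expected leading power, thereby reducing \eqref{radial-singular-problem-M} to a \emph{regular} generalized radial problem of the type already treated in Proposition \ref{radial-regular-0}. Write $\theta=\frac{2-M+\sqrt{(M-2)^2-4\widehat{\nu}}}{2}$ for the exponent in the statement; squaring the defining identity shows that $\theta$ is the larger root of the indicial equation
\[ \theta^2+(M-2)\theta+\widehat{\nu}=0, \qquad\text{equivalently}\qquad \theta(\theta+M-2)=-\widehat{\nu}. \]
Since $\widehat{\nu}<0$ this forces $\theta>0$, and hence the effective dimension $\widetilde{M}:=M+2\theta$ satisfies $\widetilde{M}>2$. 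I would then set $\psi(r)=r^{\theta}h(r)$ and prove that $h$ solves a regular problem in dimension $\widetilde{M}$ with the same potential $a$ and eigenvalue $0$.

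First I would record that on $(0,1]$ the equation is non-singular: arguing exactly as in the first part of the proof of Proposition \ref{radial-regular-0}, the weak formulation \eqref{weak-radial-general} gives that $r^{M-1}\psi'$ is locally absolutely continuous on $(0,1]$, so $\psi\in C^1(0,1]$ and the ODE in \eqref{radial-singular-problem-M} holds classically there. Substituting $\psi=r^{\theta}h$ into the classical equation and using the indicial identity $\theta(\theta+M-2)=-\widehat{\nu}$ to cancel the most singular terms, a direct computation collapses the equation to
\[ \left(r^{\widetilde{M}-1}h'\right)'+r^{\widetilde{M}-1}a(r)\,h=0 \qquad\text{on }(0,1], \]
which is precisely \eqref{radial-eigenvalue-problem-M} with $M$ replaced by $\widetilde{M}$ and eigenvalue $\nu=0$. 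Next I would check that $h$ lands in the right space: since $h=r^{-\theta}\psi$ and $h'=r^{-\theta}(\psi'-\theta r^{-1}\psi)$, one has $r^{\widetilde{M}-1}h^2=r^{M-1}\psi^2$ and $r^{\widetilde{M}-1}|h'|^2\le 2r^{M-1}|\psi'|^2+2\theta^2 r^{M-3}\psi^2$, so the memberships $\psi\in H^1_M$ and $\psi\in\mathcal{L}_M$ (i.e.\ the finiteness of $\int_0^1 r^{M-3}\psi^2\,dr$) give $h\in H^1_{\widetilde{M}}$; and $h(1)=\psi(1)=0$ yields $h\in H^1_{0,\widetilde{M}}$.

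It then remains to see that $h$ inherits the regularity and the vanishing-derivative behaviour at the origin of a weak solution of \eqref{radial-eigenvalue-problem-M} in dimension $\widetilde{M}$. Because $\widetilde{M}>2$, integrating the classical equation for $h$ from $r$ to a fixed $r_0$ shows that $r^{\widetilde{M}-1}h'(r)$ has a finite limit $L$ as $r\to0$; were $L\neq0$ we would have $|h'(r)|\sim|L|\,r^{1-\widetilde{M}}$, contradicting $\int_0^1 r^{\widetilde{M}-1}|h'|^2\,dr<\infty$ because $1-\widetilde{M}<-1$. Hence $L=0$ and $h$ satisfies the first integral $r^{\widetilde{M}-1}h'(r)=-\int_0^r s^{\widetilde{M}-1}a(s)h(s)\,ds$, from which the boundedness bootstrap and the de l'Hopital argument of Proposition \ref{radial-regular-0} yield $h\in C^1[0,1]$ with $h'(0)=0$ (note that, since $\widetilde{M}>2$, the critical case $M=2$ of the original problem never reappears here). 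Returning to $\psi=r^{\theta}h$ with $h,h'$ bounded near $0$ gives $\psi\in C[0,1]$ with $\psi(0)=0$, together with $\psi(r)=O(r^{\theta})$ and $\psi'(r)=\theta r^{\theta-1}h+r^{\theta}h'=O(r^{\theta-1})$; combined with $\psi\in C^1(0,1]$ from the first step, this is the full statement.

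The crux is the reduction itself, and the one point demanding care is the passage from the classical equation on $(0,1]$ to a genuine weak formulation for $h$ in dimension $\widetilde{M}$ — equivalently, the justification that no boundary contribution survives at the origin. I dispose of this above by proving directly that $r^{\widetilde{M}-1}h'(r)\to0$, which is exactly where the sign hypothesis $\widehat{\nu}<0$ (through $\theta>0$ and $\widetilde{M}>2$) is essential: without it the factorisation would not land in a supercritical dimension and the origin could genuinely contribute.
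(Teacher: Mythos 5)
Your proof is correct, and it takes a genuinely different route from the paper's. The paper proceeds in two stages: it first derives the interior identity \eqref{passaggio} by testing with cutoffs (yielding $\psi\in C^1(0,1]$ and the growth bound \eqref{stima-ggn} on $r^{M-1}\psi'$), and then obtains the decay rate $O(r^{\theta})$ by testing with $\xi_\e=r^{\theta}\varphi_\e$, at which point it defers the core estimate to \cite{GGN} (estimate (2.23)) and \cite{DGG} (estimate (A.32)); the bound on $\psi'$ is finally read off from the first integral. You share the first stage, but then replace the second by the change of unknown $\psi=r^{\theta}h$, which — precisely because $\theta$ is the positive indicial root of $\theta(\theta+M-2)=-\widehat{\nu}$ — absorbs the singular potential $\widehat{\nu}r^{-2}$ into a shift of dimension, landing on the \emph{regular} problem \eqref{radial-eigenvalue-problem-M} in dimension $\widetilde M=M+2\theta=2+\sqrt{(M-2)^2-4\widehat\nu}>2$ with eigenvalue $0$. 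I checked the details: the indicial cancellation in the computation of $\bigl(r^{\widetilde M-1}h'\bigr)'$ is exact; the memberships $r^{\widetilde M-1}h^2=r^{M-1}\psi^2$ and $r^{\widetilde M-1}|h'|^2\le 2r^{M-1}|\psi'|^2+2\theta^2 r^{M-3}\psi^2$ do place $h$ in $H^1_{0,\widetilde M}$ using exactly the two integrability conditions defining $\mathcal H_{0,M}$; and your direct verification that $r^{\widetilde M-1}h'(r)\to 0$ (finite limit from integrability of $s^{\widetilde M-1}ah$, then $L=0$ from $h'\in L^2_{\widetilde M}$ and $1-\widetilde M<-1$) is precisely the point needed to recover \eqref{conto-con-lo-scalino} for $h$ and run the bootstrap and de l'Hopital arguments of Proposition \ref{radial-regular-0}. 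What your route buys is self-containedness — both estimates in \eqref{decay-psi} drop out simultaneously from $h,h'\in C[0,1]$, with no appeal to external estimates, and the critical case $M=2$ causes no trouble since $\widetilde M>2$ always. What the paper's route buys is mainly economy of exposition (it reuses machinery already written in \cite{GGN}, \cite{DGG}); conceptually the two are cousins, since multiplying test functions by $r^{\theta}$ is the weak-formulation shadow of your factorization, but your version makes the role of the hypothesis $\widehat\nu<0$ (through $\theta>0$) completely transparent.
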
 
\begin{proof}
{The first estimate in \eqref{decay-psi} has been proved for classical solutions in \cite{GGN} and \cite{GI} in the case $M=2$ and in Lemma 5.9 in \cite{DGG} in the case $M>2$. In both cases it is a consequence of the  local estimate
	\begin{equation}\label{stima-ggn}
	r^{M-1}\psi'(r)=\left\{
	\begin{array} {ll} O(1) & \text{ when }M>2\\
	O(\log r)& \text{ when }M=2
	\end{array}\right.
	\ \ \text{ as }r\to 0, 
	\end{equation}
	which is obtained directly from \eqref{radial-singular-problem-M} integrating on $(r,1)$. 
	Let us sketch here how to  obtain  \eqref{stima-ggn} for weak solutions.
	For every $ a,b\in [0,1]$, $a<b$  and for every $\e>0$ sufficiently small, let us consider a function $\varphi_{\e, a,b}\in C^{\infty}(0,1)$ such that 
	\begin{equation}\label{gradino}
	\varphi_{\e, a,b}:=\begin{cases}
	0 & \text{ if }  0<r<a+\e , \ b-\e<r\leq 1   \\
	1 & \text{ if }  a+2\e<r<b-2\e, 
	\end{cases}\end{equation}
	with $0\leq  \varphi_{\e,a,b}\leq 1  $ and $|\varphi_{\e,a,b}'(r)|\leq \frac C \e$, see Proposition \ref{radial-regular-0}. By definition $\varphi_{\e, a,b} \in C^{\infty}_0(0,1)$ for every $\e$ small enough 
	\\
	Since $\psi\in H^1_M$, the function $r^{M-1}\psi'$ belongs to $L^1(0,1)$ and therefore 	almost every $r\in (0,1)$ is a Lebesgue point for $r^{M-1}\psi'$.
	Let $t\in (0,1)$ be one of such Lebesgue point fixed once and for all. If  $\bar r\in (0,t)$ is another Lebesgue point of $r^{M-1}\psi'$, we choose $a=\bar r$, $b=t$ and use $\varphi_\e:=\varphi_{\e, \bar r,t}$  as test function in \eqref{weak-radial-general}, getting
	\[\int_0^1 r^{M-1}\psi'\varphi_\e' \, dr=\int_0^1 r^{M-1}\left(a(r)+\frac  {\widehat{\nu}}{r^2}\right)\psi \varphi_\e\, dr .
	\]
	Observe then that 
	\[\int_0^1 r^{M-1}a(r)\psi \varphi_\e\, dr=\int_{\bar r+\e}^{t-\e}r^{M-1}a(r)\psi \varphi_\e\, dr\]
	and that in the interval $(\bar r+\e,t-\e)$
	\[ |a(r)\psi \varphi_\e|\leq |a(r)\psi|\leq C\sup_{\left[\frac {\bar r}2, 1\right]}|\psi(r)|\leq C\]
	for a constant $C$ that depends on $\bar r$ but is independent on $\e$ as $\psi\in H^1_M\subset C(0,1]$. We can then pass to the limit in the above integral as $\e\to 0$ getting that
	\[\int_0^1 r^{M-1}a(r)\psi \varphi_\e\, dr\to \int_{\bar r}^t r^{M-1}a(r)\psi\, dr.\]
	Moreover, in the same way, since $|r^{M-3}\psi\varphi_\e|\leq |r^{M-3}\psi|$ and $r^{M-3}\psi\in L^1(\frac {\bar r}2,1)$ we have that
	\[\int_0^1 r^{M-3}\psi \varphi_\e\, dr=\int_{\bar r+\e}^{t-\e}r^{M-3}\psi \varphi_\e
	\to \int_{\bar r}^t r^{M-3}\psi\, dr \]
	Finally 
	\[\int_0^1 r^{M-1}\psi'\varphi_\e' \, dr=\int_{\bar r+\e}^{\bar r+2\e}r^{M-1}\psi'\varphi_\e' \, dr+\int_{t-2\e}^{t-\e}r^{M-1}\psi'\varphi_\e'\, dr\]
	by definition of $\varphi_\e$. Passing to the limit and using $|\varphi_\e'|\leq \frac C\e$ we obtain 
	\[\int_0^1 r^{M-1}\psi'\varphi_\e' \, dr\to \bar{r}^{M-1}\psi'(\bar r)-t^{M-1}\psi'(t) .\]
	Putting together these estimates we obtain for almost every $\bar r \in (0,t)$ that 
	\begin{equation}\label{passaggio}
	\bar{r}^{M-1}\psi'(\bar r)= t^{M-1}\psi'(t)+\int_{\bar r}^t r^{M-1}a(r)\psi\, dr+ \widehat \nu \int_{\bar r}^t r^{M-3}\psi\, dr .\end{equation}
	Similarly \eqref{passaggio} holds also for almost every $\bar r \in (t,1)$: it suffices to take $\varphi_{\e}:=\varphi_{\e, t, \bar r}$ and argue in the same way. Eventually $\psi\in C^1(0,1]$, because the r.A.s.~of \eqref{passaggio} is continuous, and  \eqref{stima-ggn} follows since 
	\[\int_{\bar r}^1 r^{M-3}\psi\, dr\leq  \left( \int_{\bar r}^1 r^{M-3}\, dr\right)^{\frac 12}\left( \int_0^1 r^{M-3}\psi^2\, dr\right)^{\frac 12}\leq \left\{
	\begin{array} {ll} 
	C & \text{ when }M>2 ,\\
	C\left(1+\log \bar r\right) & \text{ when }M=2 .
	\end{array}\right.
	\]
	To complete the proof of the first statement in \eqref{decay-psi} we let $\varphi_\e:=\varphi_{\e,r_n,R}$ for some $r_n,R\in(0,1)$ and use $\xi_\e:=r^\theta\varphi_\e$ as test function in \eqref{weak-radial-general}. Then, letting $\e\to 0$ and reasoning as before we obtain estimate (2.23) in \cite{GGN} or (A.32) of \cite{DGG} depending on $M$. The rest of the proof follows as in the papers \cite{GGN} and \cite{DGG} and we omit it.
	Estimates \eqref{decay-psi} and \eqref{passaggio} then imply that $\psi\in C[0,1]\cap C^1(0,1]$ and prove the regularity of $\psi$.
	
	Finally, once we have that $\psi=O(r^\theta)$ as $r\to 0$ with $\theta>0$, we also have that $r^{M-3}\psi\in L^1(0,1)$. We can then repeat the previous argument testing the weak equation for $\psi$ with $\varphi_{\e,a,b}$ with $a=0$ and $b=\bar r$, getting that
		\[
		\bar{r}^{M-1}\psi'(\bar r)= -\int_0^{\bar r} r^{M-1}a(r)\psi\, dr- \widehat \nu \int_0^{\bar r} r^{M-3}\psi\, dr\]
		Inserting the estimate  $\psi=O(r^\theta)$ in this last identity we obtain also the second estimate in \eqref{decay-psi} concluding the proof.}
\end{proof}

\begin{remark}\label{radial-general-0} 
	If $M> 2$  and	$\psi\in {\mathcal H}_{0,M}=  H^1_{0,M}$ is  a weak solution to  \eqref{radial-singular-problem-M} with $\widehat{\nu} = 0$, then actually $\psi$ solves \eqref{radial-eigenvalue-problem-M} with ${\nu} = 0$ and Proposition \ref{radial-regular-0} provides an estimate similar to \eqref{decay-psi}, i.e
	\begin{equation}\label{decay-psi-0}\psi(r)=O(1)  \ \ \text{ and  } \ \ \psi'(r)= o(1) 	\ \ \text{ as }r\to 0 .\end{equation}
\end{remark}
}

As before, one can deal with the minimization problem 
\[
\widehat{\nu}_1:=\inf_{\substack{w\in\mathcal{H}_{0,M}\ w\neq 0}}\frac{\mathcal{Q}_{a,M}(w)}{\int_0^1 r^{M-3}w^2(x)\, dr},  
\]
which can be iterated as
\begin{equation}\label{radial-singular-M}
  \widehat{\nu}_{i}:=\inf_{\substack{w\in\mathcal{H}_{0,M}\ w\neq 0\\ w\underline \perp_{M}\{\psi_1,\dots,\psi_{i-1}\}}}\frac{\mathcal{Q}_{a,M}(w)}{\int_0^1 r^{M-3}w^2(x)\, dr}.  
\end{equation}
Here the quadratic form $\mathcal{Q}_{a,M}$ is as in \eqref{forma-quadratica-a-rad}. 
Taking advantage from the Hardy inequality in the space $H^1_{M,0}$ (see Proposition \ref{hardy}) and repeating the arguments of Propositions \ref{p-lambda-1}, \ref{prop-autov-successivi}  and Lemma \ref{lem-var} one can see that
\begin{proposition}\label{c-i-attained}
	When the infimum in \eqref{radial-singular-M} is $\widehat{\nu}_i<\left(\frac{M-2}{2}\right)^2$, then $\widehat{\nu}_i$ is attained at a function $\psi_i\in \mathcal H_{0,M}$ which satisfies \eqref{weak-radial-general} and therefore is a weak solution to \eqref{radial-singular-problem-M}.
Viceversa the eigenvalues of problem  \eqref{radial-singular-problem-M}
	which are less than $\left(\frac{M-2}{2}\right)^2$ 
	coincide with the numbers  $\widehat \nu_i$  defined in \eqref{radial-singular-M}.
\end{proposition}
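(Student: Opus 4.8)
The plan is to transpose, almost verbatim, the arguments of Propositions \ref{p-lambda-1} and \ref{prop-autov-successivi} and Lemma \ref{lem-var} to the weighted one-dimensional setting, replacing throughout the integrals $\int_B|\nabla w|^2$, $\int_B w^2$ and $\int_B|x|^{-2}w^2$ by their weighted counterparts $\int_0^1 r^{M-1}|w'|^2\,dr$, $\int_0^1 r^{M-1}w^2\,dr$ and $\int_0^1 r^{M-3}w^2\,dr$, and the standard Hardy inequality by its parametric analog $\int_0^1 r^{M-3}w^2\,dr \le \frac{4}{(M-2)^2}\int_0^1 r^{M-1}|w'|^2\,dr$, valid for $M>2$ (Proposition \ref{hardy}). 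The compact embedding $H^1_0(B)\hookrightarrow L^2(B)$ used there is here replaced by the compact embedding of $H^1_M$ into $L^2_M$ provided by Lemma \ref{sobolev}.

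First I would establish attainment of $\widehat\nu_1$. Taking a minimizing sequence $w_n\in\mathcal H_{0,M}$ normalized so that $\int_0^1 r^{M-1}w_n^2\,dr=1$, with $\widehat\beta_n\searrow\widehat\nu_1$, I would bound $\int_0^1 r^{M-1}|w_n'|^2\,dr\le C$ exactly as in Proposition \ref{p-lambda-1}: if $\widehat\beta_n\le 0$ the bound is immediate from $\|a\|_\infty$, while if $\widehat\beta_n\ge 0$ (which forces $M>2$) the parametric Hardy inequality absorbs the singular term, using that $\widehat\nu_1<\big(\frac{M-2}{2}\big)^2$. By Lemma \ref{sobolev} one extracts $w_n\rightharpoonup w$ in $H^1_M$ with strong convergence in $L^2_M$, and the same two-case estimate yields $\int_0^1 r^{M-3}w_n^2\,dr\le C$. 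The passage to the limit splits according to the sign of $\widehat\nu_1$, via Fatou's lemma as in Proposition \ref{p-lambda-1}, showing that $w$ realizes the infimum; the Euler--Lagrange computation then gives that $w=\psi_1$ solves \eqref{weak-radial-general}.

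Next I would iterate as in Proposition \ref{prop-autov-successivi}: assuming $\psi_1,\dots,\psi_{i-1}$ already constructed and mutually orthogonal in $\mathcal L_M$, a minimizing sequence for \eqref{radial-singular-M} satisfies the same bounds, and the uniform control $\int_0^1 r^{M-3}w_n^2\,dr\le C$ lets me invoke Banach--Alaoglu to pass to a subsequence converging weakly in $\mathcal L_M$. This preserves the orthogonality constraints $w\underline{\perp}_M\psi_j$ in the limit, so $\widehat\nu_i$ is attained by a function $\psi_i$ solving \eqref{radial-singular-problem-M}. The converse inclusion, that every eigenvalue below the threshold equals some $\widehat\nu_i$, then follows exactly as in Lemma \ref{lem-var}.

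The only genuine obstacle is the loss of compactness at the Hardy threshold: the embedding of $H^1_M$ into $\mathcal L_M$ (weight $r^{M-3}$) is not compact, failing precisely at the constant $\big(\frac{M-2}{2}\big)^2$, so mass could in principle concentrate at the origin. The hypothesis $\widehat\nu_i<\big(\frac{M-2}{2}\big)^2$ is exactly what rules this out, and the delicate point is the sign-dependent Fatou argument that propagates the weighted $\mathcal L_M$-norm to the limit without losing mass at $r=0$; once this is in place, all remaining steps are routine adaptations of the cited proofs.
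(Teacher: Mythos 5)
Your proposal is correct and follows exactly the route the paper takes: the paper itself proves Proposition \ref{c-i-attained} by simply invoking the Hardy inequality of Proposition \ref{hardy} and "repeating the arguments of Propositions \ref{p-lambda-1}, \ref{prop-autov-successivi} and Lemma \ref{lem-var}", which is precisely the transposition you carry out (including the sign-dependent Fatou argument, the Banach--Alaoglu step for the orthogonality constraints, and the role of the threshold $\left(\frac{M-2}{2}\right)^2$ in restoring compactness). Your write-up is in fact more detailed than the paper's one-line justification.
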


 In force of the variational characterization in Proposition \ref{c-i-attained} and of the regularity of eigenfunctions pointed out in Proposition \ref{prop-prel-3}, one can repeat the argument used in Subsection \ref{sse:prel-radial} to show that the main properties of the eigenvalues and of the related eigenfunctions still hold until $\widehat{\nu}_i<\left(\frac{M-2}{2}\right)^2$, in particular: 

\

\noindent {\bf{Property 1.}} {\em{The first eigenvalue $\widehat{\nu}_1$ is simple and the first eigenfunction $\psi_1$ is strictly positive (or negative) in $(0,1)$.}}

\medskip

\noindent {\bf{Property 2.}} {\em{Eigenfunctions related to different eigenvalues $\widehat{\nu}_i$ are orthogonal in ${\mathcal L}_M$.}}

\medskip

\noindent {\bf{Property 3.}} {\em{The $i$-th eigenfunction $\psi_i$  has at most $i$ nodal domains.}}

\medskip

Also the analogous of Propositions \ref{prop-prel-1} and \ref{prop-prel-0} continue to hold:

\begin{proposition}\label{prop-prel-2}
	Let $\nu_i$ be the generalized radial eigenvalues  defined in \eqref{Rayleigh-rad-M} and $\widehat{\nu}_i$ be the generalized singular  eigenvalues as defined in \eqref{radial-singular-M}. Then the number of negative eigenvalues $\nu_i$ coincides with the number of negative eigenvalues $ \widehat \nu_{i}$ corresponding to the same function $a(r)$.
	Moreover if $M>2$, $\nu_i=0$ if and only if $ \widehat \nu_{i}=0$ for the same index $i$.
\end{proposition}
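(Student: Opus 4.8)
The strategy is to reproduce, in the one--dimensional weighted framework, the two arguments already carried out for Propositions~\ref{prop-prel-1} and~\ref{prop-prel-0}, with the integer dimension $N$ replaced by the real parameter $M$, the measure $dx$ by $r^{M-1}\,dr$, the weight $|x|^{-2}$ by the density $r^{M-3}$ that appears in the denominator of \eqref{radial-singular-M}, and the spaces $H^1_0(B),\mathcal H_0$ by $H^1_{0,M},\mathcal H_{0,M}$. The unifying observation, in the spirit of Remark~\ref{faggio}, is that the number of strictly negative eigenvalues of either problem equals the maximal dimension of a subspace on which the quadratic form $\mathcal Q_{a,M}$ of \eqref{forma-quadratica-a-rad} is negative definite; the two problems differ \emph{only} in the ambient space over which this dimension is measured.

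For the first assertion, let $H$ (resp.\ $K$) denote the number of negative $\nu_i$ (resp.\ $\widehat\nu_i$). The min--max characterisation \eqref{Rayleigh-rad-M} of the eigenvalues of the compact self--adjoint operator $T_a$ on $L^2_M$ shows, by the classical Courant--Fischer argument, that $H$ equals the maximal dimension of a subspace $V\subseteq H^1_{0,M}$ on which $\mathcal Q_{a,M}<0$. On the singular side, since every negative value lies below the threshold $\left(\frac{M-2}{2}\right)^2$, Proposition~\ref{c-i-attained} guarantees attainment and the validity of the corresponding min--max formula; hence $K$ equals the maximal dimension of a subspace $W\subseteq\mathcal H_{0,M}$ on which $\mathcal Q_{a,M}<0$. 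Indeed, writing $w=\sum_{j=1}^{K}c_j\widehat\psi_j$ for the first $K$ singular eigenfunctions and using the weak formulation \eqref{weak-radial-general} together with the $\mathcal L_M$--orthogonality \eqref{scalar-HM},
\[
\mathcal Q_{a,M}(w)=\sum_{j=1}^{K}c_j^{\,2}\,\widehat\nu_j\int_0^1 r^{M-3}\widehat\psi_j^{\,2}\,dr<0 ,
\]
so these eigenfunctions span a $K$--dimensional negative subspace, while no larger one exists since $\widehat\nu_{K+1}\ge 0$. As $\mathcal H_{0,M}=H^1_{0,M}\cap\mathcal L_M\subseteq H^1_{0,M}$, every negative subspace of $\mathcal H_{0,M}$ is one of $H^1_{0,M}$, whence $K\le H$.

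The reverse inequality $H\le K$ is immediate when $M>2$: by the Hardy inequality of Proposition~\ref{hardy} one has $\mathcal H_{0,M}=H^1_{0,M}$, so the two maximal dimensions agree. The delicate case, which I expect to be the main obstacle, is $M=2$, where the inclusion $\mathcal H_{0,M}\subsetneq H^1_{0,M}$ is strict. Here I would take an $H$--dimensional subspace $V\subseteq H^1_{0,2}$ on which $\mathcal Q_{a,2}$ is negative definite, spanned --- thanks to Proposition~\ref{radial-regular-0} --- by regular eigenfunctions lying in $C^1[0,1]$, hence bounded. Introducing the logarithmic cut--offs $\eta_\e$, equal to $0$ on $(0,\e^2)$, to $\tfrac{\log(r/\e^2)}{\log(1/\e)}$ on $(\e^2,\e)$, and to $1$ on $(\e,1)$, one computes $\int_0^1 r|\eta_\e'|^2\,dr=\tfrac{1}{\log(1/\e)}\to 0$, so that $\eta_\e v\to v$ in $H^1_2$ for every bounded $v$, while $\eta_\e v\in\mathcal H_{0,2}$ because it vanishes near the origin. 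By continuity of $\mathcal Q_{a,2}$ with respect to the $H^1_2$--norm and the finiteness of $\dim V$, the cut--off images of a basis of $V$ span, for $\e$ small, an $H$--dimensional subspace of $\mathcal H_{0,2}$ on which $\mathcal Q_{a,2}$ is still negative definite. Thus $H\le K$, and together with $K\le H$ this proves the first assertion for every $M\ge 2$.

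For the second assertion assume $M>2$, so that $\mathcal H_{0,M}=H^1_{0,M}$ by Hardy. The crucial remark is that the two null--eigenvalue problems coincide: setting $\nu=0$ in \eqref{radial-eigenvalue-weak-sol} and $\widehat\nu=0$ in \eqref{weak-radial-general} yields the very same weak equation $\int_0^1 r^{M-1}(\psi'\varphi'-a\psi\varphi)\,dr=0$. The proof of Proposition~\ref{prop-prel-0} then transfers essentially verbatim, with $\underline{\perp}$ replaced by $\underline{\perp}_M$ and the span $\widehat W$ of the first $i-1$ singular eigenfunctions playing the same role: if $\nu_i=0$ and $\nu_{i-1}<0$, the first assertion gives $\widehat\nu_{i-1}<0\le\widehat\nu_i$, and decomposing the regular null eigenfunction along $\widehat W$ and its $\mathcal L_M$--orthogonal complement shows $\psi_i\,\underline{\perp}_M\,\widehat W$, so that $\psi_i$ realises the infimum \eqref{radial-singular-M} at level $0$ and $\widehat\nu_i=0$; a higher multiplicity of the zero eigenvalue is handled by the same Gram--Schmidt correction $\widetilde\psi_{i+1}=\psi_{i+1}-k\psi_i$ carried out in $\mathcal L_M$. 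Exchanging the roles of the two families gives the converse implication, which completes the proof.
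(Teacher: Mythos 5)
Your proof is correct and follows essentially the same route the paper intends: the paper omits the proof of Proposition \ref{prop-prel-2}, stating that it "follows exactly as in the previous case," i.e.\ by transplanting the arguments of Propositions \ref{prop-prel-1} and \ref{prop-prel-0} (negative-subspace count via the min--max characterization, Hardy's inequality identifying $\mathcal H_{0,M}$ with $H^1_{0,M}$ for $M>2$, a cut-off argument for $M=2$, and the orthogonal-decomposition/Gram--Schmidt argument for the zero eigenvalue), which is precisely what you carry out. Your write-up merely supplies the details (notably the explicit logarithmic cut-off for $M=2$) that the paper delegates to the cited references.
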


The proof of these properties follows exactly as in the previous case and so we omit it.
\begin{remark}Proposition \ref{prop-prel-2} and Remark \ref{faggio}
    together with the variational characterization of the standard eigenvalues $\nu_i$ in \eqref{Rayleigh-rad-M} imply that the number of negative singular eigenvalues $\widehat \nu_i$ coincides with the maximal dimension of a subspace of $H^1_{0,M}$ in which the quadratic form $\mathcal Q_{a,M}$ defined in \eqref{forma-quadratica-a-rad} is negative defined. \end{remark}

Some more care is needed to show that {\bf{Property 5}} holds also for the generalized radial singular eigenvalues and eigenfunctions, namely 

\

\noindent {\bf{Property 5.}} {\em{Each generalized radial singular eigenvalue $\widehat\nu_i$ is simple and any $i$-th eigenfunction has exactly $i$ nodal domains. In particular  each radial singular eigenvalue $\widehat\L_i^{\rad}$ is simple and any $i$-th eigenfunction $\psi_i^{\rad}$ has exactly $i$ nodal domains.}}

\

The proof of this property relies, as in the previous case, on a Picone identity which holds also in this contest, but only for $r\in(0,1]$. It can be established repeating the proof of Lemma \ref{integrali-in-croce}, and can be stated as follows:
	
	\begin{lemma}[Picone identity]\label{integrali-in-croce-sing}
		Let $\psi_i, \psi_j$ weakly solve \eqref{radial-singular-problem-M} corresponding to $\widehat{\nu}_i , \widehat{\nu}_j$ respectively. Then \begin{align}\label{picone-identity-0-sing}
		\left(r^{M-1}\left(\psi_i'\psi_j- \psi_i\psi_j'\right) \right)'& = r^{M-3}(\widehat\nu_j-\widehat\nu_i) \psi_i \psi_j
		\intertext{for all $r\in(0,1]$.	If, in addition, $r$ is such that $\psi_j(r)\neq 0$, then at that $r$} \label{picone-identity-sing}
		\left(r^{M-1}\left(\psi_i'\psi_j- \psi_i\psi_j'\right)\frac{\psi_i}{\psi_j} \right)' &= 
		r^{M-3}(\widehat\nu_j-\widehat\nu_i) \psi_i^2 + r^{M-1} \left(\psi_i' - \psi'_j \frac{\psi_i}{\psi_j}\right)^2  .
		\end{align} 
		\end{lemma}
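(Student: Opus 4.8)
The plan is to follow the proof of the regular Picone identity (Lemma \ref{integrali-in-croce}) almost verbatim, replacing Proposition \ref{radial-regular-0} by Proposition \ref{prop-prel-3} to control the behaviour at the origin. Set $\xi(r):=r^{M-1}\left(\psi_i'\psi_j-\psi_i\psi_j'\right)$. Since $\widehat\nu_i,\widehat\nu_j<0$, Proposition \ref{prop-prel-3} gives $\psi_i,\psi_j\in C[0,1]\cap C^1(0,1]$, so $\xi$ is continuous on $(0,1]$. This is exactly the point where the singular case is weaker than the regular one: the eigenfunctions need not be $C^1$ at $r=0$, which is why the statement is confined to $(0,1]$ rather than $[0,1]$.

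First I would establish \eqref{picone-identity-0-sing} in the distributional sense on $(0,1)$. For $\varphi\in C^1_0(0,1)$ the products $\varphi\psi_i$ and $\varphi\psi_j$ have compact support in $(0,1)$ and hence are admissible test functions in $\mathcal{H}_{0,M}$. Inserting $\varphi\psi_j$ in the weak equation \eqref{weak-radial-general} for $\psi_i$, inserting $\varphi\psi_i$ in the weak equation for $\psi_j$, and subtracting, the terms containing $a(r)$ cancel, as do the symmetric terms $r^{M-1}\varphi\,\psi_i'\psi_j'$; what survives is
\[
\int_0^1 \xi\,\varphi'\,dr = (\widehat\nu_i-\widehat\nu_j)\int_0^1 r^{M-3}\psi_i\psi_j\,\varphi\,dr ,
\]
which says precisely that $\xi$ has weak derivative $(\widehat\nu_j-\widehat\nu_i)\,r^{M-3}\psi_i\psi_j$ on $(0,1)$.

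To upgrade this to a pointwise identity I would invoke \cite[VIII.2]{Bbook}, for which the weak derivative must be integrable. Since $\psi_i,\psi_j\in\mathcal{L}_M$, the Cauchy--Schwarz inequality gives $\int_0^1 r^{M-3}|\psi_i\psi_j|\,dr\le\big(\int_0^1 r^{M-3}\psi_i^2\,dr\big)^{1/2}\big(\int_0^1 r^{M-3}\psi_j^2\,dr\big)^{1/2}<\infty$, so that $r^{M-3}\psi_i\psi_j\in L^1(0,1)$. Hence $\xi(t)-\xi(r)=(\widehat\nu_j-\widehat\nu_i)\int_r^t s^{M-3}\psi_i\psi_j\,ds$ for a.e.~$0<r<t\le1$, and since both sides are continuous on $(0,1]$ the equality holds for all such $r,t$; differentiating yields \eqref{picone-identity-0-sing} on $(0,1]$. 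Finally \eqref{picone-identity-sing} is elementary calculus: wherever $\psi_j(r)\neq0$ I would differentiate the product $\xi\cdot(\psi_i/\psi_j)$, using $\xi'=r^{M-3}(\widehat\nu_j-\widehat\nu_i)\psi_i\psi_j$ together with $(\psi_i/\psi_j)'=(\psi_i'\psi_j-\psi_i\psi_j')/\psi_j^2$, so that the second summand collapses to the perfect square $r^{M-1}\left(\psi_i'-\psi_j'\,\psi_i/\psi_j\right)^2$.

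The only genuine obstacle, as opposed to the regular case, is the behaviour at $r=0$: one must make sure the weighted integrability near the origin holds, which is handled by the $\mathcal{L}_M$ membership and is precisely the reason the weight on the right-hand side is $r^{M-3}$, and one must avoid claiming differentiability of $\xi$ at $0$, where the eigenfunctions need not be $C^1$. Restricting the identity to $(0,1]$ circumvents this, while the continuity of $\xi$ on $(0,1]$ provided by Proposition \ref{prop-prel-3} is enough to pass from the almost-everywhere identity to the everywhere one.
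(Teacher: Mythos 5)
Your proposal is correct and follows exactly the route the paper intends: the paper gives no separate proof of Lemma \ref{integrali-in-croce-sing} but states it "can be established repeating the proof of Lemma \ref{integrali-in-croce}", and your argument is precisely that repetition, with Proposition \ref{prop-prel-3} replacing Proposition \ref{radial-regular-0} for regularity, the identity restricted to $(0,1]$, and the $\mathcal{L}_M$ membership supplying the integrability of $r^{M-3}\psi_i\psi_j$ needed to invoke \cite[VIII.2]{Bbook}.
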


                We next show which changes are needed to prove Property 5 in the singular framework.
	
\begin{proof}[Proof of Property 5]
	The same arguments used in Subsection \ref{sse:prel-radial} to check  Property 5 for the generalized radial eigenvalue $\nu_i$, together with \eqref{picone-identity-0-sing} yield that  each eigenvalue $\widehat{\nu}_i$ is simple.
Next we show that $\psi_i$ has exactly $i$ nodal domains, by proving that it has $i-1$ zeros in $(0,1)$. As noticed before $\psi_1>0$ on $(0,1)$ provided that $ \widehat{\nu}_1<\left(\frac{M-2}{2}\right)^2$. If also $\widehat{\nu}_2 <\left(\frac{M-2}{2}\right)^2$  its eigenfunction $\psi_2$ is orthogonal to $\psi_1$ in the sense \eqref{scalar-HM} and so, using also Property 3, it has exactly $1$ zero in $(0,1)$. 
\\
Assume now that  $\psi_i$ has $i-1$ zeros in $(0,1)$, we want to show that $\psi_{i+1}$ has $i$ zeros in $(0,1)$ concluding the proof.
If $a,b\in(0,1]$ are two consecutive zeros of $\psi_i$ we can argue as in Subsection \ref{sse:prel-radial} (but using Lemma \ref{integrali-in-croce-sing} instead of Lemma \ref{integrali-in-croce} )  getting that $\psi_{i+1}$ has at least one zero in $(a,b)$. This means, letting $s\in(0,1)$ be the smallest positive zero of $\psi_i$,  that $\psi_{i+1}$  has at least $i-1$ zeros inside $(s,1)$. It remains to show that $\psi_{i+1}$ has another zero in $(0,s)$.	
	To this end we denote by $\sigma$ the first zero of $\psi_{i+1}$ in $(0,1)$ and assume by contradiction that $\sigma\ge s$. 
Let $w$ be the function which coincides with $\psi_i$ in $[0,s]$ and is null elsewhere; using it as a test function  in \eqref{weak-radial-general} we see that
\begin{align*}\int_0^{\sigma} r^{M-1} \left( |w'|^2 -  a(r) w^2 \right) dr
= \int_0^{s} r^{M-1} \left( |w'|^2 -  a(r) w^2 \right) dr \\
= \widehat\nu_i \int_0^{s} r^{M-3}w^2 dr = \widehat\nu_{i} \int_0^{\sigma} r^{M-3} w^2 dr .
\end{align*}
Therefore
\[
\min_{\substack{w\in  \mathcal H_{0,M}(0,\sigma)\\ w\neq 0}} \frac{\int_0^{\sigma} r^{M-1}\left(|w'|^2-a(r)w^2\right) dr}{\int_0^{\sigma} r^{M-3} w^2(r)\, dr}   \le \widehat\nu_i ,
\]
which means that the first eigenvalue of  \eqref{radial-singular-problem-M} settled in $(0,\sigma)$ instead of $(0,1)$ is less or equal than $\widehat\nu_i$, and therefore strictly less that $\widehat\nu_{i+1}$.
On the other hand $\psi_{i+1}$ is an eigenfunction related to the eigenvalue $\widehat\nu_{i+1}$ for this restricted problem, and since it has fixed sign on $(0,\sigma)$ it follows by  Properties 1 and 2 that $\widehat\nu_{i+1}$ actually is the first eigenvalue. The contradiction implies that $\psi_{i+1}$ admits another zero in $(0,s)$ showing that it has $i$ zeros in $(0,1)$ and concluding the proof.
\end{proof}

\section{Decomposition of the singular eigenvalues and applications to semilinear problems}\label{sse:2-bis}

Eventually we  explain the motivations which lead to study the singular eigenvalue problem \eqref{singular-eigenvalue-problem} and \eqref{radial-singular-problem-M} instead of \eqref{eigenvalue-problem} and  \eqref{radial-eigenvalue-problem-M}. 

Let us recall that the Spherical Harmonics, that we denote by $Y_j$, are the eigenfunctions of the Laplace-Beltrami operator on the sphere $S^{N-1}$. Of course the operator $\left(-\Delta_{S^{N-1}}\right)^{-1}$ is positive compact and selfadjoint in $L^2(S^{N-1})$ and  as before it admits a sequence of eigenvalues $0=\l_1<\l_2\leq \l_j$ and eigenfunctions $Y_j(\theta)$ (where $\theta$ is the system of coordinates on the sphere induced by the spherical coordinates in the space $\R^N$) which form an Hilbert basis for $L^2(S^{N-1})$. 
Namely they satisfy
\begin{equation}\label{eigen-lapl-beltr}
-\Delta_{S^{N-1}}Y_j(\theta)=\l_j Y_j(\theta)\  \text{ for }\theta\in S^{N-1}
\end{equation}
the eigenvalues $\l_j$ are given by the well known values 
\begin{equation}\label{eigen-beltrami}
\l_j:=j(N+j-2)\ \ \text{ for }j=0,1,\dots
\end{equation} 
each of which has multiplicity 
\begin{equation}\label{multiplicity-beltrami}
N_j:=\begin{cases}
1 & \text{ when }j=0,\\
\frac{(N+2j-2)(N+j-3)!}{(N-2)!j!} & \text{ when }j\geq 1 .
\end{cases}
\end{equation}
These eigenfunctions $Y_j(\theta)$ are bounded in $L^{\infty}(S^{N-1})$ by standard regularity theory.
Observe that $N_0=1$ with corresponding eigenfunction $Y_0(\theta)=c$ with $c$ constant, and in dimension $2$,  $N_j=2$ for any $j\geq 1$ with corresponding eigenfunctions $Y_j(\theta)=a_j\cos j \theta+b_j\sin j\theta$.

When $a$ is a radial functions, the singular eigenvalues can be decomposed in radial and angular part as follows. 
\begin{proposition}\label{prop-prel-4}
Let $a$ be any radial function in $L^{\infty}(B)$.
The singular eigenvalues $\widehat \L_i<\left(\frac{N-2}2\right)^2$ defined  in \eqref{i+1-singular} can be decomposed in radial and angular part as 
	\begin{equation}\label{decomposition}
	\widehat \L _i=\widehat \L_k^{\rad}+\l_j \quad \text{ for some  $k\geq 1$ and $j\ge 0$,}
	\end{equation}
	and the functions
	\begin{equation}\label{decomp-autof}
	 \psi_i(x)= \psi_k^{\rad}(r)Y_j(\theta)
	\end{equation}
	are solutions to \eqref{singular-eigenvalue-problem} corresponding to $\widehat \L_i$. Conversely, if a singular radial eigenvalue $\widehat \L_k^{\rad}$ according to \eqref{radial-singular} is such that  $ \widehat \L_k^{\rad}< \left(\frac {N-2}2\right)^2 -\l_j$ for some $j\ge 0$, then $	\widehat \L _i$ given by \eqref{decomposition} is a singular eigenvalue for \eqref{i+1-singular} and the function defined by \eqref{decomp-autof} is a related eigenfunction.
	\end{proposition}

\begin{proof}
Assume that $\psi\in \mathcal{H}_0$ is an eigenfunction related to a singular eigenvalue $\widehat\L<\left(\frac{N-2}2\right)^2$. Since $\mathcal{H}_0\subseteq H^1_0(B)$ we can decompose $ \psi$ along spherical harmonics $Y_j(\theta)$ namely
\[ \psi(r,\theta)=\sum_{j=0}^{\infty} \psi_j(r)Y_j(\theta) \ \ \text{ for }r\in (0,1)\ , \, \theta\in S^{N-1}\]
where 
\begin{equation}\label{psi-j}
\psi_j(r):=\int_{S^{N-1}}  \psi(r,\theta)Y_j(\theta)\, d\sigma(\theta)
\end{equation}
and if $\psi$ is non zero then at least one component $\psi_j(r)$ is non zero for some $j\geq 0$. 
Of course $\psi_j(1)=0$ for every $j$, moreover $\psi_j\in \mathcal H_{0,N}$ because
\[\begin{split}
&\int_0^1 r^{N-3}\psi_j^2\, dr=\int_0^1 r^{N-3}\left(\int_{S^{N-1}}\psi(r,\theta)Y_j(\theta)\, d\sigma(\theta)\right)^2\, dr\\ 
& \underset{\text{Jensen}}{\le} \int_0^1 r^{N-3}\int_{S^{N-1}}\left(\psi(r,\theta)Y_j(\theta)\right)^2\, d\sigma(\theta) \,dr\\
& \leq
\|Y_j\|_{\infty}^2\int_0^1\int_{S^{N-1}}r^{N-3} \psi^2 (r,\theta)\, d\sigma(\theta) \,dr=\|Y_j\|_{\infty}^2\int_B |x|^{-2}\psi^2(x)\, dx<\infty
\end{split}\]
since $\psi\in \mathcal{H}_0$ and similarly
\[\begin{split}
& \int_0^1 r^{N-1} \left(\psi_j'\right)^2\, dr =\int_0^1 r^{N-1}\left(\int_{S^{N-1}}\psi'(r,\theta)Y_j(\theta)\, d\sigma(\theta) \right)^2\, dr\\ 
& \underset{\text{Jensen}}{\leq} \int_0^1 r^{N-1}\int_{S^{N-1}}\left(\psi'(r,\theta)Y_j(\theta)\right)^2\, d\sigma(\theta)  \,dr\\
& \leq
\|Y_j\|_{\infty}^2\int_0^1\int_{S^{N-1}}r^{N-1} \left(\psi'(r,\theta)\right)^2\, d\sigma(\theta) \,dr = \|Y_j\|_{\infty}^2 \int_B |\nabla \psi|^2\, dx .
\end{split}\]
By \eqref{psi-j}, for every $\varphi\in \mathcal H_{0,N}$
  we have 
  \begin{align} \nonumber 
      \int\limits_0^1r^{N-1}\psi_j' \varphi' \ dr=\int\limits_0^1 \int\limits_{S^{N-1}} r^{N-1}  \frac {\partial \psi}{\partial r} Y_j(\theta)\varphi'dr d\sigma(\theta) =\int\limits_0^1 \int\limits_{S^{N-1}} r^{N-1} \frac {\partial \psi}{\partial r}\frac {\partial Y_j(\theta)\varphi  }{\partial r}\ dr d\sigma(\theta) 
    \intertext{and using that $ \psi$ solves \eqref{singular-eigenvalue-problem} and that $a=a(|x|)$ is radial then}
 \nonumber 
      =-\int\limits_0^1 \int\limits_{S^{N-1}} r^{N-3}\nabla _{\theta} \psi\cdot \nabla _{\theta}(Y_j(\theta)\varphi)\ dr d\sigma(\theta)
      +\int\limits _0^1 \int\limits_{S^{N-1}} r^{N-1}a(|x|) \psi Y_j(\theta)\varphi\ dr d\sigma(\theta)  
      \\ \nonumber
      +\widehat \Lambda \int\limits_0^1 \int\limits_{S^{N-1}} r^{N-3} \psi Y_j(\theta)\varphi\ dr d\sigma(\theta)
      \\ \nonumber
      =-\int\limits_0^1 r^{N-3}\ dr  \int\limits_{S^{N-1}}\nabla _{\theta}( \psi \varphi) \cdot \nabla _{\theta}Y_j(\theta)\ d\sigma(\theta)
      +\int\limits _0^1r^{N-1}a(r)\varphi  \psi_j  \ dr
       +\widehat \Lambda \int\limits_0^1 r^{N-3} \varphi \psi_j \ dr 
     \\\nonumber
       = -\l_j\int\limits_0^1  r^{N-3}\varphi\ dr\int\limits_{S^{N-1}} \psi Y_j(\theta)\ d\sigma(\theta) +\int\limits _0^1r^{N-1}a(r)\varphi  \psi_j  \ dr +\widehat \Lambda \int\limits_0^1 r^{N-3} \varphi \psi_j \ dr 
       \\  \label{mi-serve-dopo}
       =\int\limits _0^1r^{N-1}a(r)\varphi  \psi_j  \ dr+\big(\widehat \Lambda-\l_j\big)\int\limits_0^1 r^{N-3} \varphi \psi_j \ dr 
    \end{align}
  meaning that $\psi_j$ is a weak solution to \eqref{radial-singular-problem} corresponding to $\widehat \Lambda-\l_j$.

From \eqref{mi-serve-dopo} and the characterization of the radial singular eigenvalues in Proposition \ref{c-i-attained}  we have that the value $\widehat \L-\l_j$, which is strictly less than $\left(\frac {N-2}2\right)^2$, is a radial singular eigenvalue for $L_a$ as defined in \eqref{radial-singular}, namely \eqref{decomposition} holds for some $k\geq 1$.

The reverse implication holds as well, namely if $\widehat \L_k^{\rad}+\l_j<\left(\frac {N-2}2\right)^2 $ for some radial singular eigenvalue $\widehat \L_k^{\rad}$ with associated eigenfunction $ \psi_k^{\rad}\in \mathcal H_{0,\rad}$
and for one eigenvalue $\l_j$ of Laplace Beltrami, then the function $\Psi:= \psi_k^{\rad}(r)Y_j(\theta)$ belongs to $\mathcal{H}_0$. Indeed
\[\int_B\frac {\Psi^2}{|x|^2}\, dx= \int_0^1 r^{N-3} \left( \psi_k^{\rad}\right)^2\, dr \int_{S^{N-1}}Y_j^2(\theta)\, d\sigma(\theta) \leq C ,\]
\[\begin{split}
\int\limits_B|\nabla \Psi|^2\, dx\leq & C\int\limits_0^1 r^{N-1}\left(\left( \psi_k^{\rad}\right)'\right)^2\int\limits_{S^{N-1}}Y_j^2(\theta)\, d\sigma(\theta) \\
&+\int\limits_0^1 r^{N-3} \left( \psi_k^{\rad}\right)^2\, dr\int\limits_{S^{N-1}}|\nabla_{\theta}Y_j|^2\, d\sigma(\theta) \leq C.\end{split}\]
Moreover $\Psi$ weakly solves \eqref{singular-eigenvalue-problem} corresponding to $\widehat \L=\widehat \L_k^{\rad}+\l_j<0$. Indeed let $\varphi\in \mathcal H_0$ then
\begin{align*}
\int\limits_B \nabla \Psi\nabla \varphi\, dx =\int\limits_B \frac{\partial \Psi}{\partial r}  \frac{\de \varphi}{\de r}+\frac 1{r^2}\nabla_{\theta}\Psi\nabla_{\theta}\varphi\, dx\\
=\int\limits_{S^{N-1}}Y_j(\theta)\, d\sigma(\theta) \int\limits_0^1r^{N-1}\left( \psi_k^{\rad}\right)'\frac{\partial \varphi}{\partial r}\, dr
+\int\limits_0^1r^{N-3} \psi_k^{\rad}\, dr\int\limits_{S^{N-1}}\nabla_{\theta}Y_j\nabla_{\theta}\varphi\, d\sigma(\theta) \\
=\int\limits_{S^{N-1}}Y_j(\theta)\, d\sigma(\theta) \int\limits_0^1r^{N-1}\left(a(r)+\frac{\widehat \L_k^{\rad}}{r^2}\right) \psi_k^{\rad}\varphi\, dr
+\int\limits_0^1r^{N-3} \psi_k^{\rad}\, dr\l_j\int\limits_{S^{N-1}}Y_j(\theta)\varphi\, d\sigma(\theta)\\
=\int\limits_Ba(|x|)\Psi\varphi+\frac {\widehat \L_k^{\rad}+\l_j}{|x|^2}\Psi\varphi\, dx
\end{align*}
\end{proof}

When $N\ge 3$ the decomposition in \eqref{decomposition} holds as well in the case that $ L_a$ admits $0$ as an eigenvalue. The case $N=2$ is more delicate because in that case a regular eigenfunction does not necessarily belong to ${\mathcal{H}}_{0}$. Nevertheless a similar decomposition continues to hold, indeed we have the following:

\begin{proposition}\label{prop-prel-5}
	Let $a$ be any radial function in $L^{\infty}(B)$. 
When  $N=2$ the equation $L_a=0$ admits a  solution if and only if either 
\begin{equation}\label{decom-zero}
\widehat \L_k^\rad=-\l_j
\end{equation}
for some $j,k\geq 1$ and the corresponding solutions have the expression in \eqref{decomp-autof},
or $L_a=0$ has a radial solution in $H^1_{0,N}$ (i.e. $0$ is a radial eigenvalue as defined in \eqref{Rayleigh-rad}).
\end{proposition}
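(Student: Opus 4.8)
The plan is to prove both implications by decomposing a putative solution along spherical harmonics and exploiting the fact that, in contrast with the case $N\ge 3$, here $\mathcal H_0\subsetneq H^1_0(B)$. The whole point is a dichotomy between the radial and the non-radial Fourier modes: writing $w=\sum_{j\ge 0}\psi_j(r)Y_j(\theta)$ for $w\in H^1_0(B)$, the Dirichlet energy reads $\int_B|\nabla w|^2\,dx=\sum_j \big(\int_0^1 r(\psi_j')^2\,dr+\l_j\int_0^1 r^{-1}\psi_j^2\,dr\big)\,\|Y_j\|_{L^2(S^{N-1})}^2$, so for every $j\ge 1$ (where $\l_j=j^2>0$) finiteness of the energy forces $\int_0^1 r^{-1}\psi_j^2\,dr<\infty$, i.e.\ $\psi_j\in\mathcal H_{0,N}$; for $j=0$ the coefficient $\l_0=0$ removes this constraint, which is exactly why a radial $H^1_0$--solution need not lie in $\mathcal H_0$.

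For the forward direction I would let $w\in H^1_0(B)\setminus\{0\}$ solve $L_aw=0$ weakly, i.e.\ $\int_B(\nabla w\nabla\phi-aw\phi)\,dx=0$ for all $\phi\in H^1_0(B)$, and decompose it as above. Fixing $j\ge 1$ with $\psi_j\neq 0$, for any $\varphi\in\mathcal H_{0,N}$ the test function $\phi=Y_j(\theta)\varphi(r)$ belongs to $H^1_0(B)$ (again because $\l_j>0$ keeps its energy finite). Inserting $\phi$ into the weak equation and carrying out verbatim the computation \eqref{mi-serve-dopo} with $\widehat\L=0$ shows that $\psi_j\in\mathcal H_{0,N}$ weakly solves the radial singular problem \eqref{radial-singular-problem-M} with eigenvalue $-\l_j$. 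Since $-\l_j<0=\big(\frac{N-2}{2}\big)^2$, Proposition \ref{c-i-attained} identifies $-\l_j$ with some radial singular eigenvalue, giving \eqref{decom-zero}, and simplicity (Property 5) forces $\psi_j$ to be a multiple of $\psi_k^{\rad}$, whence the non-radial part of $w$ has the form \eqref{decomp-autof}. If instead $\psi_j\equiv 0$ for all $j\ge 1$, then $w=\psi_0(r)Y_0$ is radial and solves $L_aw=0$ in $H^1_{0,\rad}(B)=H^1_{0,N}$, which is the second alternative.

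For the reverse direction, suppose first \eqref{decom-zero} holds for some $j,k\ge 1$. Because $-\l_j<0$ is strictly below the threshold $\big(\frac{N-2}{2}\big)^2=0$, Proposition \ref{c-i-attained} furnishes an eigenfunction $\psi_k^{\rad}\in\mathcal H_{0,N}$ for $\widehat\L_k^{\rad}=-\l_j$; the explicit verification in the reverse part of the proof of Proposition \ref{prop-prel-4} then shows that $\Psi:=\psi_k^{\rad}(r)Y_j(\theta)\in\mathcal H_0$ weakly solves \eqref{singular-eigenvalue-problem} with $\widehat\L=\widehat\L_k^{\rad}+\l_j=0$, i.e.\ $L_a\Psi=0$; since $\mathcal H_0\subseteq H^1_0(B)$ this is a non-trivial solution. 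If instead $0$ is a radial eigenvalue in $H^1_{0,N}$, its eigenfunction solves $L_a=0$ directly.

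The main obstacle, and the reason this proposition is not a corollary of Proposition \ref{prop-prel-4}, is that the latter is stated only for eigenvalues strictly below $\big(\frac{N-2}{2}\big)^2$, whereas the singular eigenvalue relevant here is exactly the threshold value $0$. I would therefore re-read the proof of Proposition \ref{prop-prel-4} and observe that both the forward computation \eqref{mi-serve-dopo} and the reverse verification are algebraic identities in the weak formulation that remain valid when $\widehat\L=0$; the strict inequality was needed there only to guarantee attainment of the radial eigenfunction, and here that attainment is supplied by the strictly negative value $\widehat\L_k^{\rad}=-\l_j$ of the radial component. The secondary technical care goes into the two membership claims $\psi_j\in\mathcal H_{0,N}$ and $Y_j\varphi\in H^1_0(B)$, both of which rely essentially on $\l_j>0$, i.e.\ on working with genuinely non-radial modes.
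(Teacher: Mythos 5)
Your proof is correct and follows the same overall skeleton as the paper's: decompose a solution $w$ of $L_aw=0$ along spherical harmonics, show each nonradial mode $\psi_j$ ($j\ge 1$) lies in $\mathcal H_{0,N}$ and weakly solves the singular radial problem with eigenvalue $-\l_j$ (via the computation \eqref{mi-serve-dopo} with $\widehat\L=0$), identify $-\l_j$ with some $\widehat\L_k^{\rad}$ through the variational characterization, and observe that the zero mode $\psi_0$ solves the regular radial problem with eigenvalue $0$ but need not belong to $\mathcal H_{0,N}$. The one step you handle genuinely differently is the membership $\psi_j\in\mathcal H_{0,N}$ for $j\ge 1$: the paper invokes elliptic regularity (Remark \ref{regolarita} gives $w\in C^{1,\beta}(\bar B)$, whence $|w_j(r)|\le Cr$ and $\int_0^1 r^{-1}w_j^2\,dr<\infty$), whereas you extract the same integrability directly from the Parseval decomposition of the Dirichlet energy, where the angular term $\l_j\int_0^1 r^{-1}\psi_j^2\,dr$ appears with coefficient $\l_j=j^2>0$. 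Your route is more elementary in that it uses only the $H^1$ structure and makes transparent why the mode $j=0$ is exceptional ($\l_0=0$ removes the constraint); the paper's route yields the stronger pointwise bound $w_j(r)=O(r)$, which is not needed here. Both arguments are sound, and your explicit treatment of the reverse implication (reusing the converse computation from Proposition \ref{prop-prel-4}, which remains valid at the threshold value $\widehat\L=0$ because attainment is supplied by the strictly negative radial eigenvalue $\widehat\L_k^{\rad}=-\l_j$) fills in a direction the paper leaves implicit.
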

\begin{proof}
Let $w\in H^1_0(B)$ solves \eqref{eigenvalue-problem} with $\Lambda =0$. By Remark \ref{regolarita} then $w\in  C^{1,\beta}(\bar B)$.
Projecting $w$ along the spherical harmonics $Y_j=A_j\cos j\theta +B_j \sin j\theta$, $j\geq 0$  for $\theta\in [0,2\pi]$ and for suitable constants $A_j,B_j\in \R$ gives a sequence   
\begin{equation}\label{sequence-w-j}
w_j(r):=\int_0^{2\pi} w(r\cos \theta,r\sin\theta) \big(A_j\cos j\theta +B_j \sin j\theta\big) \ d\theta\end{equation}
defined for $r\in(0,1]$ and, by the regularity of $w$
\[\lim_{r\to 0}w_j(r)=w(0,0)\int_0^{2\pi} \big(A_j\cos j\theta +B_j \sin j\theta\big) \ d\theta\]
so that $\lim_{r\to 0}w_j(r)=0$ for  $j\geq 1$, while $\lim_{r\to 0}w_0(r)=A_0w(0,0)$ meaning that $w_0$ does not belong to $\mathcal H_{0,N}$, in general. \\
In any case by the regularity of $w$ for any $\varphi\in H^1_{0,N}$  we have that
\[\begin{split}
& \int_0^1 r^{N-1}w_0'\varphi' \ dr=\int_0^1 r^{N-1}\varphi' \int_0^{2\pi} \frac{\partial}{\partial r}w(r\cos \theta,r\sin\theta)\ d\theta\ dr
\intertext{and  using that $w$ solves \eqref{eigenvalue-problem} with $\Lambda =0$ gives}
&=\int_0^1\int_0^{2\pi}r^{N-1}\Big(-\frac 1{r^2}\frac{\partial w}{\partial \theta} \frac{\partial \varphi}{\partial \theta} +a(r)w \varphi\Big)\ d\theta\ dr=\int_0^1r^{N-1}a(r) w_0(r)\varphi \ dr
\end{split}\]
meaning that $w_0$ is a weak solution to \eqref{radial-eigenvalue-problem} corresponding an eigenvalue $\L_i^\rad=0$ for some $i\geq 1$. 

When $j\geq 1$ instead by \eqref{sequence-w-j} and the regularity of $w$ we have 
\[\begin{split}
& \frac{\partial}{\partial r}w_j(r)=\frac{\partial}{\partial r}\int_0^{2\pi}w(r\cos \theta,r\sin\theta) \big(A_j\cos j\theta +B_j \sin j\theta\big) \ d\theta\\
&=\int_0^{2\pi}\frac{\partial}{\partial r}w(r\cos \theta,r\sin\theta) \big(A_j\cos j\theta +B_j \sin j\theta\big) \ d\theta\\
&=\int_0^{2\pi}\Big[ \frac{\partial w}{\partial x_1}\cos \theta+  \frac{\partial w}{\partial x_2}\sin \theta\Big]\big(A_j\cos j\theta +B_j \sin j\theta\big) \ d\theta
\end{split}\]
so that
\[\lim_{r\to 0}\frac{\partial}{\partial r}w_j(r)=\int_0^{2\pi}\Big[ \frac{\partial w}{\partial x_1}(0,0)\cos \theta+  \frac{\partial w}{\partial x_2}(0,0)\sin \theta\Big]\big(A_j\cos j\theta +B_j \sin j\theta\big) \ d\theta\]
which implies
\[\big|\frac{\partial w_j}{\partial r}(r)\big|\leq C \ \ \ \text{ and}\ \ \ |w_j(r)|\leq Cr.\]
Then $w_j\in \mathcal H_{0,N}$ and then, reasoning as in \eqref{mi-serve-dopo} gives that $w_j$ is a weak solution to \eqref{radial-singular-problem-M} corresponding to $-\l_j$.  This proves \eqref{decom-zero}.
\end{proof}

Eventually  we are in the position to address the semilinear problem
\[\tag{\ref{general-f}}
\left\{\begin{array}{ll}
-\Delta u = f(|x|,u) \qquad & \text{ in } B, \\
u= 0 & \text{ on } \partial B.
\end{array} \right.
\]
As explained in the introduction we assume that $f$ satisfies A.1 and 
we consider weak solutions $u$ that satisfies A.2, namely such that $f_u(|x|,u(x))\in L^{\infty}(B)$, so that the linearized operator
\begin{align}
\label{linearized}
L_u(w)&:=-\Delta w-f_u(|x|,u)w
\intertext{ and the associated quadratic form }
\label{forma-quadratica}
{\mathcal Q}_u(w)& :=\int_B\left(|\nabla w|^2 -f_u(|x|,u)\,w^2\right) dx
\end{align}
fall within the analysis performed in  the previous sections.
In the remaining of this section  $\Lambda_i$, $\L^{\rad}_i$, $\widehat\L_i$, and $\widehat{\L}^{\rad}_i$ stand for the eigenvalues defined respectively in \eqref{Rayleigh}, \eqref{Rayleigh-rad},  \eqref{i+1-singular}, and \eqref{radial-singular}, with $a=f_u(|x|,u(x))$.	
Recall that a weak solution $u$ is said degenerate if the linearized equation $L_u w=0$ admits a nontrivial weak solution $w\in H^1_0(B)$, equivalently if
$\L_i=0$ for some index $i$. The Morse index of $u$, that we denote hereafter by $m(u)$, is instead the maximal dimension of a subspace of $H^1_0(B)$ in which the quadratic form $\mathcal Q_u$ is negative defined, or equivalently, since $L_u$ is compact, is the number, counted with multiplicity, of
$\L_i<0$. As $u$ is radial, we can consider the linearized operator $L_u$ and the quadratic form $\mathcal Q_u$ restricted to some symmetric space $H^1_{0,\mathcal G}(B)$, see \eqref{H-1-G}, where $\mathcal G$ stands for a subgroup of the orthogonal group $O(N)$. 
Then we can say that $u$ is $\mathcal{G}$-degenerate if $\L_i^{\mathcal G}=0$ for some index $i$
and the $\mathcal{G}$-Morse index of $u$ is the number, counted with multiplicity, of the negative eigenvalues $\L_i^{\mathcal G}$.
Finally when $\mathcal G =O(N)$ we say that $u$ is radially degenerate if $\L_i^{\rad}=0$ for some index $i$
and the radial Morse index is number of the negative radial eigenvalues $\L_i^{\rad}$.

\

Since we are in the framework of Section \ref{se:singular}, Propositions \ref{prop-prel-1} and \ref{prop-autov-rad} directly yield 
Proposition \ref{prop-1-2bis}. Furthermore  Theorem \ref{general-morse-formula} can be proved starting from  Proposition \ref{prop-prel-1}. 

\begin{proof}[Proof of  Theorem \ref{general-morse-formula}]
	By Proposition \ref{prop-prel-1} the Morse index of $u$ can be computed by summing the multiplicity of the negative singular eigenvalues $\widehat\L_i$ defined in \eqref{i+1-singular} with $a=f_u(|x|,u(x))$. Besides when $u$ is radial Proposition \ref{prop-prel-4} applies so that $\widehat\L_i <0$ iff $\widehat\L_i=\widehat\L^{\rad}_k+\lambda_j$ for some radial singular eigenvalue $\widehat\L_k^{\rad}<0$  and the related eigenfunctions are described by formula \eqref{decomp-autofunz-f(u)}. 
 Since  all the radial singular eigenvalues $\widehat\L_k^{\rad}$ are simple by the Property 5 recalled in Subsection \ref{sse:general}, the multiplicity of $\widehat\L_i$ as an eigenvalue of \eqref{i+1-singular} is obtained by summing an amount $N_j$ (the multiplicity of $\l_j$ as an eigenvalues of the Laplace-Beltrami operator recalled in \eqref{multiplicity-beltrami}) for every index $k$ such that $\widehat\L_k^{\rad}= \widehat\L_i -\l_j$.
	From the inverse viewpoint, the contribution of every  radial singular eigenvalues $\widehat\L_k^{\rad}$ to the Morse index of $u$ is obtained by summing an amount $N_j$ for every index $j$ such that $\widehat\L^{\rad}_k+\lambda_j<0$.
	Formula \eqref{tag-2} readily follows remembering \eqref{eigen-beltrami}.
\end{proof}

Eventually Proposition \ref{non-degeneracy} follows by Propositions \ref{prop-prel-0}, \ref{prop-autov-rad}, via the decomposition in Proposition \ref{prop-prel-4} (for $N\ge 3$) and \ref{prop-prel-5} (for $N=2$).

As a corollary of the decomposition of the singular eigenvalues we obtain also a formula to compute the Morse index and characterize the degeneracy in symmetric spaces. To this end we let $\mathcal G$ be any subgroup of the orthogonal group $O(N)$.
\begin{corollary}\label{cor-morse-sim}
	Assume { A.1}, take $u$ be a  radial weak solution to \eqref{general-f} satisfying {A.2} and let $m_{\rad}$ its radial Morse index. Then the $\mathcal G$-Morse index of $u$ is given by
	\begin{equation}\label{tag-2-sym}\begin{split}
	m^{\mathcal{G}}(u)=& \sum\limits_{i=1}^{m_\rad}\sum\limits_{j=0}^{\lceil J_i -1\rceil } N_j^{\mathcal G} \qquad \quad \mbox{where} \\
	J_i= & \sqrt{\left(\frac{\n-2}{2}\right)^2-\widehat\L^{\rad}_i}-\frac{\n-2}{2}
	\end{split}\end{equation}
	and $N_j^{\mathcal G}$ stands for the multiplicity of the $j^{th}$ eigenvalue of the Laplace-Beltrami operator in $H^1_{0,\mathcal G}$.
	Moreover $u$ is $\mathcal G$-degenerate if and only if it is radially degenerate or
	\begin{equation}\label{non-radial-degeneracy-sym}
	\widehat \L_k^{\rad} =  -j (N-2+j)\qquad \mbox{for some $k, j \ge 1$}\end{equation}	
	for some $j$ such that $N_j^{\mathcal G}\neq 0$.
\end{corollary}
Here $N_j^{\mathcal G}$ stands for the number of the $j^{th}$-spherical harmonics which are $\mathcal G$-invariant and, depending on $\mathcal G$, can be zero for many values of $j$.

\subsection{H\'enon type problems}

Here we focus on the particular case of H\'enon type nonlinearities, namely
\begin{equation}\label{general-f-H-5}
\left\{\begin{array}{ll}
-\Delta u = |x|^{\alpha}f(u) \qquad & \text{ in } B, \\
u= 0 & \text{ on } \partial B,
\end{array} \right.\end{equation}
where $\a\ge 0$ is a real parameter, under the  general assumptions 
\begin{enumerate}[{H.}1]
	\item $f\in W^{1,1}_{\loc}(\R)$,
	\item  $u$ is a weak solution to \eqref{general-f-H-5} such that $a(x):=f'(u(x))\in L^{\infty}(B)$.
\end{enumerate}
Let us remark that, thanks to the equivalence between $H^1_{\rad}(B)$ and $H^1_N$ pointed out in \cite[Theorem 2.3]{DFetal},  $u(x)$ is a weak radial solution to \eqref{general-f-H-5} if and only if  the real function $u(r)=u(x)$ for $r=|x|$ belongs to $H^1_{0,N}$ and verify
	\begin{equation}\label{henon-radial-weak-sol}
	\int_0^1 r^{N-1}  u' \phi' dr =  \int_0^1 r^{N-1+ \a} f(u)\phi \, dr 
	\end{equation} 
	for every test function $\varphi \in H^1_{0,N}$.
Besides a classical radial solution is $u\in C[0,1]\cap C^2[0,1)$  which satisfies
\begin{equation}\label{general-f-radial}
\begin{cases} 
-u''-\frac{N-1}{r}u' =  r^\a f(u) & \qquad \text{ as } 0<r<1 , \\
u'(0)=0 , \quad u(1)=0. &
\end{cases}\end{equation}

When $\a=0$ \eqref{general-f-H-5} becomes the autonomous problem
\begin{equation}\label{general-f-auto-5}
\left\{\begin{array}{ll}
-\Delta u = f(u) \qquad & \text{ in } B, \\
u= 0 & \text{ on } \partial B.
\end{array} \right.\end{equation}
Indeed  the relation between \eqref{general-f-H-5}  and \eqref{general-f-auto-5} is deeper: 
in the radial setting they are linked by means of the transformation
\begin{equation}\label{transformation-henon}
t=r^{\frac{2+\a}{2}} ,\qquad w(t)=u(r) ,
\end{equation}
which has been introduced in \cite{GGN} and maps any  radial solution $u$ of 
\eqref{general-f-H-5}   into a  solution $w$ of  
\begin{equation}\label{lane-emden-radial-5}
\begin{cases}
- \left(t^{M-1} w^{\prime}\right)^{\prime}= \left(\frac{2}{2+\a}\right)^2 t^{M-1} f(w)  , \qquad  & 0<t< 1, \\
w'(0)=0, \quad w(1)=0
\end{cases}\end{equation}
where
\begin{align}\label{Malpha}
M & = M(N,\alpha):= \frac{2(N+\alpha)}{2+\alpha}\in[2,N]  .
\end{align} 
Again,  by  weak solution to \eqref{lane-emden-radial-5} we mean a function $w\in H^1_{0,M}$ such that
	\begin{equation}\label{lane-emden-radial-weak-sol}
	\int_0^1 t^{M-1}  w' \varphi' dt = \left(\frac{2}{2+\a}\right)^2  \int_0^1 t^{M-1} f(w)\varphi \, dt  
	\end{equation} 
	for any test function $\varphi\in H^1_{0,M}$, while clearly a classical solution is $w\in C[0,1]\cap C^2[0,1)$  which satisfies \eqref{lane-emden-radial-5}.
	The weak formulation \eqref{lane-emden-radial-weak-sol} is natural because when $M$ is an integer then $w$ satisfying \eqref{lane-emden-radial-weak-sol} is a  radial solution for an autonomous problem  of type \eqref{general-f-auto-5} in dimension $M$ with $\left(\frac{2}{2+\a}\right)^2 f$ instead of $f$. 
	In any case in the setting of assumptions H.1 and H.2 the change of variables \eqref{transformation-henon} creates a one-to one correspondence between radial solutions to \eqref{general-f-H-5}  and solutions to the ODE \eqref{lane-emden-radial-5}, both in classical and in weak sense.
	
	Let us better clarify these facts, together with the properties of the transformation \eqref{transformation-henon}.
	
	\begin{lemma}\label{H-M-N}
		Let $M$ and $N$ be linked by \eqref{Malpha}. The transformation \eqref{transformation-henon} is a bijection between   $H^1_{0,N}$ and  $H^1_{0,M}$, and also between $\mathcal H_{0,N}$ and $\mathcal H_{0,M}$.
	\end{lemma}
	\begin{proof}
	It is easy to see that $u(r)$ has a weak derivative $u'\in L^1_{\loc}(0,1)$ if and only if the same holds for $w(t)=u(r)$  with 
	\begin{equation}
	\label{ecco} u'(r)=\frac{2+\a}{2} r^{\frac{\a}{2}} w'\left(r^{\frac{2+\a}{2}}\right) \qquad \text{a.e.}
	\end{equation} 
	Actually for any smooth function $\phi\in C_0^{\infty}(0,1)$, performing the change of variables \eqref{transformation-henon} and writing $\varphi(t)=\phi(r)$ one sees that
	\begin{align*}\label{ecco } 
	-\int_0^1 u(r) \phi'(r) dr & = -\int_0^1 w(t) \varphi'(t) dt = \int_0^1 w'(t) \varphi(t) dt = \int_0^1 \frac{2+\a}{2} r^{\frac{\a}{2}} w'\left(r^{\frac{2+\a}{2}}\right) \phi(r) dr . \end{align*}
	In particular
	\begin{align*}
	\int_0^1 r^{N-1} \left(u'(r)\right)^2 dr & = \left(\frac{2+\a}{2}\right)^2\int_0^1 r^{N-1+\alpha} \left(w'(r^{\frac{2+\a}{2}})\right)^2 dr = \frac{2+\a}{2}\int_0^1 t^{M-1} \left(w'(t)\right)^2 dt 
	\end{align*}
	which shows that $u'\in L^2_{N}$ if and only if $w'\in L^2_{M}$.\\
	We go on and check that  $u\in H^1_{0,N}$ if and only if $w\in H^1_{0,M}$. If $u\in H^1_{0,N}$ then $w\in H^1_{0,M}$ because
	\begin{align*}
	\int_0^1 t^{M-1} w^2(t) dt  & = \frac{2+\a}{2}\int_0^1 r^{N-1+\alpha} u^2(r) dr \le \frac{2+\a}{2}\int_0^1 r^{N-1} u^2(r) dr <+\infty.
	\end{align*}
	On the other hand if $w\in H^1_{0,M}$ then $u$ has a weak derivative and fulfills \eqref{ffci} with $u(1)=0$, so that  Jensen's inequality gives 
	\begin{align*}
	\int_0^1 r^{N-1} u^2(r) dr & = \int_0^1 r^{N-1} \left(\int_r^1 u'(\rho) \, d\rho\right)^2 dr
	\le  \int_0^1 r^{N-1}(1-r) \int_r^1 \left(u'(\rho)\right)^2 d\rho \, dr \\
	& =\int_0^1\int_0^{\rho}r^{N-1}(1-r)  dr \left(u'(\rho)\right)^2 d\rho
	\le \frac{1}{N}\int_0^1 \rho^{N-1} \left(u'(\rho)\right)^2 dr \\
	& = \frac{2+\a}{2N}\int_0^1 t^{M-1} \left(w'(t)\right)^2 dt <+\infty .
	\end{align*}	
	Eventually $u\in \mathcal H_{0,N}$ if and only if $w\in \mathcal H_{0,M}$, since 
	\begin{align*}
	\int_0^1 t^{M-3} w^2(t) dt  & = \frac{2+\a}{2}\int_0^1 r^{N-3} u^2(r) dr .
	\end{align*}
\end{proof}

The equivalence between the two definitions of weak solutions, \eqref{henon-radial-weak-sol} and  \eqref{lane-emden-radial-weak-sol},  is a straightforward consequence of Lemma \ref{H-M-N}. 
\begin{proposition}\label{prop-4.1-weak} 
	If $u$ is a weak radial solution to \eqref{general-f-H-5}, then $w$ given by  \eqref{transformation-henon} is a weak solution to \eqref{lane-emden-radial-5} with $M$ as in \eqref{Malpha}. Conversely if  $w$ is a weak solution to \eqref{lane-emden-radial-5}, after choosing $\a\ge 0$ and $N\in \mathbb N$, $N\ge 2$ such that \eqref{Malpha} is satisfied, the function $u$ defined on the $N$-dimensional unit ball according to \eqref{transformation-henon} is a   weak radial solution to \eqref{general-f-H-5}.
\end{proposition}
\begin{proof}
Assume for instance that $w(t)$ satisfies  \eqref{lane-emden-radial-weak-sol} and take any $\phi\in 
	H^1_{0,N}$. Lemma \ref{H-M-N} assures that $u(r)=w(t)\in H^1_{0,N} $ and $\varphi(t)=\phi(r) \in H^1_{0,M}$ and both couples satisfy \eqref{ecco}. Therefore
	\begin{align*}
	\int_0^1 r^{N-1} u'(r) \phi'(r) dr  \underset{\text{\eqref{ecco}}}{=}  \left(\frac{2+\a}{2}\right)^2\int_0^1 r^{N-1+\a} w'(r^{\frac{2+\a}{2}}) \varphi'(r^{\frac{2+\a}{2}}) dr \\  \underset{\text{\eqref{transformation-henon}}}{=} \frac{2+\a}{2}\int_0^1 t^{M-1} w'(t) \varphi'(t) dt 
	\underset{\eqref{lane-emden-radial-weak-sol}}{=} 
	\frac{2}{2+\a}\int_0^1 t^{M-1} f\left(w(t)\right) \varphi(t) dt 
	\\
	\underset{\text{\eqref{transformation-henon}}}{=} \int_0^1 r^{N-1+\a} f\left(u(r)\right) \phi(r) dr ,
	\end{align*}
	which is \eqref{henon-radial-weak-sol}.
	The opposite implication holds similarly.
\end{proof}

The assumption H.2, that we have used so far to guarantee the compactness of the linearized operator $L_u$, actually has a strong regularizing effect.
\begin{proposition}\label{reg-zero} 	
Assume that $\a\geq 0$ and $f$ satisfies $H.1$ and take $u$ a radial weak solution to \eqref{general-f-H-5} satisfying $H.2$. Then $u\in C^2[0,1]$ is also a classical solution and satisfies $u'(0)=0$, and $u''(0)=0$ when $\a>0$, $u''= -\frac1N  f(u(0))$ when $\a=0$.
\end{proposition}
\begin{proof}
	Assumptions H.1 and H.2 imply that $f$ is continuous and that along a solution $u$ 
	\[|f(u)|=|f(0)+\int_0^u f'(t)\ dt|\leq | f(0)|+C|u|\leq C+C|u|.\]
	This inequality together with the same arguments used in the proof of Proposition 2.2  ensure that
	\begin{equation}\label{passo}
	u'(r)=-r^{1-N}\int_0^r s^{N-1+\a }f(u(s))\ ds
	\end{equation}
	which implies, since $u$ is continuous in $(0,1)$, (because $u\in H^1_{0,N}$)  that $u\in C^1(0,1)$. Next
	\[u(r)=u(1)-\int_r^1u'(t)\ dt=-\int_r^1t^{1-N}\int_0^t s^{N-1+\a } f(u(s))\ ds dt\]
	and then
	\[|u(r)|\leq C\int_r^1t^{1-N}\int_0^r s^{N-1+\a} (1+|u(s)|)\ ds dt\]
	which, together with the Radial Lemma (see \cite{Ni} or Lemma 5.2 here), proves that $u$ is bounded in $r=0$ and then $u\in C[0,1)$. We do not give the details since are exactly the same as in the proof of Proposition 2.2. This is enough to prove that $u'(r)$ has a limit as $r\to 0$, by \eqref{passo}, so that $u\in C^1[0,1)$ and $u'(0)=0$.
	\\
	Beside $r^{N-1}u'$ has a weak derivative $r^{N-1+\a}f(u(r))\in C(0,1)$ so that the same arguments of Remark 2.3  ensure 
	\begin{equation}\label{u-secondo}
	u''(r)=-\frac{N-1}r u'-r^\a f(u)  \ \text{ for  }r\in (0,1)\end{equation}
	which easily implies that $u\in C^2(0,1)$ since $f$ and $u$ are continuous. 
	Finally from \eqref{u-secondo} and by the l'Hopital theorem we have
	\begin{align*}
	&\lim_{r\to 0^+}u''(r)=\lim_{r\to 0^+}\frac{N-1}{r^N} \int_0^r s^{N-1+\a}f(u(s))\ ds-r^\a f(u)\\
	&\ \ =\lim_{r\to 0^+}\frac{N-1}{N}r^\a f(u)-r^\a f(u)=\left\{
	\begin{array}{ll}
	0 & \text{ when }\a>0\\
	-\frac1N  f(u(0))& \text{ when }\a=0
	\end{array}\right.
	\end{align*}
	which shows that $u$ admits a second derivative in $r=0$ and assures $u\in C^2[0,1)$. The regularity at $r=1$ follows in a similar way.
\end{proof}

\begin{remark}
	The proof of Proposition \ref{reg-zero} is valid for any nonlinearity $f(|x|, u)$ under assumptions A.1 and A.2.
\end{remark}

In the same way one can see that
\begin{corollary}\label{cor-reg-zero}
Assume that $f$ satisfies $H.1$ and take $w$ a weak solution to \eqref{lane-emden-radial-5} satisfying $H.2$.  Then $w\in C^2[0,1]$ is also a classical solution and satisfies $w'(0)=0$, and $w''= -\frac1N  f(u(0))$.
\end{corollary}

Consequently we get the following

\begin{proposition}\label{prop-4.1} 
	 Assume $\a\geq 0$ and let $u$ and $w$ be related as in \eqref{transformation-henon}, $M=M(N,\a)$ as in \eqref{Malpha}. 
	If $f$ satisfies H.1, then $u$ is a (weak or classical) radial solution to \eqref{general-f-H-5} satisfying H.2 if and only if $w$ is a (weak or classical) solution to  \eqref{lane-emden-radial-5} satisfying H.2.
\end{proposition}
\begin{proof}
 It follows by Propositions \ref{prop-4.1-weak}, \ref{reg-zero} and Corollary \ref{cor-reg-zero}.
\end{proof}

The transformation \eqref{transformation-henon} is useful also in computing the Morse index and examining the degeneracy of radial solutions to \eqref{general-f-H-5}. By Theorems \ref{general-morse-formula} and \ref{non-degeneracy} such issues are related to the radial singular eigenvalues $\widehat \L^{\rad}_k$ characterized by the Sturm-Liouville problem \eqref{radial-singular-problem} with $a(r)=r^{\a}f'(u)$ and, when $N=2$, also to the radial standard eigenvalues $\L_k^\rad$ characterized by \eqref{radial-eigenvalue-problem}. 
Performing the change of variable \eqref{transformation-henon} inside   \eqref{radial-singular-problem} brings to Sturm-Liouville problems of type \eqref{radial-singular-problem-M} with $M$ given by \eqref{Malpha} and $a(t)= \left(\frac{2}{2+\a}\right)^2 f'(w(t))$, i.e.
\begin{equation}\label{radial-general-H}
\left\{\begin{array}{ll}
- \left(t^{M-1} \phi'\right)'- \left(\frac{2}{2+\a}\right)^2 t^{M-1} f' (w(t)) \phi = t^{M-3} \widehat{\nu}  \phi & \text{ for } t\in(0,1)\\
\phi\in  \mathcal H_{0,M} . &
\end{array} \right.
\end{equation} 
It is easily seen that 

\begin{lemma}	\label{general-eigenvalue-H-LE}
 Assume that $\a\geq 0$ and $f$ satisfies $H.1$ and take $u$ a radial weak solution to \eqref{general-f-H-5} satisfying $H.2$. Then	$\widehat \L^{\rad}_i<\left(\frac{N-2}{2}\right)^2$ is a radial singular eigenvalue for the linearized operator $L_u$ if and only if 
	\begin{equation}
	\label{relazione-autov}
	\widehat{\nu}_i = \left(\frac{2}{2+\a}\right)^2  \widehat\L^{\rad}_{i} < \left(\frac{M-2}{2}\right)^2
	\end{equation}
	is an eigenvalue for \eqref{radial-general-H}.
	Further $\psi_i\in\mathcal H_{0,N}$ is an eigenfunction related to $\widehat \L^{\rad}_i$ if and only if $\psi_i(r)=\phi_i(t)$, where $\phi_i\in\mathcal{H}_{0,M}$ is an eigenfunction for problem \eqref{radial-general-H} related to  $\widehat{\nu}_i$.
	\end{lemma}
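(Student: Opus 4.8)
The plan is to transcribe the change-of-variable computation of Proposition \ref{prop-4.1} from the nonlinear equation to its linearization, treating the eigenvalue term as an extra zeroth-order contribution. Writing $a(r)=r^{\a}f_u(u)$, a radial singular eigenfunction $\psi_i\in\mathcal H_{0,N}$ associated with $\widehat\L_i^{\rad}$ is characterized, via \eqref{weak-radial-general} with $M=N$, by
\begin{equation*}
\int_0^1 r^{N-1}\left(\psi_i'\varphi'-r^{\a}f_u(u)\psi_i\varphi\right)dr=\widehat\L_i^{\rad}\int_0^1 r^{N-3}\psi_i\varphi\,dr
\end{equation*}
for every $\varphi\in\mathcal H_{0,N}$. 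I would set $\psi_i(r)=\phi_i(t)$ and $\varphi(r)=\chi(t)$ with $t=r^{\frac{2+\a}2}$ as in \eqref{transformation-henon}, so that by \eqref{ecco} one has $\psi_i'(r)=\frac{2+\a}2 r^{\frac\a2}\phi_i'(t)$ and likewise for $\varphi$; Proposition \ref{prop-4.1} already guarantees that $\chi$ ranges over all of $\mathcal H_{0,M}$ as $\varphi$ ranges over $\mathcal H_{0,N}$, and conversely.

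First I would record the two Jacobian identities obtained by substituting $r=t^{\frac{2}{2+\a}}$, namely $r^{N-1+\a}\,dr=\frac2{2+\a}\,t^{M-1}\,dt$ and $r^{N-3}\,dr=\frac2{2+\a}\,t^{M-3}\,dt$, both of which reduce to the definition \eqref{Malpha} of $M$ after simplifying the exponents. Applying these to the three integrals above — with the gradient term $\psi_i'\varphi'$ contributing the extra factor $\left(\frac{2+\a}2\right)^2 r^{\a}$ through \eqref{ecco} — turns the displayed identity into
\begin{equation*}
\frac{2+\a}2\int_0^1 t^{M-1}\phi_i'\chi'\,dt-\frac2{2+\a}\int_0^1 t^{M-1}f_u(w)\phi_i\chi\,dt=\widehat\L_i^{\rad}\,\frac2{2+\a}\int_0^1 t^{M-3}\phi_i\chi\,dt .
\end{equation*}
Dividing by $\frac{2+\a}2$ yields exactly the weak formulation of \eqref{radial-general-H} with $\widehat\nu_i=\left(\frac2{2+\a}\right)^2\widehat\L_i^{\rad}$, and since every step is an exact identity or a bijection of test functions, the argument reverses: $\widehat\L_i^{\rad}$ is a radial singular eigenvalue of $L_u$ if and only if $\widehat\nu_i$ is an eigenvalue of \eqref{radial-general-H}, with the stated correspondence of eigenfunctions $\psi_i(r)=\phi_i(t)$.

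It remains to match the thresholds. A short computation gives $\frac{M-2}2=\frac{N-2}{2+\a}$, whence $\left(\frac{M-2}2\right)^2=\left(\frac2{2+\a}\right)^2\left(\frac{N-2}2\right)^2$; as the proportionality factor $\left(\frac2{2+\a}\right)^2$ is positive, the constraint $\widehat\L_i^{\rad}<\left(\frac{N-2}2\right)^2$ is equivalent to $\widehat\nu_i<\left(\frac{M-2}2\right)^2$, which is precisely the range in which the variational characterization of Proposition \ref{c-i-attained} is available on both sides. I expect the only genuinely delicate point to be verifying that \eqref{transformation-henon} is a bijection of the weighted spaces — that $\psi_i\in\mathcal H_{0,N}$ if and only if $\phi_i\in\mathcal H_{0,M}$, including $\mathcal L_N\leftrightarrow\mathcal L_M$ membership near $r=0$ — but this follows from Proposition \ref{prop-4.1} for the $H^1$ components together with the identity $r^{N-3}\,dr=\frac2{2+\a}\,t^{M-3}\,dt$ for the singular-weight component, so no new estimate is required.
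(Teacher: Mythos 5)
Your proposal is correct and follows essentially the same route as the paper: both proofs transplant the weak formulation of the radial singular eigenvalue problem through the change of variables \eqref{transformation-henon}, using the derivative relation \eqref{ecco} and the Jacobian identities $r^{N-1+\a}\,dr=\tfrac{2}{2+\a}t^{M-1}\,dt$ and $r^{N-3}\,dr=\tfrac{2}{2+\a}t^{M-3}\,dt$, together with the bijection of test-function spaces supplied by Proposition \ref{prop-4.1}. The paper's proof runs the same computation starting from the $t$-side integral and is terser about the threshold identity $\left(\frac{M-2}{2}\right)^2=\left(\frac{2}{2+\a}\right)^2\left(\frac{N-2}{2}\right)^2$, which you verify explicitly; no substantive difference.
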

 \begin{proof}
Take  $\psi_i\in\mathcal H_{0,N}$ an eigenfunction related to $\widehat \L^{\rad}_i<\left(\frac{N-2}{2}\right)^2$ and   $\varphi\in {\mathcal H}_{0,M}$ any text function.  Performing the change of variables \eqref{transformation-henon} and  letting  $\phi_i(t)= \psi_i(r)$, $\tilde{\varphi}(r)= \varphi(t)$, 
 	then $\phi_i \in {\mathcal H}_{0,M}$, $\tilde{\varphi}\in \mathcal H_{0,N}$ by Lemma \ref{H-M-N} and 
 	\begin{align*}
	\left(\frac{2}{2+\a}\right)^2  \widehat\L^{\rad}_{i} \int_0^1 t^{M-3} \phi_i  \varphi dt
	=\frac{2}{2+\a}\widehat\L^{\rad}_{i} \int_0^1 r^{N-3} \psi_i \tilde\varphi dr \\
	= \frac{2}{2+\a}\int_0^1 r^{N-1} \left(\psi'_i \tilde\varphi' - r^{\a} f'(u) \psi_i\tilde\varphi\right) dr \\
	\underset{ \eqref{ecco}}{=} \frac{2}{2+\a}\int_0^1 r^{N-1} \left(\left(\frac{2+\a}{2}\right)^2 r^{\a}{\phi'_i}\left(r^{\frac{2+\a}{2}}\right)\varphi'\left(r^{\frac{2+\a}{2}}\right)- r^{\a} f'(u) {\phi_i\left(r^{\frac{2+\a}{2}}\right) \varphi\left(r^{\frac{2+\a}{2}}\right)}\right) dr \\
	= \int_0^1 t^{M-1} \left({\phi'_i}\varphi'-\left(\frac{2}{2+\a}\right)^2  f'(w) {\phi_i}\varphi\right) dt,
	\end{align*}
	which shows that ${\phi_i}$ solves \eqref{radial-general-H} with $\widehat{\nu}=\left(\frac{2}{2+\a}\right)^2 \widehat{\L}^{\rad}_i$. The opposite implication follows similarly.
\end{proof}

Thanks to Lemma \ref{general-eigenvalue-H-LE}  one can easily deduce from Proposition \ref{prop-prel-4}  the ad-hoc formula to compute the radial and general Morse index of H\'enon type problems stated in Proposition \ref{general-morse-formula-H}.

 \begin{proof}[Proof of Proposition \ref{general-morse-formula-H}]
Remembering the decomposition of the singular eigenvalues $\widehat \L_i$ in Proposition \ref{prop-prel-4}, 	Lemma \ref{general-eigenvalue-H-LE} implies that the singular eigenvalue problem associated to the linearized operator $L_u$, namely
	\[
	\left\{\begin{array}{ll}
	-\Delta  \psi_i-|x|^\a f'(u)\psi_i =\frac{\widehat\L_i}{|x|^2}\psi_i & \text{ in } B\setminus\{0\}\\
	\psi_i= 0 & \text{ on } \partial B,
	\end{array} \right.
	\]
	admits as singular eigenvalues 
	\[\widehat \L_i=\left(\frac{2+\a}2\right)^2 \widehat \nu_k+\l_j ,\]
	and the corresponding eigenfunctions are
	\[\psi_i(x)=\phi_k\big( r^{\frac {2+\a}2}\big) Y_j(\theta) ,\]
	where $\phi_k$ is the eigenfunction to \eqref{radial-general-H} associated with $\widehat \nu_k$.  
In that way we get the decomposition formula \eqref{decomp-autofunz-f(u)-H}, while Propositions \ref{prop-autov-rad} and \ref{prop-prel-1}  yield respectively the characterization of the radial and general Morse index. 
	\end{proof}

Similarly Propositions \ref{prop-prel-4} and   \ref{prop-prel-5} furnish the characterization  of degeneracy in Proposition \ref{non-degeneracy-H}.

\begin{proof}[Proof of Proposition \ref{non-degeneracy-H}]
	When $N\ge 3$ it is known by Propositions \ref{prop-prel-0} and \ref{prop-prel-4} that $u$ is radially degenerate if  $\widehat{\L}_k^{\rad}=\L^{\rad}_k=0$ for some index $k$, and degenerate if $\widehat \L_k^{\rad}=- j(N-2+j)$. So Lemma \ref{general-eigenvalue-H-LE}  immediately gives the statement.
	\\
	When $N=2$ instead,  by definition $u$ is radially degenerate if and only if $\L_k^{\rad}=0$ for some index $k$. In that case using Lemma \ref{H-M-N} it is easy to see that  	{ $\nu=0$ is an eigenvalue for
		\begin{equation*}
	\left\{\begin{array}{ll}
	- \left(t^{M-1} \phi'\right)'- \left(\frac{2}{2+\a}\right)^2 t^{M-1} f' (w(t)) \phi  = t^{M-1} {\nu} \, \phi & \text{ for } t\in(0,1)\\
	\phi'(0)=0 , \ \phi(1)=0  . &
	\end{array} \right.
	\end{equation*} }
 Moreover, recalling also Proposition  \ref{prop-prel-2}, $0$ must be the $k^{th}$ eigenvalue.
	For what concerns nonradial degeneracy, the claim follows  by Proposition \ref{prop-prel-5}  like in the case $N\ge 3$.	
\end{proof}

Furthermore the decomposition formula \eqref{decomp-autofunz-f(u)-H} gives information also on the symmetric Morse index since the symmetries of the eigenfunctions are exactly the symmetries of the Spherical Harmonics.  Thus Propositions \ref{prop-autov-rad}, \ref{prop-prel-4},  together with Lemma \ref{general-eigenvalue-H-LE},  yield also the following
\begin{corollary}\label{cor-morse-sim-H}
Assume that $\a\geq 0$ and $f$ satisfies $H.1$ and take $u$ a radial weak solution to \eqref{general-f-H-5} satisfying $H.2$. 
	If $m_{\rad}$ stands for its radial Morse index, then the $\mathcal G$-Morse index of $u$ is given by 
	\begin{equation}\label{tag-2-sym-H}\begin{split} 
	m^{\mathcal{G}}(u)=\sum\limits_{i=1}^{m_\rad}\sum\limits_{j=0}^{\lceil J_i -1\rceil } N_j^{\mathcal G}  \quad \mbox{where} \\
	{	J_i= \frac{2+\a}2\left( \sqrt{\left(\frac{M-2}{2}\right)^2-\widehat\nu_i}-\frac{M-2}{2}\right)}
	\end{split}\end{equation}
	and $N_j^{\mathcal G}$ stands for the multiplicity of the $j^{th}$ eigenvalue of the Laplace-Beltrami operator in $H^1_{0,\mathcal G}$.
	Moreover $u$ is $\mathcal G$-degenerate if and only if it is radially degenerate or
	\begin{equation*}\label{non-radial-degeneracy-sym-H}
	\widehat \nu_k =  -\Big(\frac 2{2+\a}\Big)^2j (N-2+j)\qquad \mbox{for some $k, j \ge 1$ such that $N_j^{\mathcal G}\neq 0$.}\end{equation*}	
\end{corollary}
Notice that $N_j^{\mathcal G}$ stands for the number of the $j^{th}$-spherical harmonics which are $\mathcal G$-invariant and, depending on $\mathcal G$, can be zero for many values of $j$.

\section{Appendix: some useful properties of the spaces $H^1_M$ and $H^1_{0,M}$}\label{appendix}

For any $M\in\R$, $M\ge 2$, and $q\ge 1$ we  denote by $L^q_M$ the set of measurable functions $v:(0,1)\to\R$ such that
			\[ \int_0^1 r^{M-1} |v|^q dr < +\infty.\]
			Clearly $L^2_M$ is a  Hilbert space endowed with the product 
			\[ \langle v,w\rangle_M = \int_0^1 r^{M-1} v \, w \, dr ,\]
			which yields the orthogonality condition
			\[ v \perp_M w \, \Longleftrightarrow \, \int_0^1 r^{M-1} v \, w \, dr = 0 .\]
			Next we denote by $H^1_{M}$ the subspace of $L^2_M$  made up by that functions $v$ which have weak first order derivative in $L^2_M$, so that the norm 
	\[ \|v\|_M = \left(\int_0^1 r^{M-1} \left( v^2+|v'|^2\right) dr\right)^{\frac{1}{2}} \]
	is bounded. 
	Further  by \cite[VIII.2]{Bbook}  any function in $v\in H^1_{M}$ is almost everywhere equal to a function $\tilde v\in C(0,1]$ which is differentiable almost everywhere with 
	\begin{equation}\label{ffci-app}
\tilde v(r_2)-\tilde v(r_1) = \int_{r_1}^{r_2} v'(r) dr .\end{equation}
Therefore we may assume w.l.g.~that any $v\in H^1_{M}$ is continuous in $(0,1]$ and satisfies \eqref{ffci-app}.
This allows to introduce the set
\[H^1_{0,M} = \left\{ v\in H^1_M \, : \, v(1)=0\right\} .\]
Moreover if $M=N$ is an integer then $H^1_N$ actually is equal to $H^1_{\rad}(B)$ by \cite[Theorem 2.2 (2)]{DFetal}, and therefore $H^1_{0,N}=H^1_{\rad,0}(B)$.
Most of the general properties of the Sobolev spaces $H^1_{\rad}(B)$ and $H^1_{0,\rad}(B)$ are valid also in $H^1_M$ and $H^1_{0,M}$. Let us recall the ones which turn useful in our computations.

\begin{lemma}[Poincar\'e inequality]
Let $M\ge 2$; for any $v\in H^1_{0,M}$ we have
	\[ \int_0^1 t^{M-1} v^2(t) dt \le \frac{1}{M-1} \int_0^1 t^{M-1} \left(v'(t)\right)^2 dt .\]
\end{lemma}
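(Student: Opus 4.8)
The plan is to exploit the Dirichlet condition $v(1)=0$ together with the absolute continuity \eqref{ffci-app}, which combine to give the representation $v(t)=-\int_t^1 v'(s)\,ds$ for every $t\in(0,1]$. From here a single application of the Cauchy--Schwarz inequality yields the pointwise bound
\[ v(t)^2 = \left(\int_t^1 v'(s)\,ds\right)^2 \le (1-t)\int_t^1 \left(v'(s)\right)^2\,ds \le \int_t^1 \left(v'(s)\right)^2\,ds, \]
valid for each fixed $t\in(0,1)$, since $\int_t^1 (v'(s))^2\,ds \le t^{1-M}\int_0^1 s^{M-1}(v'(s))^2\,ds<\infty$ by membership of $v'$ in $L^2_M$.

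First I would multiply this inequality by the weight $t^{M-1}$ and integrate over $(0,1)$. The key manoeuvre is then to apply Tonelli's theorem to interchange the order of integration in the resulting double integral over the triangle $\{0<t<s<1\}$; because the integrand $t^{M-1}(v'(s))^2$ is nonnegative, the swap is legitimate with no finiteness hypothesis (and if the right-hand side is infinite the claim is trivial). This gives
\[ \int_0^1 t^{M-1} v^2\,dt \le \int_0^1 \left(v'(s)\right)^2\left(\int_0^s t^{M-1}\,dt\right)ds = \frac{1}{M}\int_0^1 s^{M}\left(v'(s)\right)^2\,ds. \]
Finally, bounding $s^{M}\le s^{M-1}$ on $(0,1)$ and $\frac1M\le\frac1{M-1}$ (valid since $M\ge2$) delivers the assertion; in fact the argument produces the slightly sharper constant $\frac1M$.

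The only delicate point is the behaviour at the singular endpoint $t=0$, and the merit of routing the estimate through the representation formula and Tonelli is precisely that it sidesteps it: no boundary term at $0$ ever appears. I would deliberately avoid the alternative route of integrating $\int_0^1 (t^{M-1})' v^2\,dt$ by parts, tempting as it looks, since it forces one to verify that $t^{M-1}v^2\to0$ as $t\to0$, an extra regularity check (available through arguments like those in Proposition \ref{radial-regular-0}, but unnecessary here). Thus the whole proof reduces to three short lines once the representation $v(t)=-\int_t^1 v'(s)\,ds$ is in hand, and the main obstacle is merely the bookkeeping of integrability needed to license the Cauchy--Schwarz step for each $t$ and the order interchange.
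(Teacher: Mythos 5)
Your proof is correct and follows essentially the same route as the paper's: the representation $v(t)=-\int_t^1 v'(s)\,ds$ from \eqref{ffci-app}, Cauchy--Schwarz, and an interchange of the order of integration. The only (harmless) difference is where the factor $(1-t)$ is discarded — you keep the full weight $t^{M-1}$ and thereby obtain the marginally sharper constant $\frac1M$, whereas the paper bounds $t^{M-1}(1-t)\le t^{M-2}$ first and lands on $\frac1{M-1}$.
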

\begin{proof}
	By \eqref{ffci-app} we have
	\begin{align*}
	\int_0^1 t^{M-1} v^2(t) dt & = 	\int_0^1 t^{M-1} \left(\int_t^1 v'(s) ds\right)^2 dt \\
	& \le \int_0^1 t^{M-1}  (1-t) \int_t^1 \left(v'(s)\right)^2 ds \,dt \le \int_0^1 t^{M-2} \int_t^1 \left(v'(s)\right)^2 ds \,dt \\
	&= \int_0^1  \left(v'(s)\right)^2  \int_0^s t^{M-2} dt \,ds = \frac{1}{M-1} \int_0^1 s^{M-1}  \left(v'(s)\right)^2  ds.
	\end{align*}
\end{proof}
Therefore in the space $H^1_{0,M}$ one can use the equivalent norm in given by \[\|v\|_{0,M} = \left(\int_0^1 t^{M-1} |v'|^2 dt\right)^{\frac{1}{2}}.\]

\begin{lemma}[Radial Lemma]\label{radial-lemma}
	Let $M\ge 2$; for any $v\in H^1_{0,M}$ we have
	\begin{equation}\label{radial-lemma-estimate} |v(t)| \le \begin{cases}
	\displaystyle	\left(\int_0^1r^{M-1}\left(v'(r)\right)^2 dr\right)^{\frac{1}{2}} \frac{t^{-\frac{M-2}{2}}}{\sqrt{M-2}} \quad & \text{ if } M>2, \\
	\displaystyle	\left(\int_0^1r^{M-1}\left(v'(r)\right)^2 dr\right)^{\frac{1}{2}} \left|\log t\right|^{\frac{1}{2}} & \text{ if } M=2.
	\end{cases}\end{equation}
\end{lemma}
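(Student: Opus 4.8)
The plan is to derive the pointwise bound directly from the integral representation of $v$ guaranteed by \eqref{ffci-app}, followed by a single weighted Cauchy--Schwarz estimate. Since $v\in H^1_{0,M}$ we may assume $v$ is continuous on $(0,1]$ with $v(1)=0$, so that \eqref{ffci-app} applied with $r_2=1$, $r_1=t$ yields
\[ v(t) = -\int_t^1 v'(s)\, ds \qquad \text{for every } t\in(0,1]. \]
First I would take absolute values and split the weight $s^{M-1}$ symmetrically, writing $|v'(s)| = s^{-(M-1)/2}\cdot s^{(M-1)/2}|v'(s)|$, so that the Cauchy--Schwarz inequality in $L^2(t,1)$ gives
\[ |v(t)| \le \int_t^1 |v'(s)|\, ds \le \left(\int_t^1 s^{1-M}\, ds\right)^{\frac12}\left(\int_t^1 s^{M-1}\left(v'(s)\right)^2\, ds\right)^{\frac12}. \]

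The second factor is immediately controlled by the full integral $\int_0^1 r^{M-1}(v'(r))^2\,dr$, since the integrand is nonnegative and $(t,1)\subset(0,1)$. The only computation left is the explicit evaluation of the weight integral $\int_t^1 s^{1-M}\,ds$, which is where the two cases arise. For $M>2$ the antiderivative is $s^{2-M}/(2-M)$, giving
\[ \int_t^1 s^{1-M}\, ds = \frac{t^{2-M}-1}{M-2} \le \frac{t^{-(M-2)}}{M-2}, \]
and taking the square root produces the stated constant $t^{-(M-2)/2}/\sqrt{M-2}$. For $M=2$ the antiderivative is $\log s$, so $\int_t^1 s^{-1}\,ds = -\log t = |\log t|$ because $t\in(0,1)$, whence the square root gives the factor $|\log t|^{1/2}$.

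I do not expect any genuine obstacle here: the argument is the classical radial lemma and works verbatim for noninteger $M$ because it uses only the one-dimensional representation \eqref{ffci-app} and the elementary primitive of $s^{1-M}$. The only minor points of care are the boundary condition $v(1)=0$, which is what makes the representation $v(t)=-\int_t^1 v'$ available, and keeping the inequality $\frac{t^{2-M}-1}{M-2}\le \frac{t^{-(M-2)}}{M-2}$ (valid since the subtracted term $1/(M-2)$ is positive) so as to obtain the clean constant rather than the sharp one.
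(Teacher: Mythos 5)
Your proof is correct and follows the same route as the paper's: the representation $v(t)=-\int_t^1 v'(s)\,ds$ from \eqref{ffci-app}, a weighted Cauchy--Schwarz with the split $s^{(1-M)/2}\cdot s^{(M-1)/2}$, and the explicit evaluation of $\int_t^1 s^{1-M}\,ds$ in the two cases $M>2$ and $M=2$. Nothing is missing.
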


\begin{proof}
	By \eqref{ffci-app} we have
	\begin{align*}
	|v(t)| & \le \int_t^1 |v'(s)| ds  \le \left(\int_t^1s^{1-M} ds\right)^{\frac{1}{2}}
	\left(\int_0^1r^{M-1}\left(v'(r)\right)^2 dr\right)^{\frac{1}{2}}
	\intertext{which gives, for $M>2$,}
	& = \frac{1}{\sqrt{M-2}} \left(\frac{1}{t^{M-2}}-1\right)^{\frac{1}{2}}
	\left(\int_0^1r^{M-1}\left(v'(r)\right)^2 dr\right)^{\frac{1}{2}}
	\le \frac{1}{\sqrt{M-2}} \frac{\left(\int_0^1r^{M-1}\left(v'(r)\right)^2 dr\right)^{\frac{1}{2}}}{t^{\frac{M-2}{2}}} ,
	\intertext{ or, when $M=2$}
	& =	\left|\log t\right|^{\frac{1}{2}}
	\left(\int_0^1r^{M-1}\left(v'(r)\right)^2 dr\right)^{\frac{1}{2}} .
	\end{align*}	
\end{proof}

\begin{lemma}[Sobolev embedding]\label{talenti}
	Let $M>2$ and $2^{\ast}_M=\frac{2M}{M-2}$. The space $H^1_M(0,+\infty)$ is continuously embedded in $L^{2^{\ast}_M}_M(0,+\infty)$ and the best constant
	\[S_M=\min\limits_{v\in H^1_M(0,+\infty)} \frac{\int_0^{+\infty} t^{M-1}\left(v'(t)\right)^2 dt}{\left(\int_0^{+\infty}t^{M-1} v^{2^{\ast}_M}(t) dt\right)^{\frac{2}{2^{\ast}_M}}} \]
	is achieved by any  Aubin-Talenti's bubble $U(t)=\left(a+b t^2\right)^{-\frac{M-2}{2}}$.
\end{lemma}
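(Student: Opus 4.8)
The plan is to treat the quotient defining $S_M$ as a purely one–dimensional weighted Sobolev problem, in which the weight $t^{M-1}$ plays the role of the Jacobian of radial integration in ``dimension $M$'' even when $M$ is not an integer. The first fact I would record is the \emph{dilation invariance}: under $v\mapsto v_\lambda(t):=\lambda^{\frac{M-2}{2}}v(\lambda t)$ one computes $\int_0^\infty t^{M-1}|v_\lambda'|^2\,dt=\int_0^\infty t^{M-1}|v'|^2\,dt$ and $\int_0^\infty t^{M-1}|v_\lambda|^{2^\ast_M}\,dt=\int_0^\infty t^{M-1}|v|^{2^\ast_M}\,dt$, the exponents matching \emph{exactly} because $2^\ast_M=\frac{2M}{M-2}$. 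This is the structural fact behind the whole statement: it forces the extremals to come in a one–parameter family (the rescalings of a single profile), and it is also the source of the loss of compactness that must be overcome.

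Second, I would establish the \emph{continuous embedding}, i.e. $S_M>0$, using only tools already available, whose proofs are unchanged on the half line. From the Radial Lemma \ref{radial-lemma} we have the pointwise bound $|v(t)|\le C\,t^{-\frac{M-2}{2}}\|v'\|_{L^2_M}$. Writing $t^{M-1}|v|^{2^\ast_M}=t^{M-1}|v|^2\,|v|^{\frac{4}{M-2}}$ and noting $\frac{4}{M-2}\cdot\frac{M-2}{2}=2$, this decay yields $\int_0^\infty t^{M-1}|v|^{2^\ast_M}\,dt\le C\,\|v'\|_{L^2_M}^{\frac{4}{M-2}}\int_0^\infty t^{M-3}|v|^2\,dt$; the last integral is controlled by $\|v'\|_{L^2_M}^2$ through the Hardy inequality of Proposition \ref{hardy}. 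Since $\frac{4}{M-2}+2=2^\ast_M$, this gives $\big(\int_0^\infty t^{M-1}|v|^{2^\ast_M}\big)^{2/2^\ast_M}\le C\,\|v'\|_{L^2_M}^2$, hence the embedding and a (non–sharp) bound $S_M>0$.

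Third, for the identification of the extremals I would derive the Euler--Lagrange equation. A normalized minimizer solves $-\big(t^{M-1}U'\big)'=t^{M-1}U^{2^\ast_M-1}$ on $(0,\infty)$, and a direct differentiation (which is algebraic in $M$ and so does not require $M\in\N$) shows that $U(t)=(a+bt^2)^{-\frac{M-2}{2}}$ satisfies $-\big(t^{M-1}U'\big)'=(M-2)M\,ab\,t^{M-1}U^{2^\ast_M-1}$, so it solves the equation exactly when $(M-2)M\,ab=1$; the two free parameters $a,b$ are precisely the dilation/normalization freedom. That these are the \emph{only} admissible positive solutions I would obtain via the Emden--Fowler change of variables $U(t)=t^{-\frac{M-2}{2}}\psi(\ln t)$, which turns the equation into the autonomous ODE $\psi''-\big(\tfrac{M-2}{2}\big)^2\psi+\psi^{2^\ast_M-1}=0$ on $\R$; a phase–plane, conserved–energy analysis selects a unique (up to translation) homoclinic soliton, which pulls back to the bubble.

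The main obstacle is \emph{attainment together with sharpness}: dilation invariance means a minimizing sequence may concentrate at $0$, escape to $+\infty$, or spread out, so no naive compactness is available. I would close this either by a concentration–compactness argument adapted to the measure $t^{M-1}\,dt$, first quotienting out the dilation group by normalizing the location of half of the mass and then excluding vanishing and dichotomy because any splitting would strictly raise the critical quotient; or, when $M$ is an integer, by simply invoking the classical radial sharp Sobolev inequality of Talenti and Aubin, whose extremals are exactly these bubbles. For arbitrary real $M>2$ the cleanest self–contained route is the optimal–transport proof of the sharp Sobolev inequality of Cordero-Erausquin, Nazaret and Villani, which produces the inequality, the value of $S_M$ and the bubble extremals simultaneously and in which the dimension enters only as a real parameter; I would present that argument, with the explicit computation of the previous paragraph serving to exhibit the optimizer.
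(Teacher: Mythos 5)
Your proposal is correct in outline, but it takes a completely different route from the paper: the paper does not prove Lemma \ref{talenti} at all, it simply cites Talenti's 1976 paper, where the sharp Sobolev inequality in $\R^n$ is reduced by Schwarz symmetrization to exactly this one-dimensional weighted variational problem, in which the dimension already appears as a real parameter and whose extremals are identified via a classical inequality of Bliss. What you buy with your reconstruction is a self-contained argument: the continuous embedding via the Radial Lemma plus Hardy (the exponent bookkeeping $\frac{4}{M-2}\cdot\frac{M-2}{2}=2$ and $\frac{4}{M-2}+2=2^{\ast}_M$ is correct and this step is genuinely complete and elementary), the verification that $-\left(t^{M-1}U'\right)'=M(M-2)ab\,t^{M-1}U^{2^{\ast}_M-1}$ for the bubble, and the Emden--Fowler reduction to an autonomous ODE whose homoclinic orbit is unique up to translation. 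Two caveats. First, the attainment/sharpness step is only sketched; the concentration--compactness argument modulo the dilation group is the real work and you do not carry it out. Second, your fallback to Cordero-Erausquin--Nazaret--Villani is not quite accurate as stated: their mass-transport proof lives in $\R^n$ with $n$ an integer, and to cover arbitrary real $M>2$ one needs the later weighted (monomial-weight) versions of that argument, or else Talenti's/Bliss's original one-dimensional treatment -- which is precisely what the paper invokes. A small further point worth noting (an imprecision of the statement rather than of your proof): for $2<M\le 4$ the bubble $U(t)=(a+bt^2)^{-\frac{M-2}{2}}$ fails to lie in $L^2_M(0,+\infty)$, so the minimization should really be posed on the homogeneous space $\{v:\int_0^{\infty}t^{M-1}|v'|^2\,dt<\infty\}$ rather than on $H^1_M(0,+\infty)$ as defined in the appendix; your Euler--Lagrange and uniqueness analysis is unaffected by this.
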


The just stated Sobolev embedding has been established  by  Aubin \cite{Aubin} and Talenti \cite{talenti}.
If $M$ is an integer, the embedding of $H^1_M= H^1_{\rad}(B)$ into $L^q(B)$ is compact for every $\mathcal Q<2^{\ast}_M$ for $M>2$ and for any $\mathcal Q$ if $M=2$. The same arguments can be repeated for any $M$ to obtain

\begin{lemma}[Compact Sobolev embedding]\label{sobolev}
	Let $M> 2$ and $2^{\ast}_M=\frac{2M}{M-2}$. The space $H^1_{0,M}$ is continuously embedded in $L^{2^{\ast}_M}_M$ and compactly embedded in $L^{q}_{M}$ for any $\mathcal Q<2^{\ast}_M$.
	Otherwise if $M=2$ then  $H^1_{0,M}$ is compactly embedded in $L^{q}_{M}$ for any $\mathcal Q$.
\end{lemma}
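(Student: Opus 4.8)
The plan is to prove the continuous embedding and the compact embedding separately, isolating at each stage the behaviour near the origin $r=0$, which is the only point where the weight $r^{M-1}$ degenerates and where compactness can fail.

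For the continuous embedding when $M>2$, I would deduce it directly from Talenti's inequality (Lemma \ref{talenti}) by extension by zero. Given $v\in H^1_{0,M}$, since $v(1)=0$ the function defined by $\tilde v=v$ on $(0,1)$ and $\tilde v\equiv 0$ on $(1,+\infty)$ belongs to $H^1_M(0,+\infty)$, with $\tilde v'=v'$ a.e.~on $(0,1)$ and $\tilde v'\equiv 0$ on $(1,+\infty)$. Applying Lemma \ref{talenti} to $\tilde v$ gives
\[
\left(\int_0^1 r^{M-1}|v|^{2^{\ast}_M}\,dr\right)^{2/2^{\ast}_M}\le \frac{1}{S_M}\int_0^1 r^{M-1}|v'|^2\,dr ,
\]
which is exactly the continuous embedding $H^1_{0,M}\hookrightarrow L^{2^{\ast}_M}_M$.

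For the compact embedding I would start from a bounded sequence $\{v_n\}\subset H^1_{0,M}$, say $\|v_n\|_M\le C$, and pass (up to a subsequence) to a weak limit $v_n\rightharpoonup v$ in $H^1_{0,M}$, which is legitimate since this is a Hilbert space. Splitting $(0,1)=(0,\delta)\cup[\delta,1]$, on the inner region $[\delta,1]$ the weight $r^{M-1}$ is bounded above and below by positive constants, so the restriction of $\{v_n\}$ is bounded in the ordinary $H^1(\delta,1)$ and the one-dimensional Rellich theorem yields $v_n\to v$ strongly in $L^q(\delta,1)$, hence in $L^q_M(\delta,1)$. It then remains only to make the tail near the origin uniformly small, and this is where the exponent restriction enters. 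Here I would invoke the Radial Lemma \ref{radial-lemma}: for $M>2$ it gives the pointwise bound $|v_n(r)|\le C\,\|v_n'\|_{L^2_M}\,r^{-(M-2)/2}\le C'\,r^{-(M-2)/2}$, whence
\[
\int_0^\delta r^{M-1}|v_n|^q\,dr\le C\int_0^\delta r^{\,M-1-q(M-2)/2}\,dr ,
\]
and this integral is finite and tends to $0$ as $\delta\to 0$ precisely when $M-1-q(M-2)/2>-1$, that is when $q<\tfrac{2M}{M-2}=2^{\ast}_M$, the bound being uniform in $n$. For $M=2$ the Radial Lemma gives instead $|v_n(r)|\le C\,|\log r|^{1/2}$, and since $\int_0^\delta r\,|\log r|^{q/2}\,dr$ is finite for every $q$ and vanishes as $\delta\to 0$, the tail is uniformly small for all $q$. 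Combining the uniform smallness of the tail with the strong convergence on $[\delta,1]$ through a standard $\varepsilon$--$\delta$ argument then gives $v_n\to v$ strongly in $L^q_M(0,1)$.

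The main obstacle is exactly the singular behaviour at $r=0$: away from the origin the statement is classical one-dimensional Rellich, but at $r=0$ the degeneracy of the weight could destroy compactness at the critical exponent. The Radial Lemma is the decisive tool, since it supplies the precise decay of functions in $H^1_{0,M}$ from the gradient bound alone, and the threshold $2^{\ast}_M$ arises naturally as the integrability limit of $r^{M-1}\cdot r^{-q(M-2)/2}$; everything else is the usual Rellich argument transplanted to the weighted setting.
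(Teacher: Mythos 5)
Your proof is correct and follows essentially the same strategy as the paper's: continuity from Talenti's inequality via extension by zero, and compactness by splitting $(0,1)$ at a small $\delta$, using the Radial Lemma to make the tail at the origin uniformly small (which is exactly where the restriction $q<2^{\ast}_M$ enters) and a classical one-dimensional compactness argument on $[\delta,1]$. The only cosmetic difference is that the paper obtains the interior convergence through a H\"older estimate and Ascoli--Arzel\`a in $C[\varepsilon,1]$ (identifying the limit by Fatou) rather than your reduction to the unweighted $H^1(\delta,1)$ and Rellich with a weak limit in the Hilbert space, but the decomposition and the decisive use of the Radial Lemma are identical.
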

\begin{proof}
	By Lemma \ref{talenti}	$H^1_{0,M}$ is continuously embedded in $L^{q}_M$ for any $\mathcal Q\le 2^{\ast}_M$ if $M>2$.
	Moreover if $M=2$ Lemma \ref{radial-lemma} implies that $H^1_{0,M}$ is continuously embedded in any $L^q_M$  since  		\[\int_0^1 t |v|^q dt \le \|v\|_M^q \int_0^1 t |\log t|^{\frac{q}{2}} dt < \infty .\]
		Besides it is easy to see that for any  $M\ge 2$ $H^1_{M}$ is compactly embedded in $C[\e,1]$ for any $\e\in(0,1)$. Indeed by \eqref{ffci-app}
if $r_1,r_2>\e$ we have
\begin{align*}
\left|v(r_1)-v(r_2)\right| \le \int_{r_1}^{r_2} |v'(t)| dt \underset{\text{Holder}}{\le} 
\left(\int_{r_1}^{r_2} t^{1-M} dt\right)^{\frac{1}{2}} \|v\|_M \le \e^{\frac {1-M}2} \|v\|_M \sqrt{|r_1-r_2|}
\end{align*}
and the claim follows by the Ascoli Theorem.
Now, let $v_n$ be a bounded sequence in $H^1_{M}$: up to a subsequence it converges  locally uniformly to a function $v$, and Fatou's Lemma ensures that $v\in L^{2^{\ast}_M}_M$, or $v\in L^q_M$ for any $\mathcal Q$ if $M=2$. It is left to check that $v_n\to v$ in $L^q_M$ for $\mathcal Q<{2^{\ast}_M}$, or for any $\mathcal Q$ if $M=2$. 
We take first $M>2$, and $\mathcal Q<{2^{\ast}_M}$. For any fixed $\e\in(0,1)$ we compute
\begin{align*}
\int_0^1 t^{M-1} |v_n-v|^q dt  = &\int_0^{\e} t^{M-1} |v_n-v|^q dt + \int_{\e}^1 t^{M-1} |v_n-v|^q dt 
\intertext{and using \eqref{radial-lemma-estimate} to control the first integral and the fact that $v_n\to v$ in $C[\e,1]$ for the second one gives}
\le & C \int_0^{\e} t^{M-1-\frac{q(M-2)}{2}} dt +  \frac{1-\e^M}{M} \left(\sup\limits_{[\e,1]}|v_n-v|\right)^q
\\ = &  C \e^{M-\frac{q(M-2)}{2}}  +  \frac{1-\e^M}{M} \left(\sup\limits_{[\e,1]}|v_n-v|\right)^q.
\end{align*}
	Therefore, choosing first $\e$ and then $n$, the quantity $\int_0^1 t^{M-1} |v_n-v|^q dt $ can be made arbitrarily small.
	In a similar way, if $M=2$ and $\mathcal Q>1$ we have
\begin{align*}
\int_0^1 t |v_n-v|^q dt  = &\int_0^{\e} t |v_n-v|^q dt + \int_{\e}^1 t |v_n-v|^q dt \\
\le & C \int_0^{\e} t |\log t|^{\frac{q}{2}} dt +  \frac{1-\e^2}{2} \left(\sup\limits_{[\e,1]}|v_n-v|\right)^q
\end{align*}	
and the conclusion follows.	
\end{proof}

\begin{lemma}[Hardy inequality]\label{hardy}
	If $M>2$, then
	\[ \left(\frac{M-2}{2}\right)^2 \int_0^1 t^{M-3} v^2 dt \le  \int_0^1 t^{M-1} |v'|^2 dt\]
	for any $v\in H^1_{0,M}$.
\end{lemma}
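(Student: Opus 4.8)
The plan is to prove the weighted Hardy inequality by the classical integration-by-parts trick, carried out on a truncated interval $[\e,1]$ so as to sidestep any a priori knowledge of the behaviour of $v$ at the origin, and then to pass to the limit $\e\to 0$.

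First I would fix $v\in H^1_{0,M}$ and $0<\e<1$. On the compact interval $[\e,1]$ the weight $t^{M-1}$ is bounded away from $0$ and $\infty$, so $v$ is an ordinary $H^1(\e,1)$ function, hence absolutely continuous, and $v^2$ is absolutely continuous with $(v^2)'=2vv'$. Writing $t^{M-3}=\frac{1}{M-2}(t^{M-2})'$ (here $M>2$ is used) and integrating by parts gives
\[ \int_\e^1 t^{M-3} v^2\, dt = \frac{1}{M-2}\Big[t^{M-2}v^2\Big]_\e^1 - \frac{2}{M-2}\int_\e^1 t^{M-2} v v'\, dt. \]
Since $v(1)=0$ by definition of $H^1_{0,M}$, the boundary term at $t=1$ vanishes, while the term at $t=\e$ equals $-\frac{1}{M-2}\e^{M-2}v(\e)^2\le 0$. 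This is the decisive observation: the boundary contribution at the origin carries the favourable sign and can simply be discarded.

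Next I would estimate the remaining integral by Cauchy--Schwarz, splitting $t^{M-2}=t^{(M-3)/2}\,t^{(M-1)/2}$, to obtain
\[ \int_\e^1 t^{M-3} v^2\, dt \le \frac{2}{M-2}\left(\int_\e^1 t^{M-3} v^2\, dt\right)^{1/2}\left(\int_\e^1 t^{M-1}|v'|^2\, dt\right)^{1/2}. \]
Denoting the left-hand side by $A_\e$ and bounding the last factor by $B:=\int_0^1 t^{M-1}|v'|^2\, dt<\infty$, this reads $A_\e\le \frac{2}{M-2}A_\e^{1/2}B^{1/2}$. If $A_\e>0$ I divide through by $A_\e^{1/2}$ and square, yielding $\big(\frac{M-2}{2}\big)^2 A_\e\le B$ (the inequality being trivial when $A_\e=0$). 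Finally, since $t^{M-3}v^2\ge 0$, monotone convergence gives $A_\e\uparrow \int_0^1 t^{M-3}v^2\, dt$ as $\e\to 0$, and the stated inequality follows.

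I do not expect a genuine obstacle here; the only point demanding care is the boundary term at the origin, which for a general $v\in H^1_{0,M}$ need not decay (the Radial Lemma \ref{radial-lemma} only yields $t^{M-2}v^2=O(1)$). The truncation argument resolves this cleanly precisely because the offending term has the right sign; this is why passing to the limit at the very end, rather than integrating by parts directly on $(0,1)$, is the natural order of operations.
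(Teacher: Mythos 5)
Your proof is correct, and it takes a genuinely different route from the paper's. The paper substitutes $u(t)=t^{\frac{M-2}{2}}v(t)$ and rewrites the difference of the two sides as $-\int_0^1 t|u'|^2\,dt+\frac{M-2}{2}\int_0^1 (u^2)'\,dt$; the first term is manifestly nonpositive and the second reduces to $-\frac{M-2}{2}\liminf_{t\to 0}u^2(t)\le 0$, using $u(1)=0$ and the Radial Lemma to control $u$ near the origin. You instead truncate to $[\varepsilon,1]$, integrate by parts against $t^{M-3}=\frac{1}{M-2}(t^{M-2})'$, discard the favourably signed boundary term at $\varepsilon$, absorb via Cauchy--Schwarz, and pass to the limit by monotone convergence. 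Your argument is the more elementary and self-contained of the two: it needs no information whatsoever about the behaviour of $v$ at the origin (indeed you correctly note that the Radial Lemma only gives $t^{M-2}v^2=O(1)$ there, which would not make a direct integration by parts on $(0,1)$ conclusive), and the finiteness of $\int_0^1 t^{M-3}v^2\,dt$ comes out of the limit rather than being assumed. What the paper's substitution buys in exchange is an exact identity: the deficit in the inequality is identified as $\int_0^1 t|u'|^2\,dt$ plus a boundary contribution, which makes the non-attainment of the constant transparent (equality would force $u$ constant, hence $v\equiv 0$ since $v(1)=0$), consistent with the remark after \eqref{first-singular} that the corresponding infimum is not achieved. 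Both proofs are complete; yours requires no auxiliary lemma.
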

\begin{proof}
	
For any $v\in H^1_{0,M}$ the function $u(t)= t^{\frac{M-2}{2}}v(t)$ is continuous on $(0,1]$ with $u(1)=0$, differentiable a.e. and bounded by the Radial Lemma \ref{radial-lemma}, since $M>2$.
Hence 
	\begin{align*}
	\left(\frac{M-2}{2}\right)^2 \int_0^1 t^{M-3} v^2 dt - \int_0^1 t^{M-1} |v'|^2 dt  =
          - \int_0^1 t |u'|^2 dt +  \frac{M-2}{2}\int_0^1 (u^2)' dt\\
          \le - \int_0^1 t |u'|^2 dt -\frac{M-2}2 \liminf\limits_{t\to 0} u^2(t) \le 0 .
	\end{align*} 
\end{proof}

\end{document}